\newtheorem{thm}{Theorem}[section]
\newtheorem{lem}[thm]{Lemma}
\newtheorem{defi}[thm]{Definition}
\newtheorem{prop}[thm]{Proposition}
\newtheorem{rk}[thm]{Remark}
\renewcommand{\theequation}{\arabic{section}.\arabic{equation}}
\newenvironment{preuve}{\vip\noindent {\it Proof}}{\hfill$\square$\vip}
\newcommand{\ala}{\nonumber \\}
\newcommand{\rr}{{\mathbb{R}}}
\newcommand{\nn}{{\mathbb{N}}}
\newcommand{\e}{\epsilon}
\newcommand{\vip}{\vskip.2cm}
\newcommand{\indiq}{\hbox{\rm 1}{\hskip -2.8 pt}\hbox{\rm I}}
\newcommand{\E}{\mathbb{E}}
\newcommand{\PR}{\mathbb{P}}
\newcommand{\bZ}{\mathbf{Z}}
\newcommand{\intot}{\int _0^t }
\newcommand{\intrd}{\int_{\rr^2}}
\newcommand{\sm}{{s-}}
\newcommand{\cL}{{\mathcal{L}}}
\newcommand{\cW}{{\mathcal{W}}}
\newcommand{\cV}{{\mathcal{V}}}
\newcommand{\cF}{{\mathcal{F}}}
\newcommand{\cE}{{\mathcal{E}}}
\newcommand{\cH}{{\mathcal{H}}}
\newcommand{\cN}{{\mathcal{N}}}
\newcommand{\cG}{{\mathcal{G}}}
\newcommand{\bIz}{{\mathbf{I}_\zeta}}
\newcommand{\lc}{\left<}
\newcommand{\rc}{\right>}
\newcommand{\tw}{{\tilde w}}
\newcommand{\tX}{{\tilde X}}
\begin{document}

\title[Regularization properties of the Boltzmann equation]
{Regularization properties of the 2D homogeneous Boltzmann
equation without cutoff}

\author{Vlad Bally}

\author{Nicolas Fournier}

\address{V. Bally: LAMA UMR 8050,
Universit\'e Paris Est, Cit\'e Descartes, 5 boulevard Descartes,
Champs sur Marne, 77454 Marne la Vall\'e Cedex, France, vlad.bally@univ-mlv.fr} 

\address{N. Fournier: LAMA UMR 8050, Universit\'e Paris Est,
Facult\'e de Sciences et Technologies,
61, avenue du G\'en\'eral de Gaulle, 94010 Cr\'eteil 
Cedex, France, nicolas.fournier@univ-paris12.fr}

\subjclass[2000]{60H07,82C40}

\keywords{Kinetic equations, Hard potentials without cutoff, 
Malliavin calculus, Jump processes.}

\thanks{Le second auteur de ce travail a b\'en\'efici\'e d'une aide
de l'Agence Nationale de la Recherche portant la r\'ef\'erence 
ANR-08-BLAN-0220-01.
}

\begin{abstract}
We consider the $2$-dimensional 
spatially homogeneous Boltzmann equation for hard potentials.
We assume that the initial condition is a probability measure that
has some exponential moments and is not a Dirac mass.
We prove some regularization properties: for a class of very hard
potentials,
the solution instantaneously belongs to $H^r$,
for some $r\in (-1,2)$ depending on the parameters of the equation.
Our proof relies on the use of a well-suited Malliavin calculus for
jump processes.
\end{abstract}

\maketitle

\section{Introduction}
\setcounter{equation}{0}

\subsection*{The Boltzmann equation}
We consider a spatially homogeneous gas in dimension $2$ 
modeled by the Boltzmann equation.
The density $f_t(v)$ of
particles with velocity $v\in \rr^2$ at time $t\geq 0$ solves
\begin{eqnarray} \label{be}
\partial_t f_t(v) = \intrd dv_* \int_{-\pi/2}^{\pi/2} d\theta B(|v-v_*|,\theta)
\big[f_t(v')f_t(v'_*) -f_t(v)f_t(v_*)\big],
\end{eqnarray}
where 
\begin{equation*}
v'=\frac{v+v_*}{2} + R_\theta \left(\frac{v-v_*}{2}\right), \quad 
v'_*=\frac{v+v_*}{2} -R_\theta \left(\frac{v-v_*}{2}\right)
\end{equation*}
and where $R_\theta$ is the rotation of angle $\theta$. One usually integrates 
$\theta$ on $(-\pi,\pi)$, but a famous trick allows one to restrict 
the integration to $[-\pi/2,\pi/2]$ without loss of generality, see e.g.
the argument in the introduction of \cite{advw}.
The cross section $B(|v-v_*|,\theta)\geq 0$
is given by physics and depends on the type of interaction
between particles. We refer to the review paper of 
Villani \cite{v:h} for more details.
Conservation of mass, momentum and kinetic energy
hold for reasonable solutions to (\ref{be}):
\begin{equation*}
\forall\; t\geq 0,\quad\intrd  f_t(v) \, \psi(v) \, dv 
= \intrd f_0(v) \, \psi(v) \, dv, \qquad \psi = 1, v, |v|^2
\end{equation*}
and we classically may assume without loss of generality that 
$\int_{\rr^2} f_0(v) \, dv=1$ and $\intrd v f_0(dv)=0$.

\subsection*{Assumptions}
We shall assume here that for some $\gamma\in (0,1)$, $\nu\in(0,1/2)$,
some even function $b: [-\pi/2,\pi/2]\backslash \{0\} \mapsto \rr_+$, 
\renewcommand\theequation{{\bf A}($\gamma,\nu$)}
\begin{equation}
\left\{
\begin{array}{l}
B(|v-v_*|,\theta) = |v-v_*|^\gamma b(\theta), \\ 
\exists \; 0<c<C,\quad \forall \; \theta\in (0,\pi/2], 
\quad  c \theta^{-1-\nu}\leq b(\theta)\leq C \theta ^{-1-\nu},\\ 
\forall\; k\geq 1, \quad \exists \; C_k,\quad 
\forall \; \theta \in (0,\pi/2], \quad 
|b^{(k)}(\theta)| \leq C_k \theta^{-1-\nu-k}.
\end{array}
\right.
\end{equation}
This assumption is made by analogy to the case where particles 
collide by pairs due to a repulsive force 
proportional to $1/r^s$ for some $s> 2$ in dimension $3$, for which
$\gamma=(s-5)/(s-1)$ and $b(\theta)\simeq |\theta|^{-1-\nu}$, 
with $\nu=2/(s-1)$.
We aim to study here hard potentials
($s>5$), for which $\gamma\in(0,1)$ and $\nu\in (0,1/2)$.

\subsection*{Weak solutions}
For $\theta\in (-\pi/2,\pi/2)$, we introduce
\begin{equation*}
A(\theta)=\frac 1 2 (R_\theta - I)=
\frac 1 2 \begin{pmatrix} \cos\theta-1 & -\sin \theta \\
\sin\theta & \cos\theta -1 \end{pmatrix}.
\end{equation*}
Note that $v'=v+ A(\theta)(v-v_*)$ and that for $X\in \rr^2$,
\renewcommand\theequation{\thesection.\arabic{equation}}
\addtocounter{equation}{-1}
\begin{equation}\label{tropcool}
|A(\theta)X|^2=\frac 1 2 (1-\cos\theta) |X|^2 \leq \frac {\theta^2} 4 |X|^2.
\end{equation}

\begin{defi}\label{dfws}
Assume ({\bf A}$(\gamma,\nu))$ for some $\nu\in(0,1)$ and $\gamma \in (0,1]$.
A family $(f_t)_{t\in[0,T]}$ of probability measures on $\rr^2$
is said to be a weak solution of (\ref{be}) if for all $t\in [0,T]$,
\renewcommand\theequation{\thesection.\arabic{equation}}
\begin{equation}\label{energy}
\intrd v f_t(dv)= \intrd v f_0(dv) \quad \hbox{and} \quad
\intrd |v|^2 f_t(dv)= \intrd |v|^2 f_0(dv)<\infty
\end{equation}
and if for any $\psi:\rr^2\mapsto \rr$ globally Lipschitz continuous
and any $t\in [0,T]$,
\begin{equation}\label{wbe}
\frac{d}{dt}\intrd \!\! \psi(v)\, f_t(dv) = 
\intrd \!\! f_t(dv) \intrd \!\! f_t(dv_*) \int_{-\pi/2}^{\pi/2} \!\! 
b(\theta) d\theta
|v-v_*|^\gamma
\left[\psi(v+A(\theta)(v-v_*))-\psi(v) \right].
\end{equation}
\end{defi}
The right hand side of (\ref{wbe}) is well-defined due to (\ref{energy}),
(\ref{tropcool}) and because 
$\int_{-\pi/2}^{\pi/2} |\theta|b(\theta)d\theta<\infty$ thanks 
to ({\bf A}$(\gamma,\nu)$) with $\nu\in (0,1)$.
As shown in \cite[Corollary 2.3 and Lemma 4.1]{fm},
we have the following result.

\begin{thm}
Assume ({\bf A}$(\gamma,\nu)$)
for some $\nu\in(0,1)$ and $\gamma \in (0,1]$. Assume also that 
$b(\theta)=\tilde b(\cos \theta)$, for some nondecreasing convex $C^1$
function $\tilde b$ on $[0,1)$. Let $f_0$ be a probability measure on $\rr^2$
such that for some $\delta\in(\gamma,2)$,
$\intrd e^{|v|^\delta}f_0(dv)<\infty$.
There exists a unique weak solution $(f_t)_{t\geq 0}$ to (\ref{be})
starting from $f_0$.
Furthermore, for all $\kappa \in (0,\delta)$,
$\sup_{t\geq 0} \intrd e^{|v|^{\kappa}}f_t(dv) <\infty$.
\end{thm}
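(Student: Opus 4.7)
The plan is to follow the classical scheme from \cite{fm}: construct a solution by cutoff approximation, propagate the required moments uniformly along the approximation, and prove uniqueness via a probabilistic coupling. I sketch how I would re-derive each ingredient.

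For existence, truncate the kernel by $b_n(\theta)=b(\theta)\mathbf{1}_{|\theta|\geq 1/n}$. The corresponding Boltzmann equation has a bounded angular kernel and admits a probability-measure valued weak solution $f^n$ (for instance as the time-marginals of a pure-jump Markov process), with mass, momentum and energy conserved. The uniform second-moment bound yields tightness of $(f^n)$ on $C([0,T],\mathcal{P}_2(\rr^2))$. Passing to the limit in (\ref{wbe}) is justified because, for any Lipschitz $\psi$, estimate (\ref{tropcool}) gives
\[
|\psi(v+A(\theta)(v-v_*))-\psi(v)| \leq \tfrac{1}{2}\|\psi'\|_\infty |\theta|\,|v-v_*|,
\]
and $\int|\theta|b(\theta)d\theta<\infty$ under $(\textbf{A}(\gamma,\nu))$ with $\nu<1$ produces a uniform integrable majorant.

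The central a priori estimate is the propagation of exponential moments. I apply the weak formulation to a Lipschitz truncation of $\psi_\kappa(v)=e^{|v|^\kappa}$ for $\gamma<\kappa<\delta$ (smaller $\kappa$ then follow by Jensen). After symmetrizing in $(v,v_*)$, one invokes a Povzner-type inequality of the shape
\[
\psi_\kappa(v')+\psi_\kappa(v'_*)-\psi_\kappa(v)-\psi_\kappa(v_*) \leq C\,\theta^2\bigl[\psi_\kappa(v)+\psi_\kappa(v_*)\bigr] - c\,\theta^2 |v-v_*|^\kappa\bigl[\psi_\kappa(v)+\psi_\kappa(v_*)\bigr],
\]
whose negative part (this is where the convexity of $\tilde b$ is used) dominates the positive one on large $|v-v_*|$. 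Integrating against $b(\theta)d\theta\,f^n_t(dv)f^n_t(dv_*)$, with $\int\theta^2 b(\theta)d\theta<\infty$, produces a differential inequality for $m_\kappa^n(t)=\int e^{|v|^\kappa}f^n_t(dv)$; Gronwall, together with the conserved energy used to control the positive term, yields $\sup_n\sup_{[0,T]} m_\kappa^n(t)<\infty$, and the bound passes to $f$ by Fatou.

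For uniqueness, I would build stochastic processes $V_t,\tilde V_t$ with marginals $f_t,g_t$ driven by a common Poisson random measure, and for each simultaneous jump match the post-collision angles so as to minimize $|V_t-\tilde V_t|^2$, in the spirit of Tanaka's coupling (again the convexity of $\tilde b$ is what makes the matching admissible). A Gronwall estimate on $\E|V_t-\tilde V_t|^2$, together with $W_2^2(f_t,g_t)\leq\E|V_t-\tilde V_t|^2$, gives $W_2(f_t,g_t)\leq e^{CT}W_2(f_0,g_0)=0$. The main obstacle is the Povzner step: the positive and negative contributions in $\psi_\kappa(v')+\psi_\kappa(v'_*)-\psi_\kappa(v)-\psi_\kappa(v_*)$ nearly cancel, and a delicate pointwise analysis leveraging the convexity of $\tilde b$ is required to keep the scheme viable with $\kappa$ arbitrarily close to $\delta<2$, where $e^{|v|^\kappa}$ is only mildly subadditive.
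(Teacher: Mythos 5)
The paper does not prove this theorem: it is quoted verbatim from Fournier--Mouhot \cite{fm} (Corollary 2.3 for well-posedness, Lemma 4.1 for the exponential moments), so there is no ``paper's own proof'' to compare to, only the general framework of \cite{fm}. Your outline --- cutoff approximation for existence, a Povzner-type estimate for moment propagation, and a Tanaka coupling for uniqueness --- is the right family of tools, but the uniqueness step as you wrote it has a genuine gap, and it is the crux of \cite{fm}, not the Povzner step as you suggest.

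The problem is that the jump rate $|v-v_*|^\gamma$ is only $\gamma$-H\"older near the diagonal, not Lipschitz. In the coupled construction, the two processes $V_t$ and $\tilde V_t$ jump at rates $|V_t-v|^\gamma$ and $|\tilde V_t-v|^\gamma$; the ``asymmetric'' contribution (where one jumps and the other does not) to $\frac{d}{dt}\,\E|V_t-\tilde V_t|^2$ involves $\bigl||V_t-v|^\gamma-|\tilde V_t-v|^\gamma\bigr|$ against a factor of order $|V_t-v|^2$. After H\"older this produces a bound of the form $\frac{d}{dt}y(t)\leq C\,y(t)^{\gamma/2}$ with $\gamma/2<1$, which does \emph{not} propagate $y(0)=0$: the ODE $y'=y^{\gamma/2}$ with $y(0)=0$ has nontrivial solutions. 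So ``a Gronwall estimate on $\E|V_t-\tilde V_t|^2$ gives $W_2(f_t,g_t)\leq e^{CT}W_2(f_0,g_0)=0$'' is false as stated. Getting around exactly this obstruction is the main content of \cite{fm}: they need a much more careful coupling argument in which the singularity of $x\mapsto x^\gamma$ at the origin is handled by a cutoff in the relative velocity, compensated by the exponential moment bounds, before any Gronwall is applicable. The convexity of $\tilde b$ enters in the optimal coupling of the post-collisional angles, not (as you guess) in the Povzner inequality. Your Povzner-type inequality for $\psi_\kappa(v)=e^{|v|^\kappa}$ is also stated with more optimistic cancellation than is actually available, but that part is at least standard in spirit; the uniqueness step is the one that would fail outright.
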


The additional condition that $\tilde b$ is nondecreasing and convex
is made for convenience, and typically holds if 
$b(\theta)\simeq |\theta|^{-1-\nu}$.

\subsection*{Sobolev spaces}
For $f$ a probability measure on $\rr^2$, we set,
for $\xi\in \rr^2$, $\widehat{f}(\xi)=\intrd e^{i \lc \xi,x\rc}f(dx)$.
Recall that for $r\in \rr$,
$$
H^r(\rr^2) = \left\{ f, \; ||f||_{H^r(\rr^2)}<\infty \right\},
\quad \hbox{where} \quad
||f||_{H^r(\rr^2)}^2 = \intrd (1+|\xi|^2)^{r} |\widehat f(\xi)|^2 d\xi.
$$
Let us recall the following classical results. For $f$ a probability
measure on $\rr^2$,

\noindent $\bullet$ $f\in H^{r}(\rr^2)$ for every $r<-1$;

\noindent $\bullet$ if 
$f\in H^r(\rr^2)$ for some $r\geq 0$, then $f$ 
has a density that belongs to $L^2(\rr^2)$;

\noindent $\bullet$ if 
$f\in H^r(\rr^2)$ for some $r>1$, then $f$ has a bounded and continuous 
density.

\subsection*{Main result}

We need to introduce, for 
$\nu\in(0,1/2)$ and $\gamma \in (0,1)$ satisfying 
$\gamma>\nu^2/(1-2\nu)$,
\begin{align}
&a_{\gamma,\nu}=\frac12 \left[\sqrt{(\gamma+\nu+1)^2+
4\left(\frac{\gamma(1-2\nu)}{\nu}-\nu \right)}
- (\gamma+\nu+1) \right]>0,\label{agm} \\
&q_{\gamma,\nu}=\left\{\begin{array}{lll}
a_{\gamma,\nu} &\hbox{ if }&  a_{\gamma,\nu} \leq 2,\\
\displaystyle 
\frac{(2+\gamma)(1-2\nu)-\nu^2}{(1+\gamma+\nu)\nu+1}
&\hbox{ if }&  a_{\gamma,\nu} >2.
\end{array}\right.\label{qgm}
\end{align}
As we will see in Lemma \ref{finalproofstep3}, $q_{\gamma,\nu}>2$
in the latter case.

\begin{thm}\label{main}
Assume ({\bf A}$(\gamma,\nu)$), for some $\gamma \in (0,1)$, $\nu\in(0,1/2)$,
such that $\gamma > \nu^2/(1-2\nu)$.
Consider a weak solution $(f_t)_{t\in [0,T]}$
to (\ref{be}) such that $f_0$ is not a Dirac mass and,
for some $\delta\in (\gamma\lor\nu,1)$,
\begin{equation}\label{expo}
\sup_{t\in[0,T]} \intrd e^{|v|^{\delta}}f_t(dv) <\infty.
\end{equation}

(i) 
For all $t_0\in(0,T]$,
\begin{align*}
&\forall \; q\in (0,q_{\gamma,\nu}), \quad \forall \; \xi \in \rr^2, \quad
\sup_{[t_0,T]}|\widehat{f_t}(\xi)| \leq C_{t_0,T,q} (1+|\xi|)^{-q},\\
&\forall \; r <q_{\gamma,\nu}-1, \quad 
\sup_{[t_0,T]} ||f_t||_{H^{r}(\rr^2)}<\infty, \\
&\forall \; q\in (0,q_{\gamma,\nu}), \quad \forall \; v_0 \in \rr^2, 
\quad \forall \; \e>0, \quad \sup_{[t_0,T]}f_t(Ball(v_0,\e)) \leq  
C_{t_0,T,q} \e^q.\\
\end{align*}

(ii) If $\nu\in(0,1/3)$ and $\gamma > (2\nu+2\nu^2)/(1-3\nu)$, then
$q_{\gamma,\nu}>1$. Thus $f_t$ has a density belonging to $L^2(\rr^2)$ for all
$t\in (0,T]$.

(iii) If finally 
$\nu\in(0,1/4)$ and $\gamma > (6\nu+3\nu^2)/(1-4\nu)$,
then $q_{\gamma,\nu}>2$. Thus  
$f_t$ has a continuous and bounded density for all $t\in(0,T]$.
\end{thm}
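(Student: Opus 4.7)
The plan is to reduce parts (i)--(iii) to establishing the Fourier decay estimate, then to prove this decay via Malliavin calculus applied to a jump-SDE representation of $f_t$. Once
$$\sup_{t\in[t_0,T]} |\widehat{f_t}(\xi)| \leq C_{t_0,T,q}(1+|\xi|)^{-q}$$
is known for every $q < q_{\gamma,\nu}$, the $H^r$ bound for $r < q_{\gamma,\nu}-1$ is immediate from Plancherel, since $\|f_t\|_{H^r}^2 = \intrd (1+|\xi|^2)^r |\widehat{f_t}(\xi)|^2 d\xi$ converges as long as $2(r-q) < -2$. The ball estimate follows by convolving $f_t$ with a smooth mollifier at scale $\e$, controlling the resulting density via Fourier inversion, and then integrating over the ball. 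Parts (ii) and (iii) are purely algebraic verifications that the stated hypotheses on $(\gamma,\nu)$ force $q_{\gamma,\nu}>1$ and $q_{\gamma,\nu}>2$, respectively, which should amount to Lemma \ref{finalproofstep3}.

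For the Fourier decay itself, I would represent $f_t$ as the marginal law of a Poisson-driven jump process $(V_t)$, schematically
$$V_t = V_0 + \int_0^t\!\!\intrd\!\!\int_{-\pi/2}^{\pi/2} A(\theta)(V_{\sm}-v_*)\,N(ds,dv_*,d\theta),$$
where $N$ is a Poisson random measure with intensity $ds\,f_s(dv_*)\,|V_{\sm}-v_*|^\gamma\,b(\theta)\,d\theta$; by uniqueness of weak solutions to \eqref{wbe}, $\cL(V_t)=f_t$. Because $\int b(\theta)d\theta = \infty$, the process has infinite activity, so infinitely many small jumps accumulate in any time interval, each supplying a perturbation of order $\theta$ in the direction $V_{\sm}-v_*$ by \eqref{tropcool}. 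Writing $\widehat{f_t}(\xi) = \E[e^{i\lc\xi,V_t\rc}]$ and performing a Malliavin-type integration by parts by perturbing the angles $\theta$ of these jumps would produce, after iteration up to order $q$,
$$|\widehat{f_t}(\xi)| \leq C|\xi|^{-q}\, \E\bigl[(\det \cH_t)^{-p}\bigr]^{1/2}\cdot(\text{moment weights}),$$
for a Malliavin covariance matrix $\cH_t$ built from the relative velocities at jump times. Non-degeneracy of $\cH_t$ will use the assumption that $f_0$ is not a Dirac mass, which guarantees that relative velocities $V_{\sm}-v_*$ genuinely span $\rr^2$ with positive probability.

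The main obstacle, which determines the precise exponent $q_{\gamma,\nu}$, is obtaining sharp inverse moments of $\cH_t$. The power counting is delicate because three competing effects must be balanced: small-$\theta$ jumps supply the smoothing but have singular intensity $b(\theta)\sim\theta^{-1-\nu}$ and contribute only $\theta^2|V-v_*|^2$ in norm to the covariance; the relative-velocity factor $|V-v_*|^\gamma$ in the rate must be bounded below, which is where the exponential moment bound \eqref{expo} with $\delta > \gamma\lor\nu$ enters (to exclude concentration of relative velocities near zero); and each order of integration by parts pays a new inverse power of $\cH_t$ that has to be reabsorbed. Optimizing the resulting power-counting inequality produces a quadratic equation whose positive root is $a_{\gamma,\nu}$, giving $q_{\gamma,\nu} = a_{\gamma,\nu}$ in the first regime. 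The bifurcation in \eqref{qgm} reflects that once $a_{\gamma,\nu} > 2$, a different truncation strategy becomes optimal: a coarser angular cutoff combined with more Malliavin derivatives yields the rational expression of \eqref{qgm}, which by the forthcoming Lemma \ref{finalproofstep3} is still strictly larger than $2$.
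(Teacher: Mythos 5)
Your high-level strategy matches the paper: represent $f_t$ as the law of a Poisson-driven jump process, approximate it so that the Bally--Cl\'ement calculus applies, and extract Fourier decay from an integration-by-parts formula whose weight is controlled by inverse moments of a Malliavin covariance matrix (with non-degeneracy ultimately traced back to $f_0$ not being a Dirac mass, via Lemma \ref{minob}). The reductions of the $H^r$ bound and the ball estimate to the Fourier decay are also correct, and (ii)--(iii) do come down to Lemma \ref{finalproofstep3}.

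The central gap is that your plan treats the passage from the integration-by-parts formula to the exponent $q_{\gamma,\nu}$ as a one-shot optimization, whereas the paper's argument is an iterative bootstrap, and without it you cannot reach $q_{\gamma,\nu}$. Concretely, Lemma \ref{finalproofstep1} produces a bound on $|\widehat{f_t}(\xi)|$ that depends on an \emph{a priori} ball-concentration exponent $\alpha$ (the number $\alpha$ such that $f_s(Ball(v_0,\e))\leq K\e^\alpha$), because that exponent enters the rate of convergence of $V_t^\e$ to $V_t$ through Proposition \ref{aprate}(v). Optimizing over the two approximation parameters $\e$ and $\zeta$ (Lemma \ref{finalproofstep2}) then yields decay $|\widehat{f_t}(\xi)|\lesssim|\xi|^{-r}$ only for $r<p(\alpha)$, where $p$ is the increasing function of Lemma \ref{finalproofstep3}. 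Starting from the trivial $\alpha_0=0$, one only gets $p(0)$, which is strictly smaller than $q_{\gamma,\nu}$. The full exponent is reached by iterating: Fourier decay of order $p(\alpha_k)$ gives, via Lemma \ref{foufou}, a new ball estimate with exponent $\alpha_{k+1}<p(\alpha_k)$ (after a small time shift), which is fed back into Lemma \ref{finalproofstep2}. The quantity $a_{\gamma,\nu}$ is precisely the fixed point of $p$, not the root of a single power-counting inequality. Your proposal never says how the ball estimate is upgraded between applications of the IBP formula, and as written it would stop at $p(0)$.

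Relatedly, your explanation of the bifurcation in \eqref{qgm} (''coarser angular cutoff combined with more Malliavin derivatives'') is not what is going on. The Plancherel-type Lemma \ref{foufou}, which converts Fourier decay into a ball estimate, only applies for exponents $\alpha\in(0,2)$; the bootstrap is therefore constrained to stay in $[0,2)$, and once $a_{\gamma,\nu}>2$ the iteration saturates at $\alpha\to 2$, giving the terminal decay $p(2)$, which is exactly the second line of \eqref{qgm}. This is a dimension-two constraint on the Fourier-to-ball step, not a change of integration-by-parts strategy. A smaller omission: the paper needs two separate regularizations ($\phi_\e$ to tame $|w|^\gamma$ at $0$ and $\infty$, and the angular truncation $\zeta$ to render the jump measure finite) together with a Gaussian smoothing $\sqrt{u_\zeta(t)}(Z_{-1},Z_0)$ to handle the event of no jumps before time $t$, and the final decay rate comes from balancing three distinct error terms against the IBP gain; your schematic single-parameter estimate does not reflect this.
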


\subsection*{Discussion about the result}
In the realistic case where  $\gamma=(s-5)/(s-1)$
and $\nu=2/(s-1)$,
point (i) applies if $s>7$,
point (ii) applies if $s>8+\sqrt{33}\simeq 13.75$,
point (iii) applies if $s>13+2\sqrt{31}\simeq 24.14$.

\vip

When at least point (ii) applies, this shows in particular that for all 
$t>0$, $H(f_t)<\infty$, where
the entropy is defined as $H(f):=\int_{\rr^2} f(v)\log f(v) dv$.
This allows us to apply many results concerning regularization (see e.g. 
Villani \cite{v:de} or 
Alexandre-Desvillettes-Villani-Wennberg \cite{advw})
or large time behavior (see e.g. Villani \cite{v:h}) 
where the finiteness of entropy is required. 

\vip

Until the middle of the 90's, almost all the works on 
the Boltzmann equation were assuming Grad's angular cutoff, 
where the cross section $B$, which physically satisfies
$\int_0^{\pi/2} B(|v-v_*|,\theta)d\theta = \infty$ 
was replaced by an integrable cross section.
As shown in Mouhot-Villani \cite{mv}, no regularization
may occur under Grad's angular cutoff.
Intuitively, this comes from the fact that 
each particle is subjected to finitely (resp. infinitely) many collisions
on each time interval in the case with (resp. without) cutoff.
See however \cite{f:nr} where it is shown on a simplified
model that some regularization might occur under Grad's angular cutoff, but for
some very soft potentials (i.e. with $\gamma<-1$).

\vip

Here we deal with {\it true} hard potentials and we thus have to
overcome the three following difficulties: $|w|^\gamma$ vanishes
at $0$, explodes at infinity and is not smooth at $0$.
This lack of regularity is the basis of many technical complications.

\vip

Many papers deal with the case of regularized hard potentials,
where $|v-v_*|^\gamma$ is replaced by something like 
$(\e^2+|v-v_*|^2)^{\gamma/2}$. In this situation,
Desvillettes-Wennberg \cite{dw} have shown that if $f_0$ is a function
such that $H(f_0)<\infty$, then $f_t \in C^\infty$ for all $t>0$
for any $\gamma \in (0,1)$, any $\nu\in (0,2)$, in any dimension.

\vip

Another simpler situation is the case of Maxwell molecules,
where $\gamma=0$ so that $|v-v_*|^\gamma$ is constant.
Using a probabilistic approach, Graham-M\'el\'eard
\cite{gm} (for the $1$-dimensional case) and \cite{f:r2d} 
(for the $2$-dimensional case) proved that
if $f_0$ is a measure with some moments of all orders and is not
a Dirac mass, then
$f_t\in C^\infty$ for all $t>0$.
In these works, the finiteness of entropy is not required.

\vip

To our knowledge, the only regularization result that concerns
{\it true} hard potentials is that of Alexandre-Desvillettes-Villani-Wennberg
\cite{advw}: in any dimension $d\geq 2$, if $f_0$ is a function
such that $H(f_0)<\infty$,
then any weak solution satisfies $\sqrt{f_t} \in H^{\nu/2}_{loc}(\rr^2)$ 
for all $t>0$,
for any value of $\gamma \in (-d,1)$ and any value of $\nu \in (0,2)$.

\vip

Let us compare our result with that of \cite{advw}.
The main limitation of our study
is that we work in dimension $2$. 
Furthermore, the result of \cite{advw} applies to all potentials,
while we have to assume at least $s>7$.

A first positive point is that we assume much less regularity
on the initial condition (in \cite{advw}, $f_0$ is already a function):
we only assume that $f_0$ is not a Dirac mass. 
This is a necessary condition
for regularization, since Dirac masses are stationnary solutions
to (\ref{be}).

A second positive point is that we deal with the regularity
of $f_t$, which seems more natural and tractable than that of $\sqrt{f_t}$.

Finally, if $\nu>0$ is small and $\gamma\in(0,1)$ is large,
our result seems really competitive. 
For example if $\gamma=(s-5)/(s-1)$ and $\nu=2/(s-1)$, then
(denoting by $H^{r-}=\cap_{s\in(0,r)} H^s$),

\noindent $\bullet$ with $s=15$ we obtain 
$f_t \in H^{(1/7)-}(\rr^2)$ 
while \cite{advw} yields $\sqrt{f_t} \in H^{1/7}_{loc}(\rr^2)$,

\noindent $\bullet$  with $s=25$ we obtain get $f_t \in
H^{(172/167)-}(\rr^2)$
while \cite{advw} yields $\sqrt{f_t} \in H^{1/12}_{loc}(\rr^2)$,

\noindent $\bullet$  with $s=101$, we obtain  $f_t \in 
H^{(4504/2599)-}(\rr^2)$
while \cite{advw} yields $\sqrt{f_t} \in H^{1/50}_{loc}(\rr^2)$.

\noindent Let us finally mention that for any values of
$\gamma\in (0,1)$ and $\nu\in (0,1/2)$, 
our result will never provide a better 
estimate than $f_t\in H^{2-}(\rr^2)$.

\vip

Thus the result of \cite{advw} and Theorem \ref{main} are complementary:
Theorem \ref{main} works well for large values of $s$, while 
\cite{advw} works well for small values of $s$. For
intermediate values of $s$, Theorem \ref{main} allows us
to apply \cite{advw}, even if the initial 
condition has an infinite entropy.

\vip

We conclude this subsection with a remark on regularized hard potentials:
if $\nu\in(0,1/3)$, our method allows us to extend the result
of Desvillettes-Wennberg \cite{dw} to initial conditions with infinite
entropy.

\begin{rk}
Assume that $B(|v-v_*|,\theta)=(\e^2+|v-v_*|^2)^{\gamma/2}b(\theta)$,
for some $\e>0$, some $\gamma \in (0,1)$ and some $b$ satisfying the same
conditions as in ({\bf A}$(\gamma,\nu)$) for some $\nu \in (0,1/2)$. 
With our method, it is
possible to prove that for $(f_t)_{t\in[0,T]}$ a weak solution
to (\ref{be}) satisfying (\ref{expo}) and such that $f_0$ is not a
Dirac mass, for $0<t_0<T$, $\sup_{[t_0,T]}|\hat f_t (\xi)| \leq C_{t_0,T,r}
(1+|\xi|)^{-r}$ for all $r\in (0,1/\nu-2)$.
In particular if $\nu \in (0,1/3)$, we deduce that $f_t \in L^2(\rr^2)$ 
so that $H(f_t)<\infty$ for any $t>0$. Thus we can apply the result of
\cite{dw}, and deduce that $f_t \in C^\infty(\rr^2)$
for all $t>0$.
\end{rk}

\subsection*{Discussion about the method}

Following the seminal work of Tanaka \cite{t}, we will build a 
stochastic process $(V_t)_{t\in[0,T]}$ such that for each $t\in[0,T]$, 
$\cL(V_t)=f_t$. This process will solve a jumping stochastic differential
equation.
Then we will use some Malliavin calculus to study the smoothness of $f_t$,
in the spirit of Graham-M\'el\'eard \cite{gm}.
When using the classical Malliavin calculus for jumps processes of
Bichteler-Gravereaux-Jacod \cite{bgj}, one can only treat
the case of a constant rate of jump, which corresponds here to the case
where $\gamma=0$. This was done in \cite{gm,f:r2d}.
We thus have to build a suitable Malliavin calculus. 

Recently Bally-Cl\'ement \cite{bc} introduced a new method, still inspired by
\cite{bgj} which allows one to deal with equations with a 
non-constant rate of jump.
They discuss equations with a similar structure as
(\ref{be}), but with much more regular coefficients.
Here we use the same method, but we have to overcome some nontrivial
difficulties related to the singularity and unboundedness 
of the coefficients. The nondegeneracy property is also quite
complicated to establish, in particular because $|v-v_*|^\gamma$
vanishes on the diagonal, and because (\ref{be}) is nonlinear.

\subsection*{Plan of the paper}

In the next section, we give the probabilistic interpretation of
(\ref{be}) in terms of a jumping S.D.E. We also build some
approximations of the process and study their rate of convergence.
Another representation of the approximating processes is given in 
Section \ref{cdv}.
In Section \ref{mc}, we prove an integration by parts formula
for the approximating process, using  the Malliavin calculus introduced
in \cite{bc}. We conclude
the proof in Section \ref{concl}. An appendix containing technical results
lies at the end of the paper.

\subsection*{Notation}

In the whole paper, we assume without loss of generality that
\begin{align}\label{centrage}
\intrd v f_0(dv)=0 \quad \hbox{and} \quad  e_0=\intrd |v|^2f_0(dv)>0.
\end{align}
Observe that $e_0>0$, because else, $f_0$ would be the Dirac
mass at $0$.
We always assume at least that ({\bf A}($\gamma,\nu$)) hold for some 
$\gamma \in (0,1)$, some $\nu \in (0,1)$.
We denote by $(f_t)_{t\geq 0}$ a weak solution to
(\ref{be}) satisfying (\ref{expo}) for some $\delta>\gamma$.
We consider $\eta_0$ such that
\begin{equation}\label{etazero}
\eta_0 \in (1/\delta,1/(\gamma\lor\nu)).
\end{equation}
For $v_0\in \rr^2$ and $r>0$, we denote by
$$
Ball(v_0,r)=\{v\in\rr^2, |v-v_0|<r\}
$$
the open ball centered at $v_0$ with radius $r$.
We will always write $C$ for a finite (large) constant and
$c$ for a positive (small) constant,
of which the values may change
from line to line and which depend only on $b,\nu,\gamma,\delta,\eta_0,T,f_0$.
When a constant depends on another quantity, we will always indicate it.
For example, $C_{t_0}$ or $c_{t_0}$ stand for constants depending on  
$b,\nu,\gamma,\delta,\eta_0,T,f_0$ and $t_0$.

\section{Probabilistic interpretation and approximation}
\setcounter{equation}{0}

We first build a Markov process $(V_t)_{t\in [0,T]}$, solution
to a jumping stochastic differential equation, whose time marginals
will be $(f_t)_{t\in [0,T]}$. The weak solution $(f_t)_{t\in [0,T]}$ being
given,
we consider a Poisson measure $N(ds,d\theta,dv,du)$ on 
$[0,T] \times [-\pi/2,\pi/2]\times \rr^2 \times
[0,\infty)$ with intensity measure $ds b(\theta)d\theta f_s(dv) 
du$. Then for a $\rr^2$-valued $f_0$-distributed random variable 
$V_0$ independent of $N$, we consider the
$\rr^2$-valued stochastic differential equation,
setting
$E= [-\pi/2,\pi/2]\times \rr^2\times[0,\infty)$,

\begin{equation}\label{sde}
V_t = V_0 + \intot \int_E A(\theta)(V_\sm-v) \indiq_{\{u\leq |V_\sm-v|^\gamma\}} 
N(ds,d\theta,dv,du).
\end{equation}
We also introduce some approximations of the process $(V_t)_{t\in[0,T]}$.
We consider
a $C^\infty$ even nonnegative function $\chi$ supported by $(-1,1)$ 
satisfying $\int_\rr \chi(x)dx=1$.
Then we introduce, for $x\in \rr$ and $\e\in (0,1)$, (recall
(\ref{etazero}))
\begin{equation}\label{dfphie}
\Gamma_\e=[\log(1/\e)]^{\eta_0}, \quad 
\phi_\e(x)= \int_\rr ((y \lor 2\e )\land \Gamma_\e) \frac{\chi((x-y)/\e)}{\e}dy.
\end{equation}
Observe that we have $2\e \leq \phi_\e(x) \leq \Gamma_\e$ for all $x\geq 0$,
$\phi_\e(x)=x$ for $x\in [3\e,\Gamma_\e-1]$, $\phi_\e(x)=2\e$ 
for $x \in [0,\e]$ and $\phi_\e(x)=\Gamma_\e$ for $x\geq \Gamma_\e+1$.
We find $\e_0>0$ small enough, in such a way that
for $\e\in (0,\e_0)$, $3\e < 1 < \Gamma_\e-1$ and consider, 
for $\e\in (0,\e_0)$, the equation
\begin{align}
&V_t^\e = V_0 + \intot \int_E
A(\theta)(V_\sm^\e-v) \indiq_{\{u\leq \phi_\e^\gamma(|V_\sm^\e-v|)\}}
N(ds,d\theta,dv,du), \label{sdeeps}
\end{align}
Next we introduce, for $\zeta\in (0,1)$, a function $I_\zeta:\rr_+\mapsto [0,1]$
such that $I_\zeta(x)=1$ for $x\geq \zeta$ and vanishing on a neighborhood
of $0$. We will choose $I_\zeta$ in the next section as a smooth version
of $\indiq_{\{x\geq \zeta\}}$. We consider the equation
\begin{align}
&V_t^{\e,\zeta} = V_0 + \intot \int_E
A(\theta)(V_\sm^{\e,\zeta}-v) 
\indiq_{\{u\leq \phi_\e^\gamma(|V_\sm^{\e,\zeta}-v|)\}}I_\zeta(|\theta|)
N(ds,d\theta,dv,du). \label{sdeepsM}
\end{align}
The goal of this section is to check the following results.

\begin{prop}\label{aprate}
(i) There exists a unique c\`adl\`ag adapted solution $(V_t)_{t\in[0,T]}$
to (\ref{sde}).  For each $\e\in (0,\e_0)$ 
and each $\zeta\in(0,1)$, there exist some unique c\`adl\`ag adapted solutions
$(V_t^\e)_{t\in[0,T]}$ and $(V_t^{\e,\zeta})_{t\in[0,T]}$ to (\ref{sdeeps})
and (\ref{sdeepsM}).

(ii) For all $t\in[0,T]$, $V_t$ is $f_t$-distributed.

(iii) For any $\kappa \in (\nu,\delta)$, 
any $\e\in(0,\e_0)$, any $\zeta\in(0,1)$,
$$
\E\left[\sup_{[0,T]}\left(e^{|V_t|^\kappa}+e^{|V_t^\e|^\kappa}
+e^{|V_t^{\e,\zeta}|^\kappa}\right) \right] \leq C_{\kappa}.
$$

(iv) For any $\beta \in (\nu,1]$,
any $\e\in(0,\e_0)$, any $\zeta\in(0,1)$,
\begin{equation*}
\sup_{[0,T]} \E\left[|V_t^\e-V^{\e,\zeta}_t|^\beta\right] 
\leq C_{\beta} e^{C_{\beta} \Gamma_\e^\gamma} \zeta^{\beta-\nu}.
\end{equation*}

(v) Assume furthermore that for some $\alpha\geq 0$, 
some $K$, for all $v_0\in \rr^2$, for all $\e\in (0,1]$,
\begin{equation*}
\sup_{[0,T]} f_t(Ball(v_0,\e)) \leq K \e^\alpha. 
\end{equation*}
This {\bf always} holds with $K=1$, $\alpha=0$.
Then for any $\beta \in (\nu,1]$,
any $\e\in(0,\e_0)$, any $\zeta\in(0,1)$,
\begin{equation*}
\sup_{[0,T]} \E\left[|V_t-V^\e_t|^\beta\right] \leq C_{\beta,K} 
e^{C_{\beta} \Gamma_\e^\gamma}\e^{\beta+\gamma+\alpha}.
\end{equation*}
\end{prop}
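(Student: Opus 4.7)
The plan is to dispatch (i)-(iii) with standard existence/uniqueness and moment arguments for jumping SDEs, and then treat (iv) and (v) through a single coupling template: drive both processes by the common Poisson measure $N$, apply subadditivity of $x\mapsto x^\beta$ to the jump representation of the difference, split the resulting integrand according to whether the two jump-indicators agree or disagree, use the exponential moments of (iii) to tame unbounded velocity factors, and close by Gronwall's lemma.

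\textbf{Parts (i)-(iii).} For (\ref{sdeepsM}) the total intensity per unit time is bounded by $\Gamma_\e^\gamma\int b(\theta)I_\zeta(|\theta|)d\theta<\infty$, so a jump-by-jump construction yields well-posedness. For (\ref{sdeeps}) and (\ref{sde}), $|A(\theta)X|\leq(|\theta|/2)|X|$ combined with $\int\theta^2 b(\theta)d\theta<\infty$ (since $\nu<1$) and the second-moment control inherited from $f_s$ give a finite compensated quadratic variation, so a Picard iteration in $L^2$ produces the unique solution; the Hölder inequality $||a|^\gamma-|b|^\gamma|\leq|a-b|^\gamma$ handles the non-Lipschitz indicator in the contractivity estimate. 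For (ii), Itô's formula applied to $\psi(V_t)$ for any Lipschitz $\psi$ shows that $g_t:=\cL(V_t)$ satisfies the \emph{linear} weak equation obtained from (\ref{wbe}) by fixing $f_s$ in the intensity; since $f_t$ is also a solution of this linear equation, weak uniqueness yields $g_t=f_t$. For (iii), the fixed-$t$ bound $\E[e^{|V_t|^\kappa}]\leq C_\kappa$ is inherited from $\cL(V_t)=f_t$ and (\ref{expo}); the supremum bound follows from Itô applied to $e^{|V_t|^\kappa}$, whose compensator is controlled because $\kappa>\nu$ implies $\int|\theta|^\kappa b(\theta)d\theta<\infty$ and $f_s$ has the required exponential moments in $|v|$, combined with Doob's inequality. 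The $V^\e$ and $V^{\e,\zeta}$ versions are easier since their truncated intensities are dominated by the untruncated one, giving uniform bounds in $\e,\zeta$.

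\textbf{Part (iv).} Set $W_t=V^\e_t-V^{\e,\zeta}_t$. Since $|x+y|^\beta\leq|x|^\beta+|y|^\beta$ for $\beta\in(0,1]$, the jump representation of $W$ yields
$$\E|W_t|^\beta\leq\E\intot\int_E|A(\theta)|^\beta|\Delta_s|^\beta\,b(\theta)d\theta f_s(dv)du,$$
where $\Delta_s$ denotes the jump-increment of $W$. I would split the integrand into (a) the portion with $|\theta|\geq\zeta$ and matched $\phi_\e^\gamma$-indicators, for which $|\Delta_s|\leq|A(\theta)W_\sm|$ and which contributes a Gronwall term $C\Gamma_\e^\gamma\int|\theta|^\beta b(\theta)d\theta\intot\E|W_s|^\beta ds$; (b) the portion with $|\theta|\geq\zeta$ and mismatched indicators, whose $u$-length is $\leq C|W_\sm|$ by Lipschitzness of $\phi_\e^\gamma$ on $[2\e,\Gamma_\e]$, also absorbed into Gronwall; and (c) the $(1-I_\zeta(|\theta|))$ piece, where $\int_{|\theta|<\zeta}|\theta|^\beta b(\theta)d\theta\leq C\zeta^{\beta-\nu}$ is finite precisely because $\beta>\nu$, contributing at most $C\Gamma_\e^\gamma\zeta^{\beta-\nu}$ against the bounded moments of $V^{\e,\zeta}-v$. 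Gronwall closes.

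\textbf{Part (v) and main obstacle.} The same template applied to $W_t=V_t-V^\e_t$ produces a matched Gronwall term and an error term controlled by the length of the $u$-symmetric difference between $[0,|V_\sm-v|^\gamma]$ and $[0,\phi_\e^\gamma(|V^\e_\sm-v|)]$. Using $|a^\gamma-b^\gamma|\leq|a-b|^\gamma$, this length is bounded by $|W_\sm|^\gamma+g_\e(|V^\e_\sm-v|)$, where $g_\e(x):=|x^\gamma-\phi_\e^\gamma(x)|$ is supported on $[0,3\e]\cup[\Gamma_\e-1,\infty)$. The $|W_\sm|^\gamma$ contribution is absorbed into Gronwall via $|W_\sm|^\gamma\leq1+|W_\sm|^\beta$ when $\beta\geq\gamma$, or via Cauchy-Schwarz against the moment bounds from (iii) when $\beta<\gamma$. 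The $g_\e$ contribution splits into a near-diagonal part on $\{|V^\e_\sm-v|\leq3\e\}$, on which $g_\e\leq(3\e)^\gamma$ and $|V^\e_\sm-v|^\beta\leq(3\e)^\beta$, so that integrating against $f_s(dv)$ and invoking $f_s(Ball(\cdot,3\e))\leq K\e^\alpha$ yields $CK\e^{\beta+\gamma+\alpha}$; and a tail part on $\{|V^\e_\sm-v|\geq\Gamma_\e-1\}$, which is super-polynomially small in $\e$ thanks to the exponential moments from (iii) and the choice $\Gamma_\e=[\log(1/\e)]^{\eta_0}$. A final Gronwall with constant $C\Gamma_\e^\gamma$ (since $\phi_\e^\gamma\leq\Gamma_\e^\gamma$) produces the stated bound. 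The main obstacle is exactly this bookkeeping: keeping the Gronwall constant at $C\Gamma_\e^\gamma$ (so the exponential blow-up stays compatible with the sub-logarithmic growth of $\Gamma_\e^\gamma$) while extracting the full $\e^{\beta+\gamma+\alpha}$ from the near-diagonal region, which requires exploiting the ball-bound hypothesis to upgrade the naive $\e^\beta$ estimate (from $|\Delta V|\leq|\theta|\cdot 3\e$ on this set) by the extra factors $\e^\gamma$ (from $g_\e$) and $\e^\alpha$ (from the $f_s$-measure of the ball).
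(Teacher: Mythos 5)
Your high-level architecture for (iv)--(v) (subadditivity, split, exponential moments, Gronwall) matches the paper's, but the way you close the Gronwall has a concrete flaw. In (iv) you bound the mismatched $u$-length by ``$C|W_\sm|$ by Lipschitzness of $\phi_\e^\gamma$.'' That Lipschitz constant is $\sim\gamma(2\e)^{\gamma-1}$, which blows up as $\e\to 0$: if that were the bound, the Gronwall constant would be $\e^{\gamma-1}$, not $\Gamma_\e^\gamma$, and the resulting $e^{C\e^{\gamma-1}}$ is far too large for the claimed estimate (recall $\Gamma_\e^\gamma=[\log(1/\e)]^{\gamma\eta_0}$ must stay sub-logarithmic). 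Moreover, even setting the Lipschitz constant aside, this gives a Gronwall term in $\E[|W_s|]$ while the left side has $\E[|W_s|^\beta]$, $\beta<1$, so the powers don't match. The paper avoids both problems simultaneously via Lemma \ref{rphie}-(i), which is \emph{not} a Lipschitz estimate for $\phi_\e^\gamma$ alone but the joint inequality $x^\beta|\phi_\e^\gamma(x)-\phi_\e^\gamma(y)|\leq C_\beta\Gamma_\e^\gamma|x-y|^\beta$; combining the jump-magnitude prefactor $x^\beta$ with the indicator-length difference is what restores the $\beta$-power and keeps the constant at $\Gamma_\e^\gamma$. The same gap propagates into (v): you use $|a^\gamma-b^\gamma|\leq|a-b|^\gamma$ and arrive at a term in $|W_\sm|^\gamma$, which you try to absorb via $|W_\sm|^\gamma\leq 1+|W_\sm|^\beta$. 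But the constant ``$+1$'' produces an $O(T)$ contribution that does not vanish as $\e\to 0$, so the claimed $\e^{\beta+\gamma+\alpha}$ error cannot be recovered. The paper's split is different and essential: first compare $h$ and $h_\e$ at the \emph{same} state $V_s$ (giving a term supported on $\{|V_s-v|\leq 3\e\}\cup\{|V_s-v|\geq\Gamma_\e-1\}$, hence the full $\e^{\beta+\gamma+\alpha}$ via the ball bound), then compare $h_\e$ at states $V_s$ and $V_s^\e$ via Lemma \ref{rphie}-(i) for the clean Gronwall term.

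There is also a gap in (iii). You deduce $\E[e^{|V_t|^\kappa}]\leq C_\kappa$ from $\cL(V_t)=f_t$ and then assert that the $V^\e$, $V^{\e,\zeta}$ cases are ``easier because their truncated intensities are dominated.'' But $\cL(V^\e_t)\neq f_t$, so nothing is inherited, and boundedness of the intensity $\phi_\e^\gamma\leq\Gamma_\e^\gamma$ is not enough: a naive Gronwall for $\E[e^{|V^\e_t|^\kappa}]$ gives a constant depending on $\Gamma_\e^\gamma$, whereas (iii) must hold uniformly in $\e,\zeta$. The paper obtains this uniformity via Lemma \ref{mp}, whose \emph{negative} drift term $-c_\kappa\indiq_{\{|V|\geq 1,\,|V|\geq C|v|\}}$ kills the positive contribution for large $|V|$ before one ever bounds $\phi_\e^\gamma$; this dissipative structure of the collision operator is what you need to invoke, and it is a nontrivial ingredient you have skipped.
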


Observe that $e^{C\Gamma_\e^\gamma}$ is not very large: since $\Gamma_\e^\gamma
= [\log(1/\e)]^{\gamma\eta_0}$ with $\gamma \eta_0<1$ (recall (\ref{etazero})), we have
$e^{C\Gamma_\e^\gamma}\leq C_\eta \e^{-\eta}$, for any $\eta>0$.

\begin{proof} We handle the proof in several steps. In Steps 1-5,
we assume that  $(V_t)_{t\in [0,T]}$, $(V_t^\e)_{t\in [0,T]}$
and $(V_t^{\e,\zeta})_{t\in [0,T]}$
exist and prove points (iii)-(v).
Points (i) and (ii) are then checked in Steps 6-7.

\vip

{\it Step 1.} We first check that for 
$\kappa\in (\nu,\delta)$,
$$
\sup_{[0,T]}\E\left[e^{|V_t|^\kappa}+e^{|V^\e_t|^\kappa}+e^{|V^{\e,\zeta}_t|^\kappa}\right]
\leq C_{\kappa}.
$$
Let us for example treat the case of $(V^{\e}_t)_{t\in[0,T]}$. 
We have 
\begin{align}\label{es1}
e^{|V_t^{\e}|^\kappa} = e^{|V_0|^\kappa} + \intot \int_E
\left[ e^{|V^{\e}_\sm+ A(\theta)(V_\sm^{\e}-v) \big|^\kappa} - e^{|V_\sm^{\e}|^\kappa}
\right]
\indiq_{\{u\leq \phi_\e^\gamma(|V_\sm^{\e}-v|)\}}
N(ds,d\theta,dv,du).
\end{align}
Taking expectations and using Lemma \ref{mp},
\begin{align*}
\E\left[e^{|V_t^{\e}|^\kappa}\right] =& \E\left[e^{|V_0|^\kappa}\right] 
+ \intot ds \int_{-\pi/2}^{\pi/2}
b(\theta)d\theta\intrd f_s(dv) \\
&\hskip3cm 
\E\left[ \left(e^{|V^{\e}_s+ A(\theta)(V_s^{\e}-v) \big|^\kappa} - e^{|V_s^{\e}|^\kappa}
\right)  \phi_\e^\gamma(|V_s^{\e}-v|) \right]\\
\leq&\E\left[e^{|V_0|^\kappa}\right] + \intot ds \intrd f_s(dv) \E\Big[
\phi_\e^\gamma(|V_s^{\e}-v|) e^{|V_s^\e|^\kappa} \\
&\hskip3cm \left( 
-c_\kappa \indiq_{\{|V^\e_s|\geq 1, |V^\e_s|\geq C |v|\}} 
+ C_\kappa(|V^\e_s|\lor 1)^{\kappa+\nu-2}e^{C_\kappa|v|^\kappa}
\right)
\Big].
\end{align*}
But $\kappa+\nu-2<0$, so that
for $|V|\geq M_\kappa(v):=
\max\{1,C|v|,[C_\kappa e^{C_\kappa |v|^{\kappa}}/c_\kappa]^{1/(2-\nu-\kappa)}
\}$, we have
$$
-c_\kappa \indiq_{\{|V|\geq 1, |V|\geq C |v|\}} 
+ C_\kappa(|V|\lor 1)^{\kappa+\nu-2}e^{C_\kappa|v|^\kappa}\leq 0. 
$$
Changing the values of the constants, $M_\kappa(v)\leq C_\kappa 
e^{C_\kappa|v|^\kappa}$. Thus
\begin{align*}
\E\left[e^{|V_t^{\e}|^\kappa}\right] \leq&\E\left[e^{|V_0|^\kappa}\right] +  C_\kappa
\intot ds \intrd f_s(dv) \E\left[\phi_\e^\gamma(|V_s^{\e}-v|) e^{|V_s^\e|^\kappa} 
\indiq_{\{|V^\e_s|\leq C_\kappa e^{C_\kappa|v|^\kappa}\}} e^{C_\kappa|v|^\kappa}
\right].
\end{align*}
Since now $\phi_\e^\gamma(|V-v|)\leq (1+|V|+|v|)^\gamma$, we deduce that
$\phi_\e^\gamma(|V-v|) \indiq_{\{|V|\leq C_\kappa e^{C_\kappa|v|^\kappa}\}} 
e^{C_\kappa|v|^\kappa}\leq C_\kappa e^{C_\kappa|v|^\kappa}$, whence
\begin{align*}
\E\left[e^{|V_t^{\e}|^\kappa}\right] \leq&\E\left[e^{|V_0|^\kappa}\right] + 
C_\kappa\intot ds \intrd f_s(dv) \E\left[e^{|V_s^\e|^\kappa}\right]
e^{C_\kappa|v|^\kappa}
\leq C_\kappa+ C_{\kappa} \intot ds  
\E\left[e^{|V_s^\e|^\kappa}\right].
\end{align*}
We finally used (\ref{expo}), that $\kappa<\delta$ and that
$V_0\sim f_0$. The Gronwall Lemma allows us to conclude.

\vip

{\it Step 2.} We now prove (iii), for example with $(V^{\e}_t)_{t\in[0,T]}$.
Using (\ref{es1}) and Lemma \ref{mp}, we obtain
\begin{align*}
\E\left[\sup_{[0,T]} e^{|V_t^{\e}|^\kappa}\right] \leq& \E\left[e^{|V_0|^\kappa}\right] 
+ \int_0^T ds \int_{-\pi/2}^{\pi/2} b(\theta)d\theta
\intrd f_s(dv) \\
& \hskip3cm
\E\left[  
\left|e^{|V^{\e}_s+ A(\theta)(V_s^{\e}-v) \big|^\kappa} - e^{|V_s^{\e}|^\kappa}
\right|  \phi_\e^\gamma(|V_s^{\e}-v|)  \right] \\
\leq& C_\kappa + C_\kappa \int_0^T ds \intrd f_s(dv) \E\left[ \phi_\e^\gamma 
(|V^\e_s-v|)e^{C_\kappa |v|^\kappa}
e^{C_\kappa |V^\e_s|^\kappa}\right] \\
\leq& C_\kappa + C_\kappa \int_0^T ds \intrd f_s(dv) e^{C_\kappa |v|^\kappa}\E\left[
e^{C_\kappa |V^\e_s|^\kappa}\right].
\end{align*}
We used here that $\phi_\e^\gamma(|V-v|)e^{C_\kappa|V|^\kappa}e^{C_\kappa|v|^\kappa}  
\leq (1+|V|+|v|)^\gamma e^{C_\kappa|V|^\kappa}e^{C_\kappa|v|^\kappa}\leq
e^{C_\kappa|V|^\kappa}e^{C_\kappa|v|^\kappa}$.
Step 1 and (\ref{expo}) allow us to conclude, 
for $\kappa \in (\nu,\delta)$.

\vip

{\it Step 3.} We set
\begin{align*}
h(u,v,\theta,w)=A(\theta)(w-v)\indiq_{\{u\leq |w-v|^\gamma\}}
\;\hbox{ and }\;
h_\e(u,v,\theta,w)=A(\theta)(w-v)\indiq_{\{u\leq \phi_\e^\gamma(|w-v|)\}}
\end{align*} 
and
we prove that for $\beta \in (0,1]$,
\begin{align}
&\int_0^\infty |(h-h_\e)(u,v,\theta,w)|^\beta du \leq
C |\theta|^\beta |w-v|^{\beta}(\e^\gamma \indiq_{\{|w-v|\leq 3\e\}} + 
|w-v|^\gamma\indiq_{\{|w-v|\geq \Gamma_\e-1\}}), \label{fc1}\\
&\int_0^\infty |h_\e(u,v,\theta,w)-h_\e(u,v,\theta,\tw)|^\beta du\leq
C_\beta |\theta|^\beta \Gamma_\e^\gamma |w-\tw|^\beta.\label{fc2}
\end{align}
We notice that $|A(\theta)|\leq |\theta|$ (see (\ref{tropcool})) and
recall that
$\phi_\e(x)=x$ for $x\in [3\e,\Gamma_\e-1]$, that $\phi_\e(x)\leq 3\e$ 
for $x\in [0,3\e]$ and that $\phi_\e(x)\leq x$ for $x\geq \Gamma_\e-1$. 
The left hand side of (\ref{fc1}) is bounded by
\begin{align*}
&|\theta|^\beta |w-v|^\beta \int_0^\infty 
\left|\indiq_{\{u\leq |v-w|^\gamma\}}- \indiq_{\{u\leq \phi_\e^\gamma(|v-w|)}\right| du\\
\leq& |\theta|^\beta |w-v|^\beta \left||v-w|^\gamma- \phi_\e^\gamma(|v-w|)\right| 
\\
\leq &   |\theta|^\beta |w-v|^\beta (\indiq_{\{|w-v|\leq 3\e\}} + 
\indiq_{\{|w-v|\geq \Gamma_\e-1\}})
\big||w-v|^\gamma - \phi_\e^\gamma(|w-v|)  \big| \\
\leq& |\theta|^\beta  |w-v|^\beta 
\left( \indiq_{\{|w-v|\leq 3\e\}} (3\e)^\gamma
+  \indiq_{\{|w-v|\geq \Gamma_\e-1\}} |w-v|^\gamma  \right).
\end{align*}
Similarly, using Lemma \ref{rphie}-(i) and that $\phi_\e\leq \Gamma_\e$, 
the left hand side of (\ref{fc2})
is bounded by
\begin{align*}
& |\theta|^\beta \big| (w-v) - (\tw -v)\big|^\beta \phi_\e^\gamma(|w-v|) 
+  |\theta|^\beta 
|\tw-v|^\beta \big|\phi_\e^\gamma(|w-v|)- \phi_\e^\gamma(|\tw - v |) \big| \\
\leq&  |\theta|^\beta |w-\tw|^\beta \Gamma_\e^\gamma
+  C_\beta |\theta|^\beta \Gamma_\e^\gamma ||w-v| - |\tw-v||^\beta
\leq C_\beta |\theta|^\beta |w-\tw|^\beta \Gamma_\e^\gamma.
\end{align*}

\vip

{\it Step 4.}  We now prove (iv). Let thus $\beta \in (\nu,1]$.
Since $x\mapsto x^\beta$ is sub-additive, we can write
\begin{align*}
\E\left[|V^\e_t-V^{\e,\zeta}_t|^\beta\right]\leq&
\int_0^t ds\int_{-\pi/2}^{\pi/2} b(\theta)d\theta \intrd f_s(dv)
\int_0^\infty du \E\left[|h_\e(u,v,\theta,V^{\e,\zeta}_s)
-h_\e(u,v,\theta,V^\e_s)|^\beta \right]\\
&+ \intot ds\int_{-\pi/2}^{\pi/2} (1-I_\zeta(|\theta|))^\beta b(\theta)d\theta
\intrd f_s(dv) \int_0^\infty du \E\left[|h_\e(u,v,\theta,V^{\e,\zeta}_s)|^\beta 
\right].
\end{align*}
Using (\ref{fc2}) and that 
$0\leq 1-I_\zeta(|\theta|)\leq \indiq_{\{|\theta|\leq \zeta\}}$, we get
\begin{align*}
\E\left[|V^\e_t-V^{\e,\zeta}_t|^\beta\right]
\leq& C_\beta \Gamma_\e^\gamma \intot ds \int_{-\pi/2}^{\pi/2}
b(\theta)d\theta
|\theta|^\beta \E\left[|V^\e_s-V^{\e,\zeta}_s|^\beta\right] \\
&+ C_\beta  \intot ds 
\int_{-\zeta}^{\zeta} b(\theta)d\theta |\theta|^\beta
\intrd f_s(dv)
\E\left[\phi_\e^\gamma(|V^{\e,\zeta}_s-v|)|V^{\e,\zeta}_s-v|^\beta\right].
\end{align*}
Using ({\bf A}$(\gamma,\nu))$, since $\beta>\nu$ and since 
$\phi_\e^\gamma(|V-v|)|V-v|^\beta \leq C(1+|v|^2+|V|^2)$, this yields
\begin{align*}
\E\left[|V^\e_t-V^{\e,\zeta}_t|^\beta\right]\leq&
C_\beta \Gamma_\e^\gamma \intot \E\left[|V^\e_s-V^{\e,\zeta}_s|^\beta\right] ds\\
&+ C_\beta 
\zeta^{\beta-\nu} \intot ds \intrd f_s(dv)\E\left[1+|V^{\e,\zeta}_s|^2+|v|^2
\right]\\
\leq& C_\beta \Gamma_\e^\gamma \intot 
\E\left[|V^\e_s-V^{\e,\zeta}_s|^\beta\right] ds
+ C_{\beta} \zeta^{\beta-\nu},
\end{align*}
where we used (\ref{expo}) and point (iii).
The Gronwall Lemma allows us to conclude.

\vip

{\it Step 5.} Let us check (v), for some $\beta \in (\nu,1]$ fixed. 
Using again the sub-additivity of $x\mapsto x^\beta$,
(\ref{fc1}-\ref{fc2}), ({\bf A}$(\gamma,\nu))$ and that $\beta>\nu$, we obtain
\begin{align*}
\E\left[|V_t-V^\e_t|^\beta \right]\leq& \intot ds \int_{-\pi/2}^{\pi/2}
b(\theta)d\theta \intrd f_s(dv) \int_0^\infty du
\E\left[ |h(u,v,\theta,V_s)-h_\e(u,v,\theta,V_s^\e)|^\beta    \right].
\end{align*}
We infer from (\ref{fc1}-\ref{fc2}), ({\bf A}$(\gamma,\nu))$ 
and the fact that $\beta>\nu$ that
\begin{align*}
\E\left[|V_t-V^\e_t|^\beta \right]
\leq& C_\beta \intot ds \int_{-\pi/2}^{\pi/2} b(\theta)d\theta
|\theta|^\beta  \intrd f_s(dv) \\
&\hskip1cm \E\left(|V_s-v|^\beta
(\e^\gamma\indiq_{\{|V_s-v|\leq 3\e\}} +|V_s-v|^\gamma 
\indiq_{\{|V_s-v|\geq \Gamma_\e-1\}}) + \Gamma_\e^\gamma |V_s-V_s^\e|^\beta\right)\\
\leq &  C_\beta \e^{\beta+\gamma}\intot ds \E \left[ f_s(Ball(V_s,3\e))\right]
+ C_\beta \Gamma_\e^\gamma \intot ds \E\left[|V_s-V^\e_s|^\beta \right] \\
&+  C_\beta \intot ds \intrd f_s(dv) \E \left[|V_s-v|^{\beta+\gamma}
\indiq_{\{|V_s-v|\geq \Gamma_\e-1\}}) \right].
\end{align*}
By assumption, we have 
$$
\sup_{[0,T]} \E\left[f_s(Ball(V_s,3\e)) \right] 
\leq 3^\alpha K \e^\alpha.
$$
Next (\ref{expo}) and point (iii) yield, for
$\kappa \in (1/\eta_0,\delta)$,
\begin{align*}
 \intrd f_s(dv) \E \left[|V_s-v|^{\beta+\gamma}
\indiq_{\{|V_s-v|\geq \Gamma_\e-1\}}) \right]
\leq & \intrd  f_s(dv) \E \left[(|V_s|+|v|)^{\beta+\gamma}
\indiq_{\{|V_s|+|v|\geq \Gamma_\e-1\}}) \right]\\
\leq & e^{-(\Gamma_\e-1)^\kappa} \intrd  f_s(dv) \E \left[(|V_s|+|v|)^{\beta+\gamma}
e^{(|V_s|+|v|)^\kappa} \right]\\
\leq & C_\kappa e^{-\Gamma_\e^\kappa} \intrd  f_s(dv) \E \left[
e^{C_\kappa (|V_s|+|v|)^\kappa} \right] \leq C_\kappa e^{-\Gamma_\e^\kappa}.\\
\end{align*}
Thus we have
\begin{align*}
\E\left[|V_t-V^\e_t|^\beta \right]\leq C_{\beta,\kappa,K}(\e^{\beta+\gamma+\alpha} + 
e^{-\Gamma_\e^\kappa}) + 
C_\beta \Gamma_\e^\gamma \intot ds \E\left[|V_s-V^\e_s|^\beta \right],
\end{align*}
whence $\E\left[|V_t-V^\e_t|^\beta \right] \leq 
C_{\beta,\kappa,K}(\e^{\beta+\gamma+\alpha} + e^{-\Gamma_\e^\kappa})e^{C_\beta \Gamma_\e^\gamma T}$
by the Gronwall Lemma. We easily conclude, since $\kappa>\gamma$
and since $\Gamma_\e^\kappa = [\log(1/\e)]^{\kappa\eta_0}$,
with $\kappa \eta_0>1$.

\vip

{\it Step 6.} We now prove point (i). First, the strong existence and
uniqueness of a solution $(V^{\e,\zeta}_t)_{t\in[0,T]}$ to (\ref{sdeepsM}) 
is obvious, since the Poisson measure used in (\ref{sdeepsM})
is a.s. finite because since $I_\zeta$ vanishes on a neighborhood of $0$,
$$
\int_0^T \int_E \indiq_{\{I_\zeta(|\theta|)\ne 0, u\leq \Gamma_\e^\gamma\}} 
ds b(\theta)d\theta f_s(dv)du <\infty.
$$
Similar arguments as in point (iv) allow us
to pass to the limit as $\zeta \to 0$ (recall that $I_\zeta(|\theta|)\to
\indiq_{\{\theta\ne 0\}}$) and to deduce that
there exists a unique solution to $(V^{\e}_t)_{t\in[0,T]}$
to (\ref{sdeeps}).
Finally, we use similar arguments as in point (v) to prove the 
existence and uniqueness of a solution $(V_t)_{t\in[0,T]}$ to (\ref{sde}), 
by taking the limit $\e\to 0$.

\vip

{\it Step 7.} It remains to show that $V_t \sim f_t$ for all $t\in[0,T]$.
To this end, we denote by $g_t$ the law of $V_t$. Then $g_0=f_0$ 
by assumption. Using the It\^o formula for jump processes
and taking expectations, 
we see that $(g_t)_{t\in[0,T]}$ solves the following linear Boltzmann equation:
for all $\psi:\rr^2\mapsto \rr$ globally Lipschitz continuous,
\begin{equation*}
\frac{d}{dt}\intrd \psi(v)\, g_t(dv) = 
\intrd g_t(dv) \intrd f_t(dv_*) \int_{-\pi/2}^{\pi/2} b(\theta)d\theta
|v-v_*|^\gamma
\left[\psi(v+A(\theta)(v-v_*))-\psi(v) \right].
\end{equation*}
Of course, $(f_t)_{t\in[0,T]}$ also solves this linear equation.
Thus $(g_t)_{t\in[0,T]}=(f_t)_{t\in[0,T]}$ by a uniqueness argument.
The uniqueness for this linear equation can be
derived from the uniqueness of the solution to (\ref{sde}), by using the results
of Bhatt-Karandikar \cite[Theorem 5.2]{bk}, see 
\cite[Lemma 4.6]{fg} for very similar considerations in a very close situation.
\end{proof}

\section{Some substitutions}\label{cdv}
\setcounter{equation}{0}

The Malliavin calculus we will use in the next sections concerns
the solution $(V^{\e,\zeta}_t)_{t\in[0,T]}$ of (\ref{sdeepsM}).
Since $\phi_\e^\gamma \leq \Gamma_\e^\gamma\leq 2\Gamma_\e^\gamma$
(we will need a few scope), we can write
$$
V^{\e,\zeta}_t=V_0+\intot \int_{-\pi/2}^{\pi/2}\intrd 
\int_0^{2\Gamma_\e^\gamma}
A(\theta)(V^{\e,\zeta}_\sm-v) 
I_\zeta(|\theta|)
\indiq_{\{u\leq \phi_\e^\gamma(|V^{\e,\zeta}_\sm-v| )\}}
N(ds,d\theta,dv,du).
$$
Recall that the instensity measure of $N$ is given by 
$ds b(\theta)d\theta f_t(dv)du $. 
Our goal in this section is to modify this formula in order
to get an expression in adequacy with \cite{bc}.
First of all, we use the Skorokhod representation Theorem
to find a measurable application $v_t:[0,1]\mapsto \rr^2$ such that
for all $\psi:\rr^2\mapsto \rr_+$, 
\begin{align}\label{cdv1}
\int_0^1 \psi(v_t(\rho))d\rho
=\intrd\psi(v) f_t(dv). 
\end{align}
Next, we consider the following
function $G: x\in (0,\pi/2) \mapsto (0,\infty)$
$$
G(x)=\int_x^{\pi/2} b(\theta)d\theta
$$
and its inverse $\vartheta:(0,\infty)\mapsto (0,\pi/2)$
(i.e. $G(\vartheta(z))=z$) and we set $\vartheta(z)=-\vartheta(-z)$
if $z< 0$. Then for all $\psi:[-\pi/2,\pi/2]\backslash\{0\}\mapsto 
\rr_+$,
\begin{align}\label{cdv2}
\int_{-\pi/2}^{\pi/2} 
\psi(\theta)b(\theta)d\theta= \int_{\rr_*}\psi(\vartheta(z))dz.
\end{align}
Notice that $\vartheta$ is smooth on $(-\infty,0)\cup(0,\infty)$.
Since $b(\theta)\simeq|\theta|^{-1-\nu}$ by assumption, 
we have $G(x)\simeq \nu^{-1}(x^{-\nu}
- (\pi/2)^{-\nu})$, and thus $\vartheta(z)\simeq
(\nu z + (2/\pi)^{\nu})^{-1/\nu} \simeq (1+z)^{-1/\nu}$. 
See Lemma \ref{dervartheta} for some precise estimates.

\vip

Observe now that for all $z\in\rr_*$,
\begin{align}\label{equi}
|\vartheta(z)| > \zeta \quad \Longleftrightarrow \quad |z| < G(\zeta).
\end{align}
We choose $I_\zeta$ in such a way that for $\bIz(z)= I_\zeta(\vartheta(|z|))$,
$\bIz:\rr \mapsto [0,1]$ is smooth (with all its derivatives bounded
uniformly in $\zeta$) and verifies
$\bIz(z)=1$ for $|z|\leq G(\zeta)$ and $\bIz(z)=0$ for $|z|\geq G(\zeta)+1$.

\vip

We can write, using the substitutions $\theta=\vartheta(z)$ and $v=v_s(\rho)$,
$$
V^{\e,\zeta}_t=V_0+\intot \int_0^1\int_{-G(\zeta)-1}^{G(\zeta)+1}
\int_0^{2\Gamma_\e^\gamma}
A(\vartheta(z))(V^{\e,\zeta}_\sm-v_s(\rho))\bIz(z)
\indiq_{\{u\leq \phi_\e^\gamma(|V^{\e,\zeta}_\sm-v_s(\rho)| )\}}
M(ds,d\rho,dz,du),
$$
where $M$ is a Poisson measure on $[0,T]\times [0,1]\times
\rr_* \times[0,\infty)$ with intensity measure
$ds d\rho dz du$.
These subsitutions are used for technical convenience:
for example, it would have been technically complicated
to use a smooth version of $\indiq_{\{|\theta|\geq \zeta\}}$ (with $\zeta$ small),
while it is easy to build a smooth version of $\indiq_{\{|z|\geq G(\zeta)\}}$
(with $G(\zeta)$ large), 
see also Remark \ref{remsub}
below. 

Consequently, there exists
a standard
Poisson process $J_t^{\e,\zeta}=\sum_{k\geq 1} \indiq_{\{T_k^{\e,\zeta}\leq t\}}$
with rate
$$
\lambda_{\e,\zeta}=\int_0^1 d\rho \int_{-G(\zeta)-1}^{G(\zeta)+1} dz
\int_0^{2\Gamma_\e^\gamma}du  =4 (G(\zeta)+1) \Gamma_\e^\gamma
$$
and a family 
$(\bar R_k^{\e,\zeta},\bar Z_k^{\e,\zeta},\bar U_k^{\e,\zeta})_{k\geq 1}$ of i.i.d. 
$[0,1]\times [-G(\zeta)-1,G(\zeta)+1] \times [0,2\Gamma_\e^\gamma]$-valued 
random variables with
law $\lambda_{\e,\zeta}^{-1} d\rho dz du$ such that,
with the conventions $\sum_1^0=0$ and $T_0^{\e,\zeta}=0$, 
$$
V^{\e,\zeta}_t= V_0 +\sum_{k=1}^{J^{\e,\zeta}_t} A(\vartheta(\bar Z_k^{\e,\zeta}))
\left(V^{\e,\zeta}_{T_{k-1}^{\e,\zeta}}-v_{T_k^{\e,\zeta}}(\bar R_k^{\e,\zeta})\right)
\bIz(Z_k^{\e,\zeta})
\indiq_{\left\{\bar U_k^{\e,\zeta}\leq 
\phi_\e^\gamma\left(
\left|V^{\e,\zeta}_{T_{k-1}^{\e,\zeta}}-v_{T_k^{\e,\zeta}}(\bar R_k^{\e,\zeta})
\right| \right)\right\}}.
$$

For $t\in[0,T]$, $w\in\rr^2$, (recall that $\phi_\e\leq \Gamma_\e$), 
define
\begin{align*}
g_{\e,\zeta}(t,w)&=1- \frac 1 {\lambda_{\e,\zeta}}
\int_0^1 d\rho \int_{-G(\zeta)-1}^{G(\zeta)+1} 
dz \phi_\e^\gamma(|w-v_t(\rho)|) \\
&=1 - \frac 1 {2 \Gamma_\e^\gamma}
\int_0^1 d\rho\; \phi_\e^\gamma(|w-v_t(\rho)|) \in [1/2,1].
\end{align*}
Consider a $C^\infty$ function $\chi:\rr\mapsto [0,1]$ supported
by $(-1,1)$ such that
$\int_{-1}^1\chi(x)dx=1$. Setting
$$
q_{\e,\zeta}(t,w,\rho,z)= g_{\e,\zeta}(t,w)\chi(z-G(\zeta)-3)+
\frac{\phi_\e^\gamma(|w-v_t(\rho)|)}{\lambda_{\e,\zeta}} 
\indiq_{\{|z|\leq G(\zeta)+1\}}
$$
we see that for each $t\in[0,T]$, $w\in \rr^2$,
$q_{\e,\zeta}(t,w,\rho,z) d\rho dz$ 
is a probability measure on $[0,1]\times \rr_*$. 
Since $\chi(z-G(\zeta)-3)=0$ for $|z|\leq G(\zeta)+1$
and $\chi(z-G(\zeta)-3)>0$ implies $|z|> G(\zeta)+1$ and thus $\bIz(z)=0$,
we see that for all $k\geq 0$, all $\psi:\rr^2\mapsto \rr_+$,
\begin{align*}
&\E\left[\left.\psi\left(V_{T_{k+1}^{\e,\zeta}}^{\e,\zeta} \right) \right\vert
V_{T_{k}^{\e,\zeta}}^{\e,\zeta}, T_{k}^{\e,\zeta}, T_{k+1}^{\e,\zeta}
\right]\\
=&\int_0^1 \int_{\rr_*} \psi\left(V_{T_k^{\e,\zeta}}^{\e,\zeta}
+A(\vartheta(z))
(V_{T_k}^{\e,\zeta}-v_{T_{k+1}^{\e,\zeta}}(\rho))\bIz(z)
\right)\phi_\e^\gamma\left(
\left|V^{\e,\zeta}_{T_{k}^{\e,\zeta}}-v_{T_{k+1}^{\e,\zeta}}(\rho)
\right| \right) \frac{d\rho dz}{\lambda_{\e,\zeta}} \\
= &\int_0^1 \int_{\rr_*} \psi\left(V_{T_k^{\e,\zeta}}^{\e,\zeta}
+A(\vartheta(z))
(V_{T_k}^{\e,\zeta}-v_{T_{k+1}^{\e,\zeta}}(\rho))\bIz(z)
\right)
q_{\e,\zeta}(T_{k+1}^{\e,\zeta},V_{T_k^{\e,\zeta}}^{\e,\zeta},\rho,z)d\rho dz.
\end{align*}
Consequently, we can build, on a possibly enlarged probability space, 
a sequence 
$(R_k^{\e,\zeta},Z_k^{\e,\zeta})_{k\geq 1}$ of random variables
such that $V_0^{\e,\zeta}=V_0$ and for all 
$k\in \{0,...,J_T^{\e,\zeta}-1\}$,
\begin{align*}
&V^{\e,\zeta}_t=  V^{\e,\zeta}_{T_{k}^{\e,\zeta}} \quad 
\hbox{for all}\; t\in [T_k^{\e,\zeta}, T_{k+1}^{\e,\zeta}), \\
&V^{\e,\zeta}_{T_{k+1}^{\e,\zeta}}= \sum_{k=1}^{J^{\e,\zeta}_t} 
A(\vartheta(Z_{k+1}^{\e,\zeta}))
(V_{T_{k}}^{\e,\zeta}-v_{T_{k+1}^{\e,\zeta}}(R_{k+1}^{\e,\zeta}))
\bIz(Z_{k+1}^{\e,\zeta}),\\
&\cL\left((R_{k+1}^{\e,\zeta},Z_{k+1}^{\e,\zeta}) \; \vert \;   
V^{\e,\zeta}_{T_{k}^{\e,\zeta}},T_k^{\e,\zeta}, T_{k+1}^{\e,\zeta}\right)
=q_{\e,\zeta}(T_{k+1}^{\e,\zeta},V_{T_k^{\e,\zeta}}^{\e,\zeta},\rho,z)d\rho dz.
\end{align*}
Observe that by construction, we have
$$
V^{\e,\zeta}_t=V_0 + \sum_{k=1}^{J^{\e,\zeta}_t} 
A(\vartheta(Z_{k}^{\e,\zeta}))
(V^{\e,\zeta}_{T_{k-1}^{\e,\zeta}}-v_{T_k^{\e,\zeta}}(R_{k}^{\e,\zeta}))
\bIz(Z_{k}^{\e,\zeta})
$$
for all $t\in [0,T]$. The following observation will 
allow us to handle several computations.

\begin{rk}\label{reverse}
Recall that $M(ds,d\rho,dz,du)$ is a Poisson measure on 
$[0,T]\times[0,1]\times\rr_*\times[0,\infty)$ with intensity measure
$ds d\rho dz du$. For any
$\psi: [0,T]\times\rr^2\times [0,1]\times 
\rr_* \mapsto \rr_+$, any $t\in [0,T]$,
\begin{align*}
&\sum_{k=1}^{J_t^{\e,\zeta}} \psi(T_k^{\e,\zeta},
V^{\e,\zeta}_{T_{k-1}^{\e,\zeta}},R_k^{\e,\zeta},Z_k^{\e,\zeta}) 
\bIz(Z_k^{\e,\zeta})\\
=&  \int_0^t \int_0^1\int_{\rr_*}\int_0^\infty
\psi(s,V^{\e,\zeta}_\sm,\rho,z) \bIz(z)
\indiq_{\{u\leq \phi_\e^\gamma(|V^{\e,\zeta}_\sm-v_s(\rho)|)\}}
M(ds,d\rho,dz,du).
\end{align*}
\end{rk}

We conclude this section with the computation of the
law of $((R_1^{\e,\zeta},Z_1^{\e,\zeta}),...,(R_l^{\e,\zeta},Z_l^{\e,\zeta}))$ .

\begin{rk}\label{loi}
We can write, for each $k\geq 0$,
$$
V_{T_k}^{\e,\zeta}=
\cH_k(V_0,(T_1^{\e,\zeta},R_1^{\e,\zeta},Z_1^{\e,\zeta}),...,
(T_k^{\e,\zeta},R_k^{\e,\zeta},Z_k^{\e,\zeta})),
$$
for some function 
$\cH_k:\rr^2\times (\rr_+\times[0,1]\times \rr_*)^k \mapsto \rr^2$.
Indeed, set $\cH_0(v)=v$ and 
\begin{align*}
&\cH_{k+1}(v,(t_1,\rho_1,z_1),...,
(t_{k+1},\rho_{k+1},z_{k+1}))
=\cH_k(v,(t_1,\rho_1,z_1),...,
(t_k,\rho_k,z_k)))\\
&\hskip2cm+A(\vartheta(z_{k+1}))\left(\cH_k(v,(t_1,\rho_1,z_1),...,
(t_k,\rho_k,z_k)))- v_{t_{k+1}}(\rho_{k+1})\right)\bIz(z_{k+1}).
\end{align*}
Conditionally on $\sigma(V_0,J^{\e,\zeta}_t,t\geq 0)$,
the law of 
$((R_1^{\e,\zeta},Z_1^{\e,\zeta}),...,(R_l^{\e,\zeta},Z_l^{\e,\zeta}))$ 
has the density
$$
\prod_{k=1}^{l}
q_{\e,\zeta }(T_{k}^{\e,\zeta},\cH_{k-1}(V_0,(T_1^{\e,\zeta},\rho_1,z_1),...,
(T_{k-1}^{\e,\zeta},\rho_{k-1},z_{k-1})),
\rho_{k},z_{k}),
$$
with respect to the Lebesgue measure on $([0,1]\times\rr_*)^k$.
\end{rk}

\section{An integration by parts formula}\label{mc}
\setcounter{equation}{0}

The aim of this section is to prove the following integration
by parts formula for $V^{\e,\zeta}_t$. 
Clearly, on the event $\{T_1^{\e,\zeta}>t\}$, $V^{\e,\zeta}_t=V_0$,
so that no regularization may occur.
To avoid this degeneracy,  
we consider $(Z_{-1},Z_0)$ with law $\cN(0,I_2)$ independent
of everything else. We also introduce 
a $C^\infty$ non-decreasing function $\Phi_\e:\rr \mapsto [0,1]$ such that
$\Phi_\e(x)=0$ for $x\leq \Gamma_\e-1$ and $\Phi_\e(x)=1$ for $x\geq \Gamma_\e$.
We may assume that the derivatives of all orders of $\Phi_\e$
are bounded uniformly with respect to $\e\in (0,\e_0)$.
Finally, we consider a $C^\infty$ function $\Psi:\rr\mapsto [0,1]$
such that $\Psi(x)=1$ for $x\leq 1/4$ and $\Psi(x)=0$ for $x\geq 3/4$.
We set 
\begin{equation}\label{dfG}
\Sigma^{\e,\zeta}_t=\Phi_\e(|V_0|) +
\sum_{k=1}^{J^{\e,\zeta}_t} \Phi_\e(|V^{\e,\zeta}_{T_k^{\e,\zeta}}|)
\quad \hbox{and}\quad G^{\e,\zeta}_t=\Psi(\Sigma^{\e,\zeta}_t).
\end{equation}
Observe that since $\sup_{[0,t]}|V^{\e,\zeta}_s|=
\max\{|V_0|,|V^{\e,\zeta}_{T_1^{\e,\zeta}}|,...,|V^{\e,\zeta}_{T_{J_t}^{\e,\zeta}}|\}$,
we have
\begin{equation}\label{obsG}
\indiq_{\{\sup_{[0,t]}|V^{\e,\zeta}_s|\leq \Gamma_\e-1\}} 
\leq G^{\e,\zeta}_t 
\leq \indiq_{\{\sup_{[0,t]}|V^{\e,\zeta}_s|\leq \Gamma_\e\}}. 
\end{equation}

\begin{thm}\label{ipp}
We set $u_\zeta(t):= t\zeta^{4+\nu}$.
For any $\psi \in C^\infty_b(\rr^2,\rr)$,
any $0<t_0\leq t \leq T$, 
any $\kappa\in (1/\eta_0,\delta)$,
any $q\geq 1$, 
any multi-index $\beta\in\{1,2\}^q$,
\begin{align*}
\left|\E\left[\partial_\beta^q\psi\left(\sqrt{u_\zeta(t) }
\begin{pmatrix} Z_{-1} \\ Z_0 \end{pmatrix}+V^{\e,\zeta}_t\right)
G^{\e,\zeta}_t\right] \right|
\leq C_{q,t_0,\kappa}e^{C_{q,\kappa}\Gamma_\e^\gamma} ||\psi||_\infty 
\left[\e^{-q} \zeta^{-\nu q} + e^{-\Gamma_\e^\kappa}\zeta^{-2\nu q}\right] .
\end{align*}
\end{thm}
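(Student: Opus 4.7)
The strategy is to apply the Malliavin calculus for jump processes of Bally--Cl\'ement \cite{bc} to the representation of $V^{\e,\zeta}_t$ developed in Section \ref{cdv}. The Malliavin variables are the jump-angle variables $Z_k^{\e,\zeta}$ (for $1\leq k\leq J_t^{\e,\zeta}$), whose conditional density $q_{\e,\zeta}$ is smooth in $z$ by Remark \ref{loi}, with smooth cutoffs built in through $\bIz$ and $\chi$ so that no boundary terms arise; together with the added Gaussian pair $(Z_{-1},Z_0)$, which is introduced precisely to secure nondegeneracy of the Malliavin matrix on the event $\{J_t^{\e,\zeta}=0\}$ where no jump contributes. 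Throughout we condition on the jump times $T_k^{\e,\zeta}$ and the auxiliary variables $R_k^{\e,\zeta}$.

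First I compute the Malliavin derivatives $\partial_{Z_k^{\e,\zeta}}^j V^{\e,\zeta}_t$ and $\partial_{Z_{-1}}$, $\partial_{Z_0}$ (acting on the added noise) by differentiating the recursion of Remark \ref{loi}. These involve $\vartheta^{(j)}(Z_k^{\e,\zeta})$ (controlled via Lemma \ref{dervartheta}), the jump displacements $V^{\e,\zeta}_{T_{k-1}^{\e,\zeta}}-v_{T_k^{\e,\zeta}}(R_k^{\e,\zeta})$, the linearized flow through the subsequent jumps, and the smooth factor $\bIz(Z_k^{\e,\zeta})$. Forming the Malliavin covariance matrix $\sigma^{\e,\zeta}_t$, the Gaussian contribution yields the deterministic lower bound $\sigma^{\e,\zeta}_t\geq u_\zeta(t)I_2$, hence $\|(\sigma^{\e,\zeta}_t)^{-1}\|\leq u_\zeta(t)^{-1}$ without any invertibility issue. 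The specific choice $u_\zeta(t)=t\zeta^{4+\nu}$ calibrates this bound to what is needed below. The iterated Bally--Cl\'ement integration by parts then produces a duality formula of the form
\[
\E\bigl[\partial_\beta^q\psi\bigl(\sqrt{u_\zeta(t)}Z+V^{\e,\zeta}_t\bigr)G^{\e,\zeta}_t\bigr]=\E\bigl[\psi\bigl(\sqrt{u_\zeta(t)}Z+V^{\e,\zeta}_t\bigr)H^{\e,\zeta}_{\beta,t}\bigr],
\]
where the Malliavin weight $H^{\e,\zeta}_{\beta,t}$ is a universal polynomial combination of $(\sigma^{\e,\zeta}_t)^{-1}$, the first $q+1$ Malliavin derivatives of $V^{\e,\zeta}_t$, the first $q$ logarithmic derivatives of $\bIz q_{\e,\zeta}$, and derivatives of $G^{\e,\zeta}_t$.

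It then remains to bound $\E[|H^{\e,\zeta}_{\beta,t}|]$, and this is split according to whether the trajectory stays inside the cutoff. On the event $\{G^{\e,\zeta}_t>0\}$, inequality (\ref{obsG}) yields $\sup_{[0,t]}|V^{\e,\zeta}_s|\leq\Gamma_\e$, so the linearized flow along the jump chain is bounded by $e^{C\Gamma_\e^\gamma}$; each derivative of $A(\vartheta(\cdot))$ costs at most $\zeta^{-\nu}$ on the effective support (via Lemma \ref{dervartheta}), each logarithmic derivative of $q_{\e,\zeta}$ absorbs at most $\e^{-1}$ (from the lower bound $\phi_\e\geq 2\e$ in the density), and the moments of the Poisson count $J_T^{\e,\zeta}$ of intensity $\lambda_{\e,\zeta}\lesssim\Gamma_\e^\gamma\zeta^{-\nu}$ combine with the previous factors to yield the prefactor $e^{C_{q,\kappa}\Gamma_\e^\gamma}$. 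This produces the first summand $\e^{-q}\zeta^{-\nu q}$. On the complementary event $\{\sup_{[0,t]}|V^{\e,\zeta}_s|>\Gamma_\e\}$ the Gronwall control fails, and I instead use a crude worst-case bound $\zeta^{-2\nu q}$ on $|H^{\e,\zeta}_{\beta,t}|$ (each derivative absorbs at most $\zeta^{-2\nu}$ when flow estimates are dropped), coupled with Chebyshev and Proposition \ref{aprate}(iii), which bounds the probability of this event by $e^{-\Gamma_\e^\kappa}\E[\exp(\sup_{[0,t]}|V^{\e,\zeta}_s|^\kappa)]\leq C_\kappa e^{-\Gamma_\e^\kappa}$; this produces the second summand.

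The main obstacle is the bookkeeping in the last paragraph, and in particular producing the sharp exponents $\nu q$ and $2\nu q$ in $\zeta$: this requires the precise asymptotics of $\vartheta$ and its derivatives from Lemma \ref{dervartheta}, a careful analysis of how $\bIz$ interacts with $q_{\e,\zeta}$ so that the logarithmic derivatives of the density remain controllable near the cutoff $|z|=G(\zeta)$, and the combinatorics of how many jump variables can simultaneously contribute a factor $\zeta^{-\nu}$ through a single integration by parts.
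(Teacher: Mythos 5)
Your overall strategy---apply the Bally--Cl\'ement integration by parts with $F=V^{\e,\zeta}_t+\sqrt{u_\zeta(t)}(Z_{-1},Z_0)$, $G=G^{\e,\zeta}_t$, and then estimate the Malliavin weight factor by factor using the $L$-derivative bounds, the $J_t$ moments, and the exponential moment estimate Proposition \ref{aprate}(iii)---is the same as the paper's. But there is a genuine gap in your treatment of the Malliavin covariance matrix, and it is the heart of the proof, not a technicality.

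You propose to control $\det\sigma(F)^{-(3q-1)}$ by the deterministic bound $\sigma(F)=u_\zeta(t)I+\sigma(V^{\e,\zeta}_t)\geq u_\zeta(t)I_2$, hence $\det\sigma(F)\geq u_\zeta(t)^2$. With $u_\zeta(t)=t\zeta^{4+\nu}$ this yields $\det\sigma(F)^{-(3q-1)}\leq t^{-2(3q-1)}\zeta^{-2(4+\nu)(3q-1)}$, which for $q=1$ already costs $\zeta^{-16-4\nu}$, vastly worse than the $\zeta^{-\nu q}$ in the statement. No choice of the other factors can repair this, and the downstream balancing $\e=|\xi|^{-a}$, $\zeta=|\xi|^{-b}$ in Section \ref{concl} would collapse. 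The paper instead proves Proposition \ref{minodet}, $\E[(\det[u_\zeta(t)I+\sigma(V^{\e,\zeta}_t)])^{-p}]\leq C_{t_0,p}e^{C_p\Gamma_\e^\gamma}$, with essentially no $\zeta$ loss. That estimate is genuinely nondegenerate: it rests on the Laplace-transform bound of Lemma \ref{upperlap}, $\E[\exp(-\xi^*S_t\xi)]\leq C\exp(-ct[|\xi|^{\nu/(2+\nu)}\wedge\zeta^{-\nu}])$, which is built from the geometric Lemma \ref{g} and, crucially, from the uniform lower bound Lemma \ref{minob} on $f_t$, i.e.\ on the fact that the law stays away from Dirac masses. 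The Gaussian add-on $u_\zeta(t)I_2$ is only there to handle the regime $|\xi|^{\nu/(2+\nu)}>\zeta^{-\nu}$ where the $|z|\leq G(\zeta)$ truncation makes the jump contribution saturate; the exponent $4+\nu$ is chosen precisely so that $u_\zeta(t)|\xi|^2\geq t|\xi|^{\nu/(2+\nu)}$ there. Treating the Gaussian as the primary source of nondegeneracy is therefore not a shortcut the paper takes nor one that the numerology allows.

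A secondary misattribution: you say the factor $\zeta^{-\nu q}$ comes from ``each derivative of $A(\vartheta(\cdot))$ costs at most $\zeta^{-\nu}$ on the effective support.'' Lemma \ref{dervartheta}(iv) gives $|(A(\vartheta(z)))^{(k)}|\leq C_k(1+z)^{-1/\nu-1}$, which is bounded uniformly in $\zeta$ on $|z|\leq G(\zeta)+1$; derivatives of $A\circ\vartheta$ are free. In the paper's proof the $\zeta^{-\nu q}$ (and the $\zeta^{-2\nu q}$ in the correction term) come from moments of the Poisson count $J_t$, whose intensity is $\lambda_{\e,\zeta}\lesssim\Gamma_\e^\gamma\zeta^{-\nu}$; these enter both through the $L$-derivative bounds of Proposition \ref{majoder} and through $|G^{\e,\zeta}_t|_q$ via Lemma \ref{encoreun}. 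Your description of the second summand as arising from the event $\{\sup|V^{\e,\zeta}|>\Gamma_\e\}$ is also slightly off: on that event $G^{\e,\zeta}_t=0$ and the contribution vanishes; the exponential smallness comes instead from the intermediate shell $\Gamma_\e-1\leq\sup|V^{\e,\zeta}|\leq\Gamma_\e$ on which the derivatives of $\Phi_\e$ are active, estimated by Cauchy--Schwarz against $\PR[\sup|V^{\e,\zeta}|\geq\Gamma_\e-1]\lesssim e^{-\Gamma_\e^\kappa}$.
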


In the whole section, $\zeta\in(0,1)$ and $\e\in (0,\e_0)$
are fixed.
We set for simplicity $\lambda=\lambda_{\e,\zeta}$,
$T_k=T_{k}^{\e,\zeta}$, $R_k=R_k^{\e,\zeta}$, $Z_k=Z_k^{\e,\zeta}$,
but we track the dependance of all the constants with respect to 
$\e$ and $\zeta$.

\subsection{The Malliavin calculus}

We recall here the Malliavin calculus defined in \cite{bc}. 
This calculus is based on the variables 
$(Z_{k})_{k\geq 1}$
(they correspond to the variables $(V_{k})_{k\geq 1}$ in \cite{bc}). 
The $\sigma $-field with respect to which we will take conditional 
expectations is
$$
\cG=\sigma (V_0, T_{k},R_{k},k\geq 1).
$$ 
The calculus
presented below is slightly different from the one used in \cite{bc}:
there one employes as basic random variables $(R_{k},Z_k)_{k\geq 1}$, 
while here we use only 
$(Z_{k})_{k\geq 1}$. This is because we have no 
informations about the derivability of the coefficients of the equation with 
respect to $\rho$.
We also notice that our coefficients depend on time, 
but since the bounds of the coefficients and of their derivatives are 
uniform with respect to time, the estimates from \cite{bc} 
hold in our framework.

\vip

Recall that $(Z_{-1},Z_0)$ is  independent of everything else and
$\cN(0,I_2)$-distributed. We set
$$
\bZ_t=(Z_{-1},Z_0,Z_1,...,Z_{J_t}).
$$
We now use Remark \ref{loi}.
Conditionally on $\cG$, the law of $\bZ_t$ has the following density
with respect to the Lebesgue measure on $\rr^2 \times
(\rr_*)^{J_t}$: setting $z=(z_{-1},...,z_{J_t})$,
\begin{align*}
p_{\e ,\zeta}(z)= \cW_t  e^{-\frac{|z_{-1}|^2+|z_0|^2}2}\prod_{k=1}^{J_{t}}
q_{\e,\zeta }(T_{k},\cH_{k-1}(V_0,(T_1,R_1,z_1),...,
(T_{k-1},R_{k-1},z_{k-1})),
R_{k},z_{k}),
\end{align*}
the normalization constant 
$$
\cW_t=\left(2\pi \int_{[0,1]^{J_t}} \left[\prod_{k=1}^{J_{t}} 
q_{\e,\zeta }(T_{k},\cH_{k-1}(V_0,(T_1,R_1,z_1),...,
(T_{k-1},R_{k-1},z_{k-1})),
R_{k},z_{k}) \right]dz_1...dz_{J_t} \right)^{-1}
$$ 
being $\cG$-measurable.

\vip

We denote by $U_\zeta:\rr_*\mapsto [0,1]$ a $C^\infty$ function such that
$U_\zeta(z)=1$ for $|z|\in (1,G(\zeta)-1)$
and $U_\zeta(z)=0$ for $|z|\leq 1/2$ and $|z|\geq G(\zeta)-1/2$. 
We may of course
choose $U_\zeta$ in such a way that its derivatives
of all orders are uniformly bounded (with respect to $\zeta$).
Then we define
$$
\pi_{-1}=\pi_0=1,\quad \pi_{k}=U_\zeta(Z_k), \quad k\geq 1.
$$

\begin{rk}\label{remsub}
Notice that $\pi_k$ is smooth with respect to $Z_k$ and that 
all its derivatives are bounded uniformly with respect to $\zeta$. This
is the reason why we used the substition $\theta=\vartheta(z)$ in
the previous section.
\end{rk}

A simple functional is a random variable $F$ of the form
\begin{equation*}
F=h(\omega,(Z_{-1},...,Z_{J_t}))=h(\omega ,\bZ_t)
\end{equation*}
for some $t\geq 0$, some 
$\cG$-measurable $h:\{(\omega,z), \omega\in\Omega,
z \in \rr^2\times (\rr_*)^{J_t(\omega)}\} \mapsto \rr$, 
such that for almost all $\omega \in \Omega$, for all 
$k\in \{-1,...,J_t(\omega)\}$,
$z \mapsto f(\omega, z)$ is smooth with respect to
$z_k$ on the set $\pi_{k}>0$.
For such a functional we define the Malliavin derivatives: for $k\geq -1$,
$$
D_{k}F =\pi_{k}\partial _{z_k}h(\omega ,\bZ_t).
$$

\begin{rk}\label{vsimple}
We notice that Remark \ref{loi} ensures us that $V^{\e,\zeta}_t$ is a simple
functionnal for each $t\in [0,T]$. Indeed,
$\cH_k$ is smooth with respect to $z_l$ for $l\in \{1,...,k\}$ on
$\{z_l\in (-G(\zeta),0)\cup(0,G(\zeta))$, 
which contains $\{\pi_l>0\}$. This explains our choice for $\pi_l$.
\end{rk}

Observe that if $F$ is a simple functional, $D_{k}F$ is also a simple 
functional (in particular because the weights $\pi_{k}$
are smooth functions of $Z$).
Thus for a multi-index $\beta =(k_{1},...,k_{m})$
with length $|\beta|=m$, we may define
\begin{equation*}
D^{\beta }F=D_{k_{m}}...D_{k_{1}}F.
\end{equation*}
For $m\geq 1$, we will use the norm
\begin{equation*}
\vert F\vert _{m}=\vert F\vert +\sum_{1\leq \vert\beta \vert \leq m}
\vert D^{\beta} F \vert.
\end{equation*}
Given a $d$-dimensional random variable $F=(F_{1},...,F_{d})$ we set
$\vert F\vert _{m}=\sum_{i=1}^{d}\left\vert F_{i}\right\vert _{m}$.
The Malliavin covariance matrix of $F$ is defined by
\begin{equation*}
\sigma^{i,j}(F)=\sum_{k=-1}^{J_{t}}D_{k}F_{i}\times D_{k}F_{j},\quad
1\leq i,j \leq d.
\end{equation*}
Finally, we introduce the divergence operator $L$: for 
a simple functional $F$,
\begin{equation*}
LF=-\sum_{k=-1}^{J_{t}} \left[\frac{1}{\pi_k}D_k(\pi_{k}D_{k}F)+D_{k}F\times D_{k}
\log p_{\varepsilon ,\zeta}(\bZ_t) \right].
\end{equation*}
We now are able to state the integration by parts formula obtained in 
\cite[Theorems 1 and 3]{bc}, of which
the assumptions are satisfied. 
Let $G$ and $F=(F_{1},...,F_{d})$ be simple functionals. We suppose
that $\det \sigma (F)\neq 0$ almost surely. Then for every $\psi\in
C_{b}^{\infty }(\rr^{d},\rr)$ and every multi-index $\beta =(\beta _{1},...,
\beta_{q})\in \{1,...,d\}^{q}$, we have
\begin{equation}\label{IP}
\E\left(\partial_{\beta}^q\psi(F)G\right)=
\E\left(\psi(F)K_{\beta,q}(F,G)\right),
\end{equation}
with the following estimate:
\begin{equation}\label{estipoids}
\left\vert K_{\beta,q}(F,G)\right\vert \leq C_{q,d}\frac{\left\vert
G\right\vert_{q}(1+\left\vert F\right\vert _{q+1})^{q(6d+1)}}{\left\vert
\det \sigma (F)\right\vert ^{3q-1}}
\left(1+\sum_{j=1}^q \sum_{k_1+..+k_{j}\leq q-j}
\prod_{i=1}^{j}\left\vert LF\right\vert _{k_{i}}\right).
\end{equation}

\subsection{Lower-bound of the covariance matrix}

The aim of this subsection is to show the following proposition.
We denote by $I$ the identity matrix of $M_{2\times 2}(\rr)$.
As we will see below (see Subsection \ref{potf}), 
the Malliavin covariance matrix of $\sqrt{u_\zeta(t) }
\begin{pmatrix} Z_{-1} \\ Z_0 \end{pmatrix}+V^{\e,\zeta}_t$ is nothing
but $u_\zeta(t) I+\sigma(V^{\e,\zeta}_t)$.

\begin{prop}\label{minodet}
Recall that $u_\zeta(t):=t\zeta^{4+\nu}$. For all $p\geq 1$, all $0<t_0<t<T$,
$$
\E\left[\left(\det \left[u_\zeta(t) I+\sigma(V^{\e,\zeta}_t)
\right]\right)^{-p} \right] 
\leq C_{t_0,p} e^{C_p  \Gamma_\e^\gamma}.
$$
\end{prop}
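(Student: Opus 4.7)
The plan is to compute the Malliavin derivatives of $V^{\e,\zeta}_t$ explicitly, apply the Cauchy--Binet identity to rewrite the determinant as a sum of squared wedge products in $\rr^2$, and then isolate a ``good'' pair of jumps producing a nondegenerate contribution. The r\^ole of the added Gaussian $\sqrt{u_\zeta(t)}(Z_{-1},Z_0)^\top$ is only to provide a safety net $\det \geq u_\zeta(t)^2$ on exceptional configurations.

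First I would perform the derivative computation. For $k\in\{-1,0\}$ only the added Gaussian contributes, and the corresponding rank-one pieces of the Malliavin covariance matrix sum to $u_\zeta(t)\,I$. For $k\geq 1$, Remark \ref{vsimple} gives an explicit polynomial representation of $V^{\e,\zeta}_t$ in the variables $(Z_i)$, smooth on $\{\pi_k>0\}$; a direct induction on the post-$T_k^{\e,\zeta}$ trajectory (using that $\bIz(Z_k)\equiv 1$ on $\{\pi_k>0\}$) yields
\[
D_k V^{\e,\zeta}_t = \pi_k\,\vartheta'(Z_k)\,\mathcal{M}_k\,A'(\vartheta(Z_k))\bigl(V^{\e,\zeta}_{T_{k-1}^{\e,\zeta}}-v_{T_k^{\e,\zeta}}(R_k)\bigr),
\]
where $\mathcal{M}_k=\prod_{i=k+1}^{J_t^{\e,\zeta}}\bigl(I+\bIz(Z_i)A(\vartheta(Z_i))\bigr)$ is the linear propagator from $T_k^{\e,\zeta}$ to $t$. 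Writing $a_k = D_k V^{\e,\zeta}_t$ for $k\geq 1$ together with $a_{-1},a_0$ of norm $\sqrt{u_\zeta(t)}$ along the two coordinate axes, the Cauchy--Binet identity in dimension $2$ reads
\[
\det\bigl(u_\zeta(t)\,I+\sigma(V^{\e,\zeta}_t)\bigr) = \sum_{-1\leq k<l\leq J_t^{\e,\zeta}}(a_k\wedge a_l)^2 \geq u_\zeta(t)^2.
\]

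The heart of the proof is to construct a good event $\cA$ on which $\det\geq c_{t_0}$, with constant uniform in $\zeta,\e$. I would look for two jump indices $k_1<k_2$ with $T_{k_i}^{\e,\zeta}\in[t_0/2,t_0]$ and $|Z_{k_i}|\in(1,2)$: on this range of $Z$, the factors $\pi_{k_i}$ and $|\vartheta'(Z_{k_i})|$ are bounded below by absolute constants, $A'(\vartheta(Z_{k_i}))$ is invertible with bounded inverse, and $|\det\mathcal{M}_{k_i}|\geq c$ since $\det(I+\bIz A(\vartheta(Z)))=\cos^2(\vartheta(Z)/2)\in[c,1]$ on $\{\bIz>0\}$. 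It then suffices to guarantee that the relative velocities $w_{k_i}=V^{\e,\zeta}_{T_{k_i-1}^{\e,\zeta}}-v_{T_{k_i}^{\e,\zeta}}(R_{k_i})$ satisfy $|w_{k_1}\wedge w_{k_2}|\geq c_{t_0}$; together with the above factors this forces $(a_{k_1}\wedge a_{k_2})^2\geq c_{t_0}'$.

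The main obstacle is the probabilistic control of $\cA^c$. The angular selection is cheap: the expected number of jumps in $[t_0/2,t_0]$ with $|Z|\in(1,2)$ equals $\Gamma_\e^\gamma t_0/2$, and a Poisson tail bound gives probability at most $e^{-c_{t_0}\Gamma_\e^\gamma}$ of having fewer than two such jumps. The genuinely delicate point is the directional nondegeneracy $|w_{k_1}\wedge w_{k_2}|\geq c_{t_0}$: one must show that, conditionally on the jump positions, the law of $(R_{k_1},R_{k_2})$ --- governed by the transition density $q_{\e,\zeta}$ of Remark \ref{loi} --- charges with quantitative mass the set of pairs producing two linearly independent vectors. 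This uses decisively that $f_0$, and hence every $f_t$ by conservation of momentum and energy, is not a Dirac mass, equivalently $e_0>0$ by (\ref{centrage}), so that $f_t$ charges at least two linearly independent directions uniformly in $t\in[0,T]$. Combining everything with Proposition \ref{aprate}(iii) to absorb exponential velocity moments into the constants yields
\[
\E[\det^{-p}] \leq c_{t_0}^{-p} + u_\zeta(t)^{-2p}\,\PR(\cA^c) \leq C_{t_0,p}\,e^{C_p\Gamma_\e^\gamma}.
\]
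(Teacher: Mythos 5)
There is a genuine gap in your probabilistic control of $\cA^c$, and it matters because the target bound must be uniform in $\zeta\in(0,1)$.

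Your claim that ``the expected number of jumps in $[t_0/2,t_0]$ with $|Z|\in(1,2)$ equals $\Gamma_\e^\gamma t_0/2$'' miscounts. In the representation of Section~\ref{cdv}, an arrival $T_k$ of the Poisson clock $J^{\e,\zeta}$ (rate $\lambda_{\e,\zeta}=4(G(\zeta)+1)\Gamma_\e^\gamma$) is marked by $(R_k,Z_k)$ drawn from the transition density $q_{\e,\zeta}$ of Remark~\ref{loi}. The conditional probability that $|Z_k|\in(1,2)$ is
$\frac{2}{\lambda_{\e,\zeta}}\int_0^1\phi_\e^\gamma(|V^{\e,\zeta}_{T_{k}-}-v_{T_k}(\rho)|)\,d\rho$,
so the rate of such marked arrivals per unit time is $2\int_0^1\phi_\e^\gamma(\cdot)\,d\rho$, which (using $\phi_\e\leq\Gamma_\e$ and Lemma~\ref{minob}) lies in $[2r_0^\gamma q_0,\,2\Gamma_\e^\gamma]$ and is $O(1)$ on typical paths — not $O(\Gamma_\e^\gamma)$. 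In the ``$M$''-formulation, the arrivals of $M$ in the $z$-band $(1,2)$ are indeed of rate $4\Gamma_\e^\gamma$, but all except a fraction $\phi_\e^\gamma/(2\Gamma_\e^\gamma)\leq 1/2$ of them are rejected by the indicator $u\leq\phi_\e^\gamma(\cdot)$ and therefore do not contribute to $V^{\e,\zeta}$ or to $D_kV^{\e,\zeta}_t$. Either way, $\PR(\cA^c)$ is only bounded by a constant $e^{-c_{t_0}}<1$, not by $e^{-c_{t_0}\Gamma_\e^\gamma}$.

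With a constant bound on $\PR(\cA^c)$, your final inequality reads
$\E[\det^{-p}]\leq c_{t_0}^{-p}+u_\zeta(t)^{-2p}\,\PR(\cA^c)$, and the term $u_\zeta(t)^{-2p}=t^{-2p}\zeta^{-2p(4+\nu)}$ blows up as $\zeta\to0$; the statement of Proposition~\ref{minodet}, however, carries no $\zeta$-dependence on the right-hand side. So even after correcting the rate, the good-event/bad-event dichotomy is too coarse. Enlarging the $z$-band to $(1,G(\zeta)-1/2)$ improves the tail of the bad event to $e^{-c_{t_0}\zeta^{-\nu}}$, but then the guaranteed lower bound on the good-event determinant degrades (jumps near $|z|\approx G(\zeta)$ carry factors $\vartheta'(Z_k)\approx\zeta^{1+\nu}$), and balancing these two $\zeta$-scales against $u_\zeta(t)^{-2p}$ is precisely the delicate point. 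The paper avoids this balancing entirely: it uses the integral representation $|\det A|^{-p}\leq C_p\int|\xi|^{4p-2}e^{-\xi^*A\xi}d\xi$ together with the Laplace-transform bound of Lemma~\ref{upperlap},
$\E[e^{-\xi^*S_t\xi}]\leq C\exp(-ct[|\xi|^{\nu/(2+\nu)}\land\zeta^{-\nu}])$,
and the crucial algebraic observation that the exponent $4+\nu$ in $u_\zeta(t)=t\zeta^{4+\nu}$ is tuned so that $u_\zeta(t)|\xi|^2\geq t|\xi|^{\nu/(2+\nu)}$ exactly on the region $|\xi|^{\nu/(2+\nu)}\geq\zeta^{-\nu}$ where the jump bound saturates. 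That interplay, which requires no identification of good jumps and aggregates contributions over the whole $z$-range continuously, is what makes the final estimate $\zeta$-free.
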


First, we compute the derivatives of $V^{\e,\zeta}_t$ for $t\in[0,T]$.
If we have a family $(M_k)_{k\in\{1,...,j\}}$ in 
$M_{2\times 2}(\rr)$, we 
write $\prod_{k=1}^j M_k=M_j...M_1$.

\begin{lem}\label{derV}
Let $(Y_t)_{t\in[0,T]}$ be the $M_{2\times 2}(\rr)$-valued process defined by
$$
Y_t= \prod_{k=1}^{J_t} \big[I+A(\vartheta(Z_k))\bIz(Z_k) \big]
\quad
\hbox{(with } Y_t=I \hbox{ if }J_t=0\hbox{).}
$$
This process solves 
$$
Y_t=I +\sum_{k=1}^{J_t}  A(\vartheta(Z_k))
\bIz(Z_k)Y_{T_{k-1}}
$$
and $Y_t$ is invertible for all $t\in[0,T]$, because $I+A(\theta)$ is 
invertible for $|\theta|\leq \pi/2$.
Set, for $k\geq 1$,
$$
H_{k}=\vartheta'(Z_k)A'(\vartheta(Z_k))(V_{T_{k-1}}^{\e,\zeta}- v_{T_k}(R_k)).
$$
Then for $k\geq 1$, for $t\in[0,T]$,
$$
D_{k}V^{\e,\zeta}_t= \pi_{k} Y_tY_{T_k}^{-1}H_{k}\indiq_{t\geq T_k} .\\
$$
\end{lem}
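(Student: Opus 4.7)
The plan is to verify the claimed product formula and invertibility of $Y_t$ first, then compute $\partial_{z_k}V^{\e,\zeta}_{T_l}$ for $l\geq k$ by induction on $l-k$, and finally multiply by $\pi_k$ to kill an extraneous boundary term.

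For the preliminary claim about $Y_t$, the ordered product $\prod_{k=1}^{J_t}[I+A(\vartheta(Z_k))\bIz(Z_k)]$ is piecewise constant between the jump times $T_{l-1}$ and $T_l$ and, at time $T_l$, it jumps from $Y_{T_{l-1}}$ to $[I+A(\vartheta(Z_l))\bIz(Z_l)]Y_{T_{l-1}}$, which immediately yields the integral recursion. For invertibility, each factor can be written as $(1-\alpha/2)I+(\alpha/2)R_{\vartheta(Z_l)}$ with $\alpha=\bIz(Z_l)\in[0,1]$; a direct $2\times 2$ computation gives determinant $(1-\alpha/2)^2+(\alpha/2)^2+2(1-\alpha/2)(\alpha/2)\cos\vartheta(Z_l)$, which is bounded below by $1/2$ because $|\vartheta(Z_l)|\leq\pi/2$ forces $\cos\vartheta(Z_l)\geq 0$.

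For the derivative formula, I would argue as follows. By Remark \ref{vsimple} the functional $V^{\e,\zeta}_t$ depends on $Z_k$ only through the jumps at times $T_j$ with $j\geq k$, so $D_kV^{\e,\zeta}_t=0$ for $t<T_k$. For $t\geq T_k$, write $l=J^{\e,\zeta}_t$ and set $W_l=\partial_{z_k}V^{\e,\zeta}_{T_l}$. Differentiating the recursion
\[
V^{\e,\zeta}_{T_l}=V^{\e,\zeta}_{T_{l-1}}+A(\vartheta(Z_l))(V^{\e,\zeta}_{T_{l-1}}-v_{T_l}(R_l))\bIz(Z_l)
\]
in $z_k$ for $l>k$ (noting that $T_l,R_l,Z_l$ are independent of $z_k$) gives $W_l=[I+A(\vartheta(Z_l))\bIz(Z_l)]W_{l-1}$. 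Iterating from $l$ down to $k+1$ produces $W_l=Y_{T_l}Y_{T_k}^{-1}W_k$, and since $V^{\e,\zeta}_t=V^{\e,\zeta}_{T_l}$ and $Y_t=Y_{T_l}$ on the stochastic interval $[T_l,T_{l+1})$, we obtain $\partial_{z_k}V^{\e,\zeta}_t=Y_tY_{T_k}^{-1}W_k$ for $t\geq T_k$.

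The main obstacle, and the reason the formula has exactly $H_k$ and not a boundary correction, is the base case $l=k$. Differentiating the $k$-th jump directly produces
\[
W_k=H_k\,\bIz(Z_k)+A(\vartheta(Z_k))(V^{\e,\zeta}_{T_{k-1}}-v_{T_k}(R_k))\,\bIz'(Z_k),
\]
so a priori there is an unwanted term involving $\bIz'(Z_k)$. This is where the choice of the weights $\pi_k=U_\zeta(Z_k)$ from Remark \ref{remsub} pays off: by construction $\mathrm{supp}(\pi_k)\subset\{|Z_k|\in[1/2,G(\zeta)-1/2]\}$, and on this set $\bIz\equiv 1$ and hence $\bIz'\equiv 0$. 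Therefore $\pi_k\bIz(Z_k)=\pi_k$ and $\pi_k\bIz'(Z_k)=0$, which yields $D_kV^{\e,\zeta}_{T_k}=\pi_k W_k=\pi_k H_k$, matching the claim at $t=T_k$ (since $Y_{T_k}Y_{T_k}^{-1}=I$). Combining with the propagation identity above gives $D_kV^{\e,\zeta}_t=\pi_kY_tY_{T_k}^{-1}H_k\indiq_{t\geq T_k}$, as stated.
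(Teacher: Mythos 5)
Your proof is correct and follows essentially the same route as the paper: reduce to the jump times, differentiate the one-step recursion to propagate the derivative forward, and handle the base case $j=k$ by observing that on the support of $\pi_k$ (equivalently, the set $\pi_k>0$ where $|Z_k|\in(1/2,G(\zeta)-1/2)$) one has $\bIz(Z_k)\equiv 1$, so the $\bIz'$ term never contributes after multiplication by $\pi_k$. The only genuine refinement in your write-up is the invertibility of $Y_t$: the paper informally appeals to invertibility of $I+A(\theta)$, whereas the factors are actually $I+A(\vartheta(Z_l))\bIz(Z_l)$ with $\bIz(Z_l)\in[0,1]$; your determinant computation $(1-\alpha/2)^2+(\alpha/2)^2+2(1-\alpha/2)(\alpha/2)\cos\vartheta(Z_l)\geq 1/2$ covers all values of $\alpha$ and is the cleaner justification.
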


\begin{proof}
Since $V^{\e,\zeta}_t$ and $Y_t$ are constant on $[T_j,T_{j+1})$, it suffices
to check the result for $V^{\e,\zeta}_{T_j}$, for all $j\geq 0$, that is,
on the set $\pi_k>0$ (i.e. $|Z_k| \in [1/2,G(\zeta)-1/2]$),
$$
\partial_{z_k} V^{\e,\zeta}_{T_j}= Y_{T_j}Y_{T_k}^{-1}H_{k}\indiq_{j\geq k}.
$$
Since $V^{\e,\zeta}_{T_j}$ does not depend on $Z_k$ if $j<k$, 
the result is obvious for $j<k$. We now work by induction
on $j\geq k$.
First,
$V^{\e,\zeta}_{T_k}=V^{\e,\zeta}_{T_{k-1}}+A(\vartheta(Z_k))(V^{\e,\zeta}_{T_{k-1}}
-v_{T_k}(R_k,))\bIz(Z_k)$.
Derivating this formula with respect to $z_k$ yields (recall that
$|Z_k|\in [1/2,G(\zeta-1/2)]$ and thus $\bIz(Z_k)=1$),
$$
\partial_{z_k}V^{\e,\zeta}_{T_k}
=\vartheta'(Z_k)A'(\vartheta(Z_k))(V_{T_{k-1}}^{\e,\zeta}-v_{T_k}(R_k))
=Y_{T_k}Y_{T_k}^{-1}H_{k}.
$$
We now assume that the result holds for some $j\geq k$ and we recall 
that due to Section \ref{cdv},
$V^{\e,\zeta}_{T_{j+1}}=V^{\e,\zeta}_{T_j}+ A(\vartheta(Z_{j+1}))
(V^{\e,\zeta}_{T_{j}} -v_{T_{j+1}}(R_{j+1}))
\bIz(Z_{j+1})$. Hence
\begin{align*}
\partial_{z_k}V^{\e,\zeta}_{T_{j+1}}
=& \left(I+A(\vartheta(Z_{j+1}))\bIz(Z_{j+1})
\right)\partial_{z_k}V^{\e,\zeta}_{T_{j}} \\
=& \left(I+A(\vartheta(Z_{j+1}))\bIz(Z_{j+1})
\right)Y_{T_j}Y_{T_k}^{-1}H_{k}=
Y_{T_{j+1}}Y_{T_k}^{-1}H_{k}
\end{align*}
as desired.
\end{proof}

We deduce the following expression.

\begin{lem}\label{exprsig}
For all $t\in[0,T]$, $\sigma(V^{\e,\zeta}_t) = Y_t S_t Y_t^*$, where
$$
S_t:=\sum_{k=1}^{J_t} \pi_{k}^2 
Y_{T_k}^{-1}H_{k} H_k^* (Y_{T_k}^{-1})^*.
$$
\end{lem}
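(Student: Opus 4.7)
The plan is to read off the result directly from the definition of the Malliavin covariance matrix together with the derivative formula of Lemma \ref{derV}. By Remark \ref{vsimple}, $V_t^{\e,\zeta}$ is a simple functional, so the Malliavin matrix is
\[
\sigma(V_t^{\e,\zeta}) \;=\; \sum_{k=-1}^{J_t} \bigl(D_k V_t^{\e,\zeta}\bigr)\bigl(D_k V_t^{\e,\zeta}\bigr)^*.
\]
First I would observe that $V_t^{\e,\zeta}$ does not depend on the auxiliary Gaussian pair $(Z_{-1},Z_0)$ (it was introduced independently of everything else in the opening of Section \ref{mc}), hence $D_{-1}V_t^{\e,\zeta}=D_0V_t^{\e,\zeta}=0$, and only the indices $k\in\{1,\dots,J_t\}$ contribute.

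Next I would substitute the formula $D_k V_t^{\e,\zeta} = \pi_k Y_t Y_{T_k}^{-1} H_k \indiq_{t\geq T_k}$ from Lemma \ref{derV}. For $k\leq J_t$ the indicator is automatically $1$, so
\[
\bigl(D_k V_t^{\e,\zeta}\bigr)\bigl(D_k V_t^{\e,\zeta}\bigr)^*
\;=\; \pi_k^2 \, Y_t\, Y_{T_k}^{-1}\, H_k H_k^*\, \bigl(Y_{T_k}^{-1}\bigr)^*\, Y_t^*.
\]
Since $Y_t$ and $Y_t^*$ do not depend on $k$, they factor out of the sum, giving
\[
\sigma(V_t^{\e,\zeta}) \;=\; Y_t \Bigl(\sum_{k=1}^{J_t} \pi_k^2 \, Y_{T_k}^{-1}\, H_k H_k^*\, \bigl(Y_{T_k}^{-1}\bigr)^*\Bigr) Y_t^* \;=\; Y_t S_t Y_t^*,
\]
which is the claim.

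There is really no substantial obstacle here: the content of the lemma is an algebraic rearrangement once Lemma \ref{derV} is in hand. The only point worth being careful about is checking that $V_t^{\e,\zeta}$ is indeed independent of $Z_{-1},Z_0$ so that those two Malliavin directions contribute nothing to $\sigma(V_t^{\e,\zeta})$ (even though they will contribute the $u_\zeta(t)I$ term when we later consider $\sqrt{u_\zeta(t)}(Z_{-1},Z_0)^* + V_t^{\e,\zeta}$ in Proposition \ref{minodet}).
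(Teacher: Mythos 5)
Your proof is correct and follows essentially the same approach as the paper: apply Lemma \ref{derV} and factor $Y_t$, $Y_t^*$ out of the sum defining the Malliavin covariance matrix. The only extra touch is that you explicitly justify why the indices $k=-1,0$ drop out ($V_t^{\e,\zeta}$ does not depend on $Z_{-1},Z_0$), a point the paper leaves implicit by summing from $k=1$ directly.
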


\begin{proof} Due to Lemma \ref{derV}, we have
\begin{align*}
\sigma(V^{\e,\zeta}_t)=\sum_{k=1}^{J_t} \pi_{k}^2 
\left[Y_tY_{T_k}^{-1}H_{k}\right]\left[Y_tY_{T_k}^{-1}H_{k}\right]^*
= Y_t \left(\sum_{k=1}^{J_t} \pi_{k}^2 
Y_{T_k}^{-1}H_{k} H_k^* (Y_{T_k}^{-1})^*\right) Y_t^*,
\end{align*}
whence the result.
\end{proof}

Next, we prove some estimates concerning $(Y_t)_{t\in[0,T]}$.
\begin{lem}\label{estiz}
Almost surely, for all $t\geq 0$, $|Y_t|\leq 1$. Furthermore, for all
$p\geq 1$,
$$
\E\left[\sup_{[0,T]} |Y_t^{-1}|^p \right] \leq \exp({C_p \Gamma_\e^\gamma}).
$$
\end{lem}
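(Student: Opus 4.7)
My plan is to handle the two claims of the lemma in turn, with the harder part being a quantitative bound on each factor of $Y_t^{-1}$ that is fine enough to survive integration against a Poisson measure with a $\zeta$-dependent intensity.

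For the first claim, I would start from the identity $I + A(\theta) = \tfrac{1}{2}(I + R_\theta)$, whose eigenvalues are $\cos(\theta/2) e^{\pm i\theta/2}$; hence its operator norm equals $|\cos(\theta/2)| \leq 1$ for all $|\theta| \leq \pi/2$. Since $\bIz(Z_k) \in [0,1]$, the factor
\begin{equation*}
M_k := I + A(\vartheta(Z_k))\bIz(Z_k) = (1 - \bIz(Z_k))\,I + \bIz(Z_k)\bigl(I + A(\vartheta(Z_k))\bigr)
\end{equation*}
is a convex combination of two matrices of operator norm at most $1$, hence has operator norm at most $1$. Submultiplicativity applied to the product defining $Y_t$ then yields $|Y_t| \leq 1$ almost surely.

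Before attacking the inverse bound, I would observe that $|Y_t^{-1}|$ is non-decreasing in $t$: from $Y_{T_k} = M_k Y_{T_{k-1}}$ we get $Y_{T_{k-1}}^{-1} = Y_{T_k}^{-1} M_k$, and $|M_k| \leq 1$ gives $|Y_{T_{k-1}}^{-1}| \leq |Y_{T_k}^{-1}|$. Therefore $\sup_{[0,T]}|Y_t^{-1}| = |Y_T^{-1}|$ and it suffices to control a single random variable. The key refined estimate on $M_k^{-1}$ comes from the fact that $M_k = (1 - a_k/2) I + (a_k/2) R_{\theta_k}$ (with $a_k := \bIz(Z_k)$ and $\theta_k := \vartheta(Z_k)$) is normal with complex conjugate eigenvalues, so $|M_k^{-1}|^2 = 1/\det M_k = \bigl(1 - a_k(1 - a_k/2)(1 - \cos \theta_k)\bigr)^{-1}$. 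The quantity $a_k(1-a_k/2)$ is at most $1/2$, so the argument of the logarithm stays in $[1/2,1]$; combining $-\log(1-x) \leq 2x$ on $[0,1/2]$ with $1 - \cos \theta_k \leq \theta_k^2/2$ gives $\log |M_k^{-1}| \leq \tfrac{1}{2}\bIz(Z_k)\vartheta(Z_k)^2$, and submultiplicativity yields
\begin{equation*}
\log |Y_T^{-1}| \;\leq\; \tfrac{1}{2}\sum_{k=1}^{J_T^{\e,\zeta}} \bIz(Z_k)\vartheta(Z_k)^2.
\end{equation*}

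To finish, I would use Remark \ref{reverse} to rewrite this sum as an integral of $\vartheta(z)^2 \bIz(z)$ against the Poisson measure $M(ds,d\rho,dz,du)$, weighted by the indicator $\indiq_{\{u \leq \phi_\e^\gamma(|V_\sm^{\e,\zeta}-v_s(\rho)|)\}}$. Bounding this indicator by $\indiq_{\{u \leq 2\Gamma_\e^\gamma\}}$ removes the dependence on the process $V^{\e,\zeta}$, leaving a purely deterministic functional of $M$ on $[0,T]\times[0,1]\times \rr_* \times [0,2\Gamma_\e^\gamma]$. The Laplace functional of a Poisson integral then produces
\begin{equation*}
\E\bigl[|Y_T^{-1}|^p\bigr] \;\leq\; \exp\!\Bigl(2T\Gamma_\e^\gamma \int_{\rr} \bigl(e^{p\vartheta(z)^2/2} - 1\bigr)\, dz\Bigr).
\end{equation*}
The main obstacle is uniformity in $\zeta$: a crude bound $|M_k^{-1}| \leq \sqrt{2}$ combined with $\lambda_{\e,\zeta} = 4(G(\zeta)+1)\Gamma_\e^\gamma$ would produce an unacceptable factor $e^{C\,G(\zeta)\,\Gamma_\e^\gamma}$ blowing up as $\zeta \to 0$. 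The refined $\vartheta(Z_k)^2$ estimate avoids this: by Lemma \ref{dervartheta}, $\vartheta(z) \leq C(1+|z|)^{-1/\nu}$, and since $\nu < 1/2$, $\vartheta^2 \in L^1(\rr)$ with a $\zeta$-independent norm. Because $\vartheta$ is bounded on $\rr$, we have $e^{p\vartheta(z)^2/2} - 1 \leq C_p \vartheta(z)^2$, so $\int_\rr(e^{p\vartheta(z)^2/2}-1)\, dz \leq C_p$, and the claimed bound $\E[|Y_T^{-1}|^p] \leq e^{C_p \Gamma_\e^\gamma}$ follows.
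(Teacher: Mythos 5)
Your proof is correct and follows essentially the same route as the paper: bound each factor $I+A(\vartheta(Z_k))\bIz(Z_k)$ and its inverse via eigenvalue computations, convert $\log|Y_T^{-1}|$ into a Poisson integral of $\vartheta^2$ via Remark~\ref{reverse}, and apply the Laplace functional together with the integrability of $\vartheta^2$ from Lemma~\ref{dervartheta}. A small plus of your writeup is that the convex-combination observation and the exact formula $|M_k^{-1}|^2 = \bigl(1-\bIz(Z_k)(1-\bIz(Z_k)/2)(1-\cos\vartheta(Z_k))\bigr)^{-1}$ treat the case $\bIz(Z_k)\in(0,1)$ explicitly, whereas the paper only records the bound $|(I+A(\theta))^{-1}|^2\leq e^{\theta^2}$ for $\bIz=1$ and then applies it wholesale; your version also makes the monotonicity of $t\mapsto|Y_t^{-1}|$ explicit, which the paper leaves implicit by bounding everything by $\exp(L_T)$.
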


\begin{proof}
First, an immediate computation shows that 
$$|I+A(\theta)|^2=\sup_{|\xi|=1}
|(I+A(\theta))\xi |^2=\frac{1+\cos \theta}2\leq 1, 
$$
so that $|Y_t|\leq 1$.
Next, one can check that for $\theta\in(-\pi/2,\pi/2)$,
$$
|(I+A(\theta))^{-1}|^2=\frac 2{1+\cos \theta}
\leq 1+\theta^2\leq \exp(\theta^2). 
$$
Thus
for $0 \leq t \leq T$,
\begin{align*}
|Y_t^{-1}|^2\leq & \prod_{k=1}^{J_t} 
|(I+A(\vartheta(Z_k))\bIz(Z_k))^{-1}|^2
\leq \exp \left( \sum_{k=1}^{J_T} \vartheta^2(Z_k)\bIz(Z_k)
\right)
=:\exp(L_T).
\end{align*}
We infer from Remark \ref{reverse} that 
for some Poisson measure $M$ with intensity measure $ds d\rho dz du$,
\begin{align*}
L_T &= \int_0^T \int_0^1 \int_{\rr_*}\int_0^\infty 
|\vartheta(z)|^2\bIz(z) 
\indiq_{\{u \leq \phi_\e^\gamma(|V^{\e,\zeta}_\sm-v_s(\rho)|)\}} M(ds,d\rho,dz,du)\\
&\leq \int_0^T \int_0^1 \int_{\rr_*}\int_0^\infty 
|\vartheta(z)|^2\indiq_{\{u \leq \Gamma_\e^\gamma\}} M(ds,d\rho,dz,du).
\end{align*}
Hence for any $p>0$,
$$
\E\left[\exp(p L_T) \right]\leq \exp\left(\Gamma_\e^\gamma T 
\int_{\rr_*} (e^{p\vartheta^2(z)}-1) dz \right)
\leq  \exp\left(C_p T \Gamma_\e^\gamma \right),
$$
since $\vartheta^2(z)\leq (\pi/2)^2$ and since
$\int_{\rr_*} \vartheta^2(z)dz=\int_{-\pi/2}^{\pi/2} \theta^2b(\theta)d\theta 
< \infty$ by (\ref{cdv2}) and ({\bf A}$(\gamma,\nu)$).
\end{proof}

To bound $S_t$ from below, we need a lower-bound of $f_t$.
Recall (\ref{cdv1}).

\begin{lem}\label{minob}
One may find $r_0>0$ and $q_0>0$ such that
for any $w\in \rr^2$, any $t\in [0,T]$,
$$
f_t(\{v,|v-w|\geq r_0\}) = \int_0^1 \indiq_{\{|v_t(\rho)-w|\geq r_0\}} 
d\rho \geq q_0.
$$
\end{lem}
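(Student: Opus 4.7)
The key observation is that the constraints $\intrd v f_t(dv)=0$ and $\intrd |v|^2 f_t(dv)=e_0>0$ from (\ref{centrage}), preserved in time by (\ref{energy}), force $f_t$ to put a uniform amount of mass away from any point $w$. I would argue by contradiction: assume that, for some $w\in\rr^2$ and $t\in[0,T]$, $f_t(Ball(w,r_0)^c)<q_0$. The starting point is the elementary identity
$$
\intrd |v-w|^2 f_t(dv) = e_0 - 2w\cdot\intrd v f_t(dv) + |w|^2 = e_0 + |w|^2,
$$
which uses exactly the centering and energy conservation. Splitting the integral over $Ball(w,r_0)$ and its complement and applying Cauchy--Schwarz on the tail yields
$$
e_0 + |w|^2 \leq r_0^2 + \sqrt{q_0}\left(\intrd |v-w|^4 f_t(dv)\right)^{1/2}.
$$

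Let $M_4:=\sup_{t\in[0,T]}\intrd |v|^4 f_t(dv)$, which is finite uniformly in $t$ thanks to (\ref{expo}). Using $|v-w|^4\leq 8(|v|^4+|w|^4)$ together with $\sqrt{a+b}\leq\sqrt a+\sqrt b$ then gives
$$
e_0 + |w|^2 \leq r_0^2 + \sqrt{8 q_0}\,\bigl(M_4^{1/2}+|w|^2\bigr).
$$
For $q_0\leq 1/32$ one has $\sqrt{8q_0}\leq 1/2$, so the $|w|^2$ term on the right can be absorbed into the left, leaving
$$
e_0 + \tfrac12 |w|^2 \leq r_0^2 + \sqrt{8 q_0}\,M_4^{1/2}, \qquad \hbox{hence} \qquad e_0 \leq r_0^2 + \sqrt{8 q_0}\,M_4^{1/2}.
$$

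I would then fix $r_0:=\sqrt{e_0}/2$, which is strictly positive by (\ref{centrage}) since $f_0$ is not a Dirac mass, and pick $q_0\in(0,1/32)$ small enough that $\sqrt{8 q_0}\,M_4^{1/2}<e_0/4$. This produces $e_0\leq e_0/4+e_0/4<e_0$, a contradiction; thus the assumed inequality fails and $f_t(Ball(w,r_0)^c)\geq q_0$ uniformly in $w\in\rr^2$ and $t\in[0,T]$. The identification with $\int_0^1\indiq_{\{|v_t(\rho)-w|\geq r_0\}}\,d\rho$ is just (\ref{cdv1}) applied to $\psi(v):=\indiq_{\{|v-w|\geq r_0\}}$. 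The argument is a plain second-moment computation and presents no serious obstacle: the only inputs beyond (\ref{centrage}) are the strict positivity of $e_0$ and the uniform boundedness of some moment of order strictly greater than two, both available here (the latter in abundance from (\ref{expo})).
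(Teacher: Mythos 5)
Your proof is correct, and it is a genuinely different and more elementary argument than the one in the paper. The paper splits into two regimes: for $|w|\geq\sqrt{2e_0}+1$ it uses a Chebyshev-type tail bound on the second moment, and for $(t,w)$ in the remaining compact set it argues that $f_t$ is not a Dirac mass, invokes the weak continuity of $t\mapsto f_t$ (which it reads off from the weak formulation), and then uses compactness of $[0,T]\times\overline{Ball(0,a)}$ to obtain a uniform lower bound. That argument is soft — it yields no explicit constants and leans on a topological step. Your argument dispenses with the case split, the weak-continuity lemma, and the compactness argument entirely: the identity $\intrd|v-w|^2 f_t(dv)=e_0+|w|^2$, Cauchy--Schwarz on the tail, and the uniform fourth-moment bound $M_4<\infty$ (trivially available from (\ref{expo})) give an explicit $r_0=\sqrt{e_0}/2$ and explicit admissible $q_0$ depending only on $e_0$ and $M_4$. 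The absorption of $|w|^2$ for $\sqrt{8q_0}\le 1/2$ is exactly what makes the estimate uniform in $w$, replacing the paper's compactness step. The only cost is that you invoke a moment one order beyond what the paper's Chebyshev step uses, but this is free here. Both proofs rely on the same conceptual input — the normalization (\ref{centrage}) preserved by (\ref{energy}), which in turn encodes that $f_0$ is not a Dirac mass — so the hypotheses are used to the same depth; yours simply extracts a quantitative statement from them directly.

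Two small presentational points. First, having assumed $f_t(Ball(w,r_0)^c)<q_0$ for contradiction, the displayed inequality after Cauchy--Schwarz should read $<$ rather than $\leq$ (this does not affect the argument since you end with a strict contradiction anyway). Second, the identification with $\int_0^1\indiq_{\{|v_t(\rho)-w|\geq r_0\}}\,d\rho$ via (\ref{cdv1}) requires extending (\ref{cdv1}) from nonnegative test functions $\psi$ to indicators by monotone approximation, which is routine but worth a word since (\ref{cdv1}) as stated is for $\psi:\rr^2\to\rr_+$; alternatively, note that (\ref{cdv1}) simply says $\rho\mapsto v_t(\rho)$ pushes Lebesgue measure on $[0,1]$ forward to $f_t$, so the identity for indicators is immediate.
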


\begin{proof}
Recall that by (\ref{centrage}), we have $\intrd |v|^2f_t(dv)=e_0>0$ and
$\intrd vf_t(dv)=0$.
First, we observe that for all $w$ such that $|w|\geq \sqrt{2e_0}+1=:a$, 
we have
$$
f_t(\{v,|v-w|\geq 1\})\geq f_t(\{v, |v|\leq |w|-1\})
= 1- f_t(\{v, |v|> |w|-1\})\geq 1- e_0/(|w|-1)^2\geq 1/2.
$$
Thus it suffices to prove the result for $(t,w)\in [0,T]\times 
\overline{Ball(0,a)}$.
We notice that for each $t\geq 0$, $f_t$ is not a Dirac mass. Indeed,
since $\intrd vf_t(dv)=0$, the only possible Dirac mass is
$\delta_0$, but this would imply $\intrd |v|^2f_t(dv)=0$. 

As a consequence, we can find, for each 
$(t,w)\in [0,T]\times \overline{Ball(0,a)}$,
some numbers $r_{t,w}>0$ and $q_{t,w}>0$ such that 
$f_t(\{v,|v-w|\geq r_{t,w}\})\geq q_{t,w}$. 

Now we prove that for each $(t,w)\in [0,T]\times \overline{Ball(0,a)}$,
we can find a neighborhood $\cV_{t,w}$ of $(t,w)$ such that for all 
$(t',w')\in\cV_{t,w}$, $f_{t'}(\{v,|v-w'|\geq r_{t,w}/2\})\geq q_{t,w}/2$.
To do so, we first observe that 
it is clear from
Definition \ref{dfws} that $t\mapsto f_t$ is weakly continuous. Hence
for all continuous-bounded function $\varphi:\rr \mapsto\rr_+$,
$(t',w')\mapsto \intrd \varphi(|w'-v|)f_{t'}(dv)$ is continuous. 
Consider now a continuous-bounded 
nonnegative function $\varphi:\rr_+\mapsto \rr_+$
such that $\indiq_{\{x\geq r_{t,w}\}} 
\leq \varphi \leq \indiq_{\{x\geq r_{t,w}/2\}}$. By continuity, there is a
neighborhood $\cV_{t,w}$ of $(t,w)$ such that for all 
$(t',w')\in\cV_{t,w}$, there holds $\intrd \varphi(|w'-v|)f_{t'}(dv)\geq
\frac 1 2\intrd \varphi(|w-v|)f_{t}(dv)$, which implies
\begin{align*}
f_{t'}(\{v,|v-w'|\geq r_{t,w}/2\}) 
\geq& \frac 1 2 
f_{t}(\{v,|v-w|\geq r_{t,w}\}) \geq q_{t,w}/2.
\end{align*}

Since
$[0,T]\times \overline{Ball(0,a)}$ is compact, we can find a finite covering 
$[0,T]\times \overline{Ball(0,a)}\subset \cup_{i=1}^n \cV_{t_i,w_i}$. We conclude
choosing $r_0 = \min(r_{t_i,w_i}/2) \land 1$ and 
$q_0= \min(q_{t_i,w_i}/2) \land (1/2)$.
\end{proof}

We carry on with some basic but fundamental considerations.

\begin{lem}\label{g}
For $\xi \in \rr^2$, $X\in\rr^2$, consider
$$
I(\xi,X)=\left\{\theta\in[-\pi/2,\pi/2],\; \lc \xi, (I+A(\theta))^{-1}
A'(\theta) X \rc^2 \geq 
\theta^2 |X|^2 |\xi|^2/128\right\}.
$$
For any $\xi,X\in\rr^2$, we always have either $(0,\pi/2]\subset I(\xi,X)$
or $[-\pi/2,0)\subset I(\xi,X)$.
\end{lem}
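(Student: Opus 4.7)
The approach is to exploit the half-angle structure of rotations and reduce the geometric inequality to a single trigonometric fact. First, I would simplify the operator $(I+A(\theta))^{-1}A'(\theta)$ using the identities $I+A(\theta)=\tfrac12(I+R_\theta)=\cos(\theta/2)R_{\theta/2}$ and $A'(\theta)=\tfrac12 R_\theta J$, where $J$ denotes rotation by $\pi/2$. Combining these gives
$$
(I+A(\theta))^{-1}A'(\theta)X = \frac{1}{2\cos(\theta/2)}\, R_{\theta/2}\,JX.
$$
Then I would write $\lc\xi, R_{\theta/2}JX\rc = |\xi|\,|X|\cos(\alpha-\theta/2)$, where $\alpha$ is the oriented angle between $\xi$ and $JX$ (the trivial case $\xi=0$ or $X=0$ being obvious). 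So the desired inequality reduces to
$$
\frac{\cos^2(\alpha-\theta/2)}{4\cos^2(\theta/2)} \geq \frac{\theta^2}{128}.
$$

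The second step is to locate the zeros of $\cos(\alpha-\theta/2)$ inside $[-\pi/2,\pi/2]$. They occur where $\alpha-\theta/2=\pi/2+k\pi$; since $\theta$ ranges over an interval of length $\pi$, at most one such zero $\theta^*$ lies in $[-\pi/2,\pi/2]$. Consequently $\theta^*$ belongs to exactly one of the closed half-intervals $[-\pi/2,0]$ or $[0,\pi/2]$, and I would select the opposite open half-interval as the one on which the bound will be proved (and when there is no zero at all, either half works).

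Finally, on the chosen side, the crucial identity is $|\cos(\alpha-\theta/2)|=\sin(|\theta-\theta^*|/2)$, and since $\theta$ and $\theta^*$ have opposite signs, $|\theta-\theta^*|/2 \geq |\theta|/2$, which lies in $[0,\pi/2]$. Using the standard concavity bound $\sin(x)\geq 2x/\pi$ on $[0,\pi/2]$, one obtains $|\cos(\alpha-\theta/2)|\geq |\theta|/\pi$, so
$$
\frac{\cos^2(\alpha-\theta/2)}{4\cos^2(\theta/2)} \;\geq\; \frac{\theta^2}{4\pi^2} \;>\; \frac{\theta^2}{128},
$$
since $4\pi^2<128$. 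This gives the desired inclusion, with room to spare in the constant.

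The step I expect to require the most care is the second one: correctly establishing the uniqueness of the zero of $\cos(\alpha-\theta/2)$ in $[-\pi/2,\pi/2]$ and reading off on which half-interval it lies, so that the remaining half can be used. Everything else is either a matrix computation or the standard $\sin(x)\geq 2x/\pi$ estimate.
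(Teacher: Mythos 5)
Your half-angle factorization $(I+A(\theta))^{-1}A'(\theta)=\frac{1}{2\cos(\theta/2)}\,R_{\theta/2}J$ is correct and yields a clean reduction to a single trigonometric inequality; this is a genuinely different route from the paper's, which expands $\langle\xi,(I+A(\theta))^{-1}A'(\theta)X\rangle^2$ into three terms, chooses the sign of $\theta$ so the cross term has a favorable sign, and then invokes the dichotomy $\langle\xi,X\rangle^2\ge 1/2$ or $\langle\xi,PX\rangle^2\ge 1/2$. In your main case, where the unique zero $\theta^*$ of $\cos(\alpha-\theta/2)$ lies in $[-\pi/2,\pi/2]$, the chain $|\cos(\alpha-\theta/2)|=\sin(|\theta-\theta^*|/2)\ge\sin(|\theta|/2)\ge|\theta|/\pi$ on the opposite half is valid, since there $|\theta-\theta^*|\le\pi$ and $\sin$ is increasing on $[0,\pi/2]$.

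The gap is the parenthetical claim that when there is no zero of $\cos(\alpha-\theta/2)$ in $[-\pi/2,\pi/2]$, ``either half works.'' This is false. Choose $\alpha$ so that the zero nearest the origin is $\theta^*=\pi/2+\epsilon$ with $\epsilon>0$ small; then no zero lies in $[-\pi/2,\pi/2]$, yet at $\theta=\pi/2$ one has $|\cos(\alpha-\theta/2)|=\sin(\epsilon/2)$, and the target inequality $\cos^2(\alpha-\theta/2)/(4\cos^2(\theta/2))\ge\theta^2/128$ becomes $\sin^2(\epsilon/2)/2\ge\pi^2/512$, which fails for small $\epsilon$; so $(0,\pi/2]\not\subset I(\xi,X)$, and only $[-\pi/2,0)$ is contained. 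To repair this, take $\theta^*$ to be the zero of $\cos(\alpha-\theta/2)$ nearest the origin in $(-\pi,\pi]$ (not restricted to $[-\pi/2,\pi/2]$) and work on the half with sign opposite to $\theta^*$. Then $|\theta-\theta^*|\ge|\theta|$ still holds, but $|\theta-\theta^*|/2$ can now reach $3\pi/4$, so the monotonicity of $\sin$ is no longer available when $|\theta-\theta^*|/2>\pi/2$; in that sub-case one uses instead $\sin(|\theta-\theta^*|/2)\ge\sin(3\pi/4)=1/\sqrt{2}>1/2\ge|\theta|/\pi$. With this adjustment the proof goes through, but as written the no-zero case is not actually handled.
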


\begin{proof}
We may assume, by homogeneity, that $|X|=|\xi|=1$.
We have 
\begin{align*}
&(I+A(\theta))^{-1}A'(\theta)=\frac 1 2 \begin{pmatrix}
\frac{-\sin\theta}{1+\cos\theta} & -1 \\ 1 & \frac{-\sin\theta}{1+\cos\theta}
\end{pmatrix}=: \frac 1 2 \left[\frac{-\sin\theta}{1+\cos\theta}I+P \right],\\
&\lc \xi, (I+A(\theta))^{-1}
A'(\theta) X \rc^2 =\frac 1 4 \left[ \frac{\sin^2\theta}{(1+\cos\theta)^2} 
\lc \xi, X\rc^2 +\lc \xi, P X\rc^2  - 2\frac{\sin\theta}{1+\cos\theta} 
\lc \xi, X\rc\lc \xi, P X\rc
\right].
\end{align*}
Since $\lc X, PX \rc=0$ and $|X|=|\xi|=1$, 
we always have either $\lc \xi, X\rc^2 \geq 1/2$
or  $\lc \xi, PX \rc^2 \geq 1/2$. Thus for all
$\theta$ such that $\lc \xi, X\rc\lc \xi, P X\rc\sin\theta 
\leq 0$ (this holds either on $[0,\pi/2]$ or on $[-\pi/2,0]$),
$$ 
\lc \xi, (I+A(\theta))^{-1}
A'(\theta) X \rc^2 \geq \frac 1 8 \min\left[ 
\frac{\sin^2\theta}{(1+\cos\theta)^2}, 1\right] \geq
\frac {\sin^2\theta} {32}.
$$
We easily conclude, since $|\sin\theta| \geq |\theta|/2$ on $[-\pi/2,\pi/2]$.
\end{proof}

We deduce the following estimate.

\begin{lem}\label{upperlap}
There are some constants $c>0$, $C>0$ such that
for all $\xi\in \rr^2$, all $t\in [0,T]$,
\begin{align*}
\E[\exp(- \xi^* S_t  \xi)]\leq C 
\exp\left(-c t [|\xi|^{\nu/(2+\nu)}\land \zeta^{-\nu}]\right).
\end{align*}
\end{lem}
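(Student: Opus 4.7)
The plan is to lower-bound each summand $X_k := \pi_k^2\langle\xi, Y_{T_k}^{-1}H_k\rangle^2$ of $\xi^* S_t\xi = \sum_{k=1}^{J^{\e,\zeta}_t} X_k$ (Lemma \ref{exprsig}), after averaging over a single jump $(R_k,Z_k)$, by a \emph{deterministic} quantity $\alpha(\xi)$, and then to exploit the Poisson distribution of $J^{\e,\zeta}_t$.

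Write $\eta_k = (Y_{T_{k-1}}^{-1})^*\xi$ and $W_k(\rho) = V^{\e,\zeta}_{T_{k-1}}-v_{T_k}(\rho)$. Since $\bIz(Z_k)=1$ on the support of $U_\zeta(Z_k)$ and $Y_{T_k}^{-1} = Y_{T_{k-1}}^{-1}(I+A(\vartheta(Z_k)))^{-1}$ there, one has
\[
X_k = U_\zeta(Z_k)^2\, \vartheta'(Z_k)^2\, \bigl\langle \eta_k,\, (I+A(\vartheta(Z_k)))^{-1} A'(\vartheta(Z_k))\, W_k(R_k)\bigr\rangle^2.
\]
Lemma \ref{estiz} forces $|\eta_k|\geq |\xi|$, and combining Lemma \ref{g} with the asymptotic $\vartheta'(z)^2\vartheta(z)^2 \asymp z^{-2-4/\nu}$ from Lemma \ref{dervartheta} yields a universal constant $c>0$ such that, for each $\rho$, there is a sign $\sigma_k(\rho)\in\{\pm1\}$ with the property: whenever $|W_k(\rho)|\geq r_0$ and $\sigma_k(\rho)\,z \in [1, L]$ with $L := \min\bigl((c|\xi|r_0)^{\nu/(2+\nu)},\,G(\zeta)-1\bigr)$, the value of $X_k$ obtained by setting $R_k=\rho$, $Z_k=z$ is at least $c$ (the exponent $\nu/(2+\nu)$ arises precisely by solving $z^{-2-4/\nu}|\xi|^2 r_0^2 \asymp 1$).

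The main step is the averaging. Introduce the filtration $\cF_k^* := \sigma(V_0, (T_j)_{j\geq 1}, (R_j,Z_j)_{j\leq k})$. The Markov structure of the construction in Section \ref{cdv} implies that, conditionally on $\cF_{k-1}^*$, the pair $(R_k,Z_k)$ has density $q_{\e,\zeta}(T_k, V^{\e,\zeta}_{T_{k-1}}, \rho, z)\,d\rho\,dz$, equal to $\phi_\e^\gamma(|W_k(\rho)|)/\lambda_{\e,\zeta}$ on $\{|z|\leq G(\zeta)+1\}$ and crucially independent of $z$ there. Integrating the pointwise bound above against this density, using $\phi_\e^\gamma(|W_k(\rho)|)\geq r_0^\gamma$ on $\{|W_k(\rho)|\geq r_0\}$ (for $\e$ small), and invoking Lemma \ref{minob} to ensure $\int_0^1 \indiq_{|W_k(\rho)|\geq r_0}\,d\rho\geq q_0$ \emph{uniformly in $V^{\e,\zeta}_{T_{k-1}}$}, one obtains the deterministic lower bound
\[
\E\bigl[X_k\wedge 1 \,\big|\, \cF_{k-1}^*\bigr] \;\geq\; \alpha(\xi) \;:=\; \frac{c\, q_0\, r_0^\gamma\,(L-1)}{\lambda_{\e,\zeta}} \;\asymp\; \frac{\min(|\xi|^{\nu/(2+\nu)},\,\zeta^{-\nu})}{\lambda_{\e,\zeta}},
\]
valid when $L\geq 2$; the opposite regime (small $|\xi|$ or large $\zeta$) makes the claimed inequality trivial up to the constant $C$. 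The main obstacle lies precisely in achieving a \emph{deterministic} bound: had $R_k$ been placed in the conditioning, the best estimate would depend on the $\cF_{k-1}^*$-measurable (hence random) quantity $V^{\e,\zeta}_{T_{k-1}}$, and the iteration below through a random horizon would be much more delicate. Keeping both $R_k$ and $Z_k$ free and invoking Lemma \ref{minob} uniformly in $V^{\e,\zeta}_{T_{k-1}}$ is what resolves this.

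The conclusion is elementary. From $e^{-x}\leq 1-(x\wedge 1)/2$ we deduce $\E[e^{-X_k}\mid\cF_{k-1}^*]\leq 1 - \alpha(\xi)/2 \leq e^{-\alpha(\xi)/2}$. Since this upper bound is deterministic, iterating over $k=1,\dots,J^{\e,\zeta}_t$ yields $\E[\exp(-\xi^* S_t\xi)\mid \cF_0^*] \leq e^{-J^{\e,\zeta}_t\,\alpha(\xi)/2}$ with $\cF_0^* = \sigma(V_0,(T_j)_{j\geq 1})$. As $J^{\e,\zeta}_t\sim\mathrm{Poisson}(\lambda_{\e,\zeta}t)$, one concludes $\E[e^{-\xi^*S_t\xi}] \leq \exp\bigl(\lambda_{\e,\zeta}t(e^{-\alpha(\xi)/2}-1)\bigr)$. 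Finally $\alpha(\xi)\leq c/\Gamma_\e^\gamma$ is uniformly bounded, so $e^{-\alpha/2}-1\leq -c'\alpha$, and $\lambda_{\e,\zeta}\alpha(\xi)\asymp\min(|\xi|^{\nu/(2+\nu)},\zeta^{-\nu})$ (using $G(\zeta)\asymp\zeta^{-\nu}$), giving the announced exponential decay.
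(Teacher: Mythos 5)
Your proof is correct, and the core estimates are the same as the paper's: Lemma \ref{g} for the non-degeneracy of the jump direction, Lemma \ref{estiz} to force $|\eta_k|\geq|\xi|$, Lemma \ref{dervartheta} for the decay $\vartheta(z)^2\vartheta'(z)^2\asymp z^{-2-4/\nu}$ (which is exactly how the exponent $\nu/(2+\nu)$ emerges), and Lemma \ref{minob} for a lower bound on the ``good'' jump rate that is uniform in the current state $V^{\e,\zeta}_{T_{k-1}}$ — and you correctly identify that this uniformity is what lets the iteration close. Where you diverge from the paper is the machinery used for the final exponential bound. The paper lower-bounds $\xi^* S_t\xi$ by a Poisson stochastic integral $\tfrac{|\xi|^2r_0^2}{128}L_t$ via Remark \ref{reverse}, applies the It\^o formula to $e^{-xL_t}$, and after integrating over $(\rho,z,u)$ (using Lemmas \ref{g} and \ref{minob}) obtains a linear differential inequality for $\E[e^{-xL_t}]$ in $t$, which it integrates. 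You instead condition on the jump skeleton $\cF_0^*=\sigma(V_0,(T_j)_{j\geq 1})$, which makes $J_t^{\e,\zeta}$ deterministic, invoke the explicit conditional density from Remark \ref{loi} to get the deterministic bound $\E[e^{-X_k}\mid\cF_{k-1}^*]\leq e^{-\alpha(\xi)/2}$, iterate over the $J_t^{\e,\zeta}$ marks, and finish with the Poisson moment generating function. Your discrete route avoids the It\^o bookkeeping and the predictability argument for $\xi_{s-}$; the paper's continuous-time route avoids having to condition on the skeleton and spell out the conditional law. Both yield the same rate, and the computations are essentially the same estimate in two guises.
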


\begin{proof}
Recalling Lemmas \ref{derV}, \ref{exprsig}, the definition of $\pi_k$ and
using that $Y_{T_k}=(I+A(\vartheta(Z_k)))Y_{T_{k-1}}$ on $\pi_k>0$
(because $\pi_k>0$ implies $\bIz(Z_k)=1$),
we see that
\begin{align*}
&\xi^*S_t \xi=\sum_{k=1}^{J_t} \pi_k^2 \lc Y_{T_k}^{-1}H_k , \xi\rc^2 
=\sum_{k=1}^{J_t} \pi_k^2 \lc (I+A(\vartheta(Z_k)))^{-1}
H_k ,(Y_{T_{k-1}}^{-1})^* \xi\rc^2 \\
&\geq \sum_{k=1}^{J_t} \indiq_{\{|Z_k|\in[1/2,G(\zeta)-1/2]\}}
(\vartheta'(Z_k))^2\lc(I+A(\vartheta(Z_k)))^{-1}
A'(\vartheta(Z_k))(V^{\e,\zeta}_{T_{k-1}}-v_{T_k}(R_k)), \xi_{T_{k-1}}\rc^2,
\end{align*}
where $\xi_{t}:=(Y_{t}^{-1})^*\xi$. 
We observe that a.s.,  
$|\xi_t|\geq |\xi|$ because $|Y_t|\leq 1$ by Lemma \ref{estiz}.
We splitted $Y_{T_k}=(I+A(\vartheta(Z_k)))Y_{T_{k-1}}$ in order to make rigorous
the stochastic calculus below ($\xi_{T_{k-1}}$ will be predictable).
We recall that $r_0$ and $q_0$ were defined in Lemma 
\ref{minob}. Thus, due to Lemma \ref{g},
\begin{align*}
\xi^*S_t \xi
\geq & \sum_{k=1}^{J_t} \indiq_{\{|Z_k|\in[1/2,G(\zeta)-1/2]\}}
\indiq_{\{\vartheta(Z_k)\in I(\xi_{T_{k-1}}, V^{\e,\zeta}_{T_{k-1}}-v_{T_k}(R_k))\}}
\indiq_{\{|V^{\e,\zeta}_{T_{k-1}}-v_{T_k}(R_k))|\geq r_0\}}\\
&\hskip8cm \times 
\frac{(\vartheta'(Z_k))^2 \vartheta^2(Z_k) r_0^2 |\xi_{T_{k-1}}|^2 }{128}
\\
\geq & \frac{|\xi|^2r_0^2}{128}\sum_{k=1}^{J_t} 
\indiq_{\{|Z_k|\in[1/2,G(\zeta)-1/2]\}}
\indiq_{\{\vartheta(Z_k)\in I(\xi_{T_{k-1}}, V^{\e,\zeta}_{T_{k-1}}-v_{T_k}(R_k))\}}\\
&\hskip6cm \times \indiq_{\{|V^{\e,\zeta}_{T_{k-1}}-v_{T_k}(R_k))|\geq r_0\}}
(\vartheta'(Z_k))^2 \vartheta^2(Z_k)
\\
=& \frac{|\xi|^2r_0^2}{128} \int_0^t \int_0^1\int_{\rr_*} 
\int_0^\infty  \vartheta^2(z)(\vartheta'(z))^2 
\indiq_{\{|z|\in[1/2,G(\zeta)-1/2]\}}
\indiq_{\{\vartheta(z) \in I(\xi_{\sm},V^{\e,\zeta}_\sm-v_s(\rho))\}} \\
&\hskip5cm  \indiq_{\{|V_{\sm}^{\e,\zeta}-v_s(\rho)|\geq r_0\}}
\indiq_{\{u\leq \phi_\e^\gamma(|V_\sm^{\e,\zeta}-v_s(\rho)|)\}}
M(ds,d\rho,dz,du),
\end{align*}
where $M$ is a Poisson measure on $[0,T]\times [0,1]\times \rr_* 
\times [0,\infty)$
with intensity measure $ds d\rho dz du $. We used Remark \ref{reverse}. 
Since  
$\phi_\e^\gamma(x)\geq r_0^\gamma$ for $x>r_0$ we get
$\xi^*S_t\xi \geq \frac{|\xi|^2r_0^2}{128} L_t$,
where
\begin{align*}
L_t:= \int_0^t \int_0^1\int_{\rr_*} 
\int_0^\infty  \vartheta^2(z)(\vartheta'(z))^2 
\indiq_{\{|z|\in[1/2,G(\zeta)-1/2]\}}
\indiq_{\{\vartheta(z) \in I(\xi_{\sm},V^{\e,\zeta}_\sm-v_s(\rho))\}} \hskip2cm \\
\indiq_{\{|V_{\sm}^{\e,\zeta}-v_s(\rho)|\geq r_0\}}\indiq_{\{u\leq r_0^\gamma\}}
M(ds,d\rho,dz,du).
\end{align*}
Using the It\^o formula for jump processes, taking expectations and
differentiating with respect to time,
we get, for $x>0$, 
\begin{align*}
\frac{d}{dt}\E\left[e^{-x L_t}\right]=& -\int_0^1\int_{\rr_*} 
\int_0^\infty \E\Big[ e^{-x L_t}\left(1-e^{-x \vartheta^2(z)(\vartheta'(z))^2}
\right) \indiq_{\{|z|\in[1/2,G(\zeta)-1/2]\}}\\
&\hskip3cm \indiq_{\{\vartheta(z) \in I(\xi_t,V^{\e,\zeta}_t-v_t(\rho))\}}
\indiq_{\{|V_{t}^{\e,\zeta}-v_t(\rho)|\geq r_0\}}\indiq_{\{u\leq r_0^\gamma\}}
\Big] du dz d \rho.
\end{align*}
The integration with respect to $u$ is explicit. Using Lemma 
\ref{g}, we see that the set 
$\{\vartheta(z) \in I(\xi_t,V^{\e,\zeta}_t-v_t(\rho))\}$ a.s. contains
$\{\vartheta(z)\in (0,\pi/2)\}=\{z\in(0,\infty)\}$ 
or $\{\vartheta(z)\in (-\pi/2,0)\}=\{z\in(-\infty,0)\}$. Since 
$(\vartheta\vartheta')^2$
is even, this yields
\begin{align*}
\frac{d}{dt}\E\left[e^{-x L_t}\right]\leq &- 
r_0^\gamma \int_0^1\int_{1/2}^{G(\zeta)-1/2} 
\E\Big[ e^{-x L_t}\left(1-e^{-x \vartheta^2(z)(\vartheta'(z))^2}
\right) \indiq_{\{|V_{t}^{\e,\zeta}-v_t(\rho)|\geq r_0\}}
\Big] dz d \rho.
\end{align*}
Finally we use Lemma \ref{minob} to deduce
\begin{align*}
\frac{d}{dt}\E\left[e^{-x L_t}\right]\leq & -
\left(r_0^\gamma q_0 \int_{1/2}^{G(\zeta)-1/2} 
\left(1-e^{-x \vartheta^2(z)(\vartheta'(z))^2}\right) dz \right) \E\left[ e^{-x L_t}
\right].
\end{align*}
Since $L_0=0$, this implies 
\begin{align*}
\E\left[e^{-x L_t}\right]\leq & \exp\left( -
t r_0^\gamma q_0 \int_{1/2}^{G(\zeta)-1/2} 
\left(1-e^{-x \vartheta^2(z)(\vartheta'(z))^2}\right) dz \right).
\end{align*}
Recalling that $\xi^*S_t\xi \geq \frac{|\xi|^2r_0^2}{128} L_t$,
we get 
\begin{align*}
\E[\exp(-\xi^* S_t \xi)] \leq \exp\left(-t r_0^\gamma q_0 
\int_{1/2}^{G(\zeta)-1/2}
\left(1-e^{-|\xi|^2r_0^2\vartheta^2(z)(\vartheta'(z))^2/128} \right) dz \right).
\end{align*}
We observe that due to ({\bf A}$(\gamma,\nu)$),
$$
G(\zeta)-1/2 \geq c (\zeta^{-\nu}-(\pi/2)^{-\nu})-1/2\geq c\zeta^{-\nu}
$$ 
for $\zeta>0$ small enough. By Lemma \ref{dervartheta},
we have $\vartheta^2(z)(\vartheta'(z))^2\geq c (1+z)^{-4/\nu-2}
\geq c z^{-4/\nu-2}$ for $z\geq 1/2$. We thus have
\begin{align*}
\E[\exp(-\xi^* S_t \xi)] \leq \exp\left(-t r_0^\gamma q_0 
\int_{1/2}^{c\zeta^{-\nu}}
\left(1-e^{-c |\xi|^2 z^{-4/\nu-2}} \right) dz \right).
\end{align*}
But for $z< |\xi|^{\nu/(2+\nu)}$, we have 
$|\xi|^2 z^{-4/\nu-2}\geq 1$, whence 
$1-e^{-c |\xi|^2 z^{-4/\nu-2}}\geq 1-e^{-c}$. Consequently, 
\begin{align*}
\E[\exp(-\xi^* S_t \xi)] \leq \exp\left(-c t 
\left((c\zeta^{-\nu}) \land |\xi|^{\nu/(2+\nu)} -1/2\right)\right).
\end{align*}
The conclusion follows.
\end{proof}

We are finally able to conclude this subsection.

\begin{preuve} {\it of Proposition \ref{minodet}.}
We recall that due to \cite[p 92]{bgj}, for all $p\geq 1$, 
there is a constant $C_p$
such that for all nonnegative symmetric $A\in M_{2\times 2}(\rr)$,
\begin{equation*}\label{bgj}
|\det A|^{-p} \leq C_p\int_{\xi \in \rr^2} |\xi|^{4p-2}e^{-\xi^*A\xi}d\xi.
\end{equation*} 
We set $d_t=\det(u_\zeta(t)I+ \sigma(V^{\e,\zeta}_t))$.
Using Lemma \ref{exprsig}, we have $\sigma(V^{\e,\zeta}_t)=Y_tS_tY_t^*$,
whence
$d_t=\det^2(Y_t) \det(u_\zeta(t)(Y_t^*Y_t)^{-1}+
S_t)$. Lemma \ref{estiz} and the Cauchy-Schwarz inequality yield
\begin{align*}
\E[d_t^{-p}] \leq& \E\left[\det(Y_t)^{-2p} 
\det\left(u_\zeta(t) (Y_t^*Y_t)^{-1}+S_t\right)^{-p}\right]\\
\leq& e^{C_p\Gamma_\e^\gamma} 
\E\left[\det\left(u_\zeta(t)(Y_t^*Y_t)^{-1}+S_t\right)^{-2p} 
\right]^{1/2}.
\end{align*}
Thus due to (\ref{bgj}) and Lemma \ref{upperlap},
since $\xi^* (Y_t^*Y_t)^{-1} \xi = |(Y_t^{-1})^*\xi |^2 \geq |\xi|^2$ by Lemma 
\ref{estiz},
\begin{align*}
&\E[d_t^{-p}] \leq C_pe^{C_p\Gamma_\e^\gamma} \left(\int_{|\xi|\in\rr^2}
|\xi|^{8p-2} e^{-u_\zeta(t)|\xi|^2}\E\left[e^{-\xi^*S_t\xi} \right] d\xi \right)^{1/2}
\\
&\leq
C_pe^{C_p\Gamma_\e^\gamma} \left(\int_{|\xi|\in\rr^2} |\xi|^{8p-2} 
\exp\left(-u_\zeta(t)|\xi|^2 -c t [|\xi|^{\nu/(2+\nu)}\land \zeta^{-\nu}]
\right) d\xi \right)^{1/2}\\
&\leq
C_pe^{C_p\Gamma_\e^\gamma} \left(\int_{|\xi|\in\rr^2} |\xi|^{8p-2} 
\exp\left(-c t |\xi|^{\nu/(2+\nu)}\right) d\xi \right)^{1/2}.
\end{align*}
To get the last inequality, observe that
if $|\xi|^{\nu/(2+\nu)}\geq \zeta^{-\nu}$, then $|\xi|^{2-\nu/(2+\nu)}
\geq \zeta^{-4-\nu}$, so that
$$
u_\zeta(t)|\xi|^2=t\zeta^{4+\nu} |\xi|^2
= t \zeta^{4+\nu}|\xi|^{\nu/(2+\nu)}|\xi|^{2-\nu/(2+\nu)}
\geq t |\xi|^{\nu/(2+\nu)}.
$$
Thus for $0<t_0<t<T$, we have
$$
\E[d_t^{-p}] \leq C_{t_0,p} e^{C_p\Gamma_\e^\gamma}
$$
as desired.
\end{preuve}

\subsection{Upper-bounds of the derivatives}

This subsection is devoted to the following estimates.

\begin{prop}\label{majoder}
For all $l\geq 1$, all $p\geq 1$,
\begin{align*}
&\E\left(\indiq_{\{\sup_{[0,T]}|V^{\e,\zeta}_s|\leq \Gamma_\e\}}
\sup_{[0,T]} |V^{\e,\zeta}_s|_l^p \right) \leq C_{l,p} 
e^{C_{l,p} \Gamma_\e^\gamma},\\
&\E\left(\indiq_{\{\sup_{[0,T]}|V^{\e,\zeta}_s|\leq \Gamma_\e\}}
\sup_{[0,T]} |L V^{\e,\zeta}_s|_l^p \right) \leq C_{l,p} 
\frac{e^{C_{l,p}\Gamma_\e^{\gamma}}}{\e^{p(l+1)} \zeta^{\nu p}} .
\end{align*}
\end{prop}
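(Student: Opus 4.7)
My plan is to prove both bounds by induction on the order of differentiation, starting from the explicit representation of $V^{\e,\zeta}_t$ in terms of the marks $(R_k,Z_k)$ from Section \ref{cdv} together with Lemma \ref{derV}. The role of the cutoff $\indiq_{\{\sup_{[0,T]}|V^{\e,\zeta}_s|\leq\Gamma_\e\}}$ is pathwise: it bounds $|V^{\e,\zeta}_{T_{k-1}}-v_{T_k}(R_k)|$, and hence the source $H_k$, by $C(\Gamma_\e+|v_{T_k}(R_k)|)$; the velocity factors then contribute only polynomial moments via (\ref{expo}).

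For (i), I iterate Lemma \ref{derV}: for each multi-index $\beta=(k_1,\ldots,k_l)$, $D^\beta V^{\e,\zeta}_t$ is a finite combinatorial sum of products of (a) matrices $Y_{T_j}Y_{T_i}^{-1}$ forming chains of length at most $l+1$, (b) weights $\pi_{k_i}$ together with derivatives of $\vartheta$, $A$ and $\bIz$ evaluated at the $Z_{k_i}$'s, and (c) at most one factor $V^{\e,\zeta}_{T_j}-v_{T_j}(R_j)$. On $\{\pi_{k_i}>0\}\subset\{|Z_{k_i}|\in[1/2,G(\zeta)-1/2]\}$, Lemma \ref{dervartheta} furnishes uniform-in-$\zeta$ bounds on $|\vartheta^{(m)}(Z_{k_i})|$; the $Y$-chain is controlled by $|Y_t|\leq 1$ and $\E[\sup|Y_s^{-1}|^p]\leq e^{C_p\Gamma_\e^\gamma}$ (Lemma \ref{estiz}). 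The number of multi-indices to sum is at most $C_l J_T^l$, and since $J_T$ is Poisson with rate $\lambda_{\e,\zeta}=4(G(\zeta)+1)\Gamma_\e^\gamma\leq C\zeta^{-\nu}\Gamma_\e^\gamma$, any fixed moment of $J_T$ is absorbed in $e^{C_{l,p}\Gamma_\e^\gamma}$. A Hölder argument assembling all factors then yields (i).

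For (ii), I first rewrite (with $\pi_k'$ denoting $U_\zeta'(Z_k)$ for $k\geq 1$)
$$LV^{\e,\zeta}_t = -\sum_{k=-1}^{J_t}\left[D_k^2 V^{\e,\zeta}_t + \pi_k'\, D_k V^{\e,\zeta}_t + D_k V^{\e,\zeta}_t\cdot D_k\log p_{\e,\zeta}(\bZ_t)\right].$$
The first two contributions are of the type treated in (i), since $\pi_k'$ is bounded uniformly in $\zeta$ by construction. For the third term, Remark \ref{loi} writes $\log p_{\e,\zeta}$ as a sum over $j\geq 1$ of $\log q_{\e,\zeta}(T_j,\cH_{j-1},R_j,z_j)$; on $\{\pi_k>0\}\cap\{|z_j|\leq G(\zeta)+1\}$ the relevant piece is $\log\phi_\e^\gamma(|\cH_{j-1}-v_{T_j}(R_j)|)-\log\lambda_{\e,\zeta}$, and since $\phi_\e\geq 2\e$ and $|\phi_\e'|\leq 1$, its derivative in any $z_k$-direction is $O(\e^{-1})$. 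Iterating Malliavin derivatives on $LV^{\e,\zeta}_t$ and applying Faà di Bruno, each extra derivative either produces one more factor bounded by $C\e^{-1}$ (from higher derivatives of $\log\phi_\e^\gamma$) or hits an internal $\cH_{j-1}$, generating a further sum of size $O(J_T)$. Accumulating at most $l+1$ factors $\e^{-1}$ (one from $L$, $l$ more from the induction) gives the $\e^{-p(l+1)}$ factor. The extra index sums contribute a polynomial in $J_T$ whose $p$-th moment is $O(\lambda_{\e,\zeta}^p)=O(\Gamma_\e^{\gamma p}\zeta^{-\nu p})$; absorbing the $\Gamma_\e$-powers into $e^{C_{l,p}\Gamma_\e^\gamma}$ leaves exactly the announced $\zeta^{-\nu p}$.

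The main obstacle is the systematic bookkeeping of these iterated Malliavin derivatives of $\log p_{\e,\zeta}$: one must verify precisely that the two alternatives (adding a $\phi_\e'/\phi_\e$ factor, or adding an extra $\cH_{j-1}$-sum) jointly produce no more than $l+1$ powers of $\e^{-1}$ and no more than one effective power of $\zeta^{-\nu}$ beyond the $\Gamma_\e^\gamma$-powers absorbed in the exponential. A secondary subtlety is to check that the $\pi_k^{-1}$ appearing in the definition of $L$ does not blow up: since $D_k$ itself contains a factor $\pi_k$, the combination $\pi_k^{-1}D_k(\pi_k D_k F)$ reduces to a polynomial in $\pi_k,\pi_k',\partial_{z_k}F,\partial_{z_k}^2F$, all bounded uniformly in $\zeta$, so that no inverse weight survives.
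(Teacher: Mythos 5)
The paper's proof is not an "induction on the order of differentiation from scratch"; it invokes three structural lemmas from \cite{bc} (Lemmas 7, 11, 12) that provide a priori controls on $|V^{\e,\zeta}_s|_l$ and $|LV^{\e,\zeta}_s|_l$ in terms of sums such as $\sum_{k\leq J_t}\bar c_\e^l(T_k,R_k,Z_k)$, and then estimates the moments of those sums directly. Your plan to redo the combinatorics by hand is legitimate in principle, but as written it has a gap that would destroy the first inequality.

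The gap: for $|V^{\e,\zeta}_s|_l$ you count ``at most $C_l J_T^l$ multi-indices,'' bound each contribution by a quantity uniform in $\zeta$, and then claim that ``any fixed moment of $J_T$ is absorbed in $e^{C_{l,p}\Gamma_\e^\gamma}$.'' That last step is false: $J_T$ is Poisson with rate $\lambda_{\e,\zeta}\sim\Gamma_\e^\gamma\zeta^{-\nu}$, so $\E[J_T^p]\sim\Gamma_\e^{\gamma p}\zeta^{-\nu p}$, and the $\zeta^{-\nu p}$ factor cannot be hidden inside $e^{C\Gamma_\e^\gamma}$. Your route would thus give $\E[\sup|V|_l^p]\lesssim e^{C\Gamma_\e^\gamma}\zeta^{-\nu lp}$, which is not what the first inequality asserts. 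What is really needed, and what both \cite{bc} and the paper exploit, is the \emph{decay} in Lemma \ref{dervartheta}: $|\vartheta^{(m)}(z)|\lesssim(1+|z|)^{-1/\nu-1}$, so that $\bar c_\e^l(t,\rho,z)\lesssim\Gamma_\e(1+|z|)^{-1/\nu}(1+|v_t(\rho)|)$. The sum $\sum_k\bar c_\e^l(T_k,R_k,Z_k)$ is then controlled by the weighted random variable $X_t=\sum_k(1+|Z_k|)^{-1/\nu}(1+|v_{T_k}(R_k)|)$, whose moments are bounded by $e^{C\Gamma_\e^\gamma}$ \emph{independently of $\zeta$} precisely because $\int(1+|z|)^{-1/\nu}dz<\infty$. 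You mention the uniform-in-$\zeta$ bound on $\vartheta^{(m)}$ but never use its decay as a summability device; replacing ``count $J_T^l$ terms'' by ``bound the $l$-fold weighted sum by $X_t^l$ and estimate $\E[X_t^p]$'' is the missing idea.

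The same issue recurs, more mildly, in your treatment of $|LV^{\e,\zeta}_s|_l$. There a single power of $J_T$ is unavoidable — it comes from the sum over $j$ in $D_k\log p_{\e,\zeta}=\sum_j D_k\log q_{\e,\zeta}(T_j,\cdot)$, whose summands have no decay in $j$ — and that one power is exactly what produces the announced $\zeta^{-\nu p}$ (the paper takes $\E[(1+J_t)^{2p}]^{1/2}$). But the ``further sums'' you pick up when a derivative hits an internal $\cH_{j-1}$ must again be handled via the decaying weights $\bar c_\e^{l}$, not by yet more factors of $J_T$; otherwise you accumulate $\zeta^{-\nu p(l+1)}$ rather than $\zeta^{-\nu p}$. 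Your rewriting $LF=-\sum_k\bigl[D_k^2F+\pi_k'D_kF+D_kF\,D_k\log p_{\e,\zeta}\bigr]$ and the observation that $\pi_k^{-1}$ cancels are both correct; the $\e^{-(l+1)}$ count from $\log\phi_\e^\gamma$ is also in the right spirit. It is the $J_T$-versus-weighted-sum bookkeeping that must be fixed throughout before the argument is sound.
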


\begin{proof} We will use the estimates from \cite[Section 4]{bc}.
In \cite{bc}, the coefficients are bounded. But, as long as we are on the set 
$\{\sup_{[0,T]}\left\vert V_{s}^{\e,\zeta}\right\vert \leq \Gamma
_{\e}\}$, we do not need to take a supremum over all $w\in \rr^2$.
For a function $\psi=[0,\infty)\times \rr^2\times [0,1]\times \rr_* 
\mapsto \rr$ (or $\mapsto \rr^2$)
which is infinitely differentiable with respect to $z \in \rr_*$
and to $w \in \rr^2$, we set, for $\e\in (0,\e_0)$, $l\geq 1$,
\begin{align*}
&\bar \psi_\e^l(t,\rho,z):= \sup_{\{|w|\leq \Gamma_\e\}}
\sum_{0\leq|\beta|+k\leq l} 
|\partial^\beta_w \partial^k_z \psi(t,w,\rho,z)|.
\end{align*}
Let
$c(t,w,\rho,z)=A(\vartheta(z))(w-v_t(\rho))\bIz(z)$,
for which $\sup_{w\in\rr^2}|\nabla_w c(t,w,\rho,z)|= |A(\vartheta(z))|\bIz(z)$.
Due to \cite[Lemma 7]{bc}, we know that
\begin{align*}
Y_l(t):=&\indiq_{\{\sup_{[0,t]}|V^{\e,\zeta}_s|\leq \Gamma_\e\}} \sup_{[0,t]} 
|V^{\e,\zeta}_s|_l \\
\leq& \indiq_{\{\sup_{[0,t]}|V^{\e,\zeta}_s|\leq \Gamma_\e\}} \sup_{[0,t]} |V^{\e,\zeta}_s|
+ C_l \left(1+ \sum_{k=1}^{J_t} 
\bar c^l_\e(T_k,R_k,Z_k)\right)^{l\times l!} 
\sup_{[0,t]} \left(\cE_s\right)^{l\times l!},
\end{align*}
where $$
\cE_t=1+C_l \sum_{k=1}^{J_t} |A(\vartheta(Z_k))|\bIz(Z_k)\cE_{T_k-}
=\prod_{k=1}^{J_t}(1+C_l|A(\vartheta(Z_k))|\bIz(Z_k)).
$$
First, we prove exactly as in Lemma \ref{estiz} that for all
$p\geq 1$, $0\leq t \leq T$,
$$
\E\left[\sup_{[0,t]} \cE_s^p \right] \leq e^{C_{p,l} \Gamma_\e^\gamma}. 
$$
Due to Lemma \ref{dervartheta}, since $|A(\theta)|\leq|\theta|$ 
and since the derivatives of $\bIz$
are bounded uniformly with respect to $\zeta$, we have
$\bar c^l_\e(t,\rho,z)\leq 
C_l (1+|z|)^{-1/\nu}(\Gamma_\e+|v_t(\rho)|)\leq C_l\Gamma_\e
(1+|z|)^{-1/\nu}(1+|v_t(\rho)|)$.
We thus have, using the Cauchy-Schwarz inequality,
\begin{align*}
\E\left[Y_l(t)^p\right]\leq& C_{p} \Gamma_\e^p + C_{p,l}e^{C_{p,l}\Gamma_\e^\gamma}
\Gamma_\e^{pl\times l!}\E\left[ 1+
\left(\sum_{k=1}^{J_t} (1+|Z_k|)^{-1/\nu}(1+|v_{T_k}(R_k)|) \right)^{2 pl\times l!}
\right]^{1/2}\\
\leq & C_{p,l}e^{C_{p,l}\Gamma_\e^\gamma}\E\left[ 1+ X_t^{2 pl\times l!}\right]^{1/2},
\end{align*}
where $X_t:=\sum_{k=1}^{J_t}(1+|Z_k|)^{-1/\nu}(1+|v_{T_k}(R_k)|)$. We now prove
that for any $p\geq 1$, $\E[X_t^p]\leq C_pe^{C_{p}\Gamma_\e^\gamma}$,
which will end the proof of the first inequality.
Using Remark \ref{reverse},
one may find a Poisson measure $M$ on $[0,T]\times [0,1]\times \rr_* 
\times [0,\infty)$
with intensity measure $ds d\rho dz du$
such that 
\begin{align*}
X_t=&\intot \int_0^1 \int_{\rr_*} 
\int_0^\infty  (1+|z|)^{-1/\nu}(1+|v_s(\rho)|) \indiq_{\{u\leq
\phi_\e^\gamma(|V^{\e,\zeta}_\sm-v_t(\rho)|)\}}
\bIz(z) M(ds,d\rho,dz,du) \\
\leq&\intot \int_0^1\int_{\rr_*} 
\int_0^\infty  (1+|z|)^{-1/\nu}(1+|v_s(\rho)|) \indiq_{\{u\leq
\Gamma_\e^\gamma\}} M(ds,d\rho,dz,du)=:\tX_t.
\end{align*}
A simple computation shows that
\begin{align*}
\E[\tX_t^p]\leq& \Gamma_\e^\gamma
\intot ds \int_0^1 d\rho \int_{\rr_*} dz 
\E\left[(\tX_s+ (1+|z|)^{-1/\nu}(1+|v_s(\rho)|))^p-\tX_s^p \right]\\
\leq & C_p \Gamma_\e^\gamma
\intot ds \int_0^1 d\rho \int_{\rr_*} dz (1+|z|)^{-1/\nu}(1+|v_s(\rho)|)
\E\left[1+ \tX_s^p+ |v_s(\rho)|^p\right].
\end{align*}
Since $\int_{\rr_*}(1+|z|)^{-1/\nu}dz<\infty$ and since
$\int_0^1 |v_t(\rho)|^q d\rho=\int_{\rr^2} |v|^q f_t(dv) \leq C_q$ for all
$q\geq 1$ due to (\ref{expo}), we conclude that 
$\E[\tX_t^p]\leq C_p\Gamma_\e^\gamma\int_0^t \E[\tX_s^p]ds 
+C_{p}\Gamma_\e^\gamma$, whence $\E[\tX_t^p]\leq C_{p}\Gamma_\e^\gamma
e^{C_{p}\Gamma_\e^\gamma} \leq C_{p} e^{C_{p}\Gamma_\e^\gamma}$ by the Gronwall
Lemma. This ends the proof of the first inequality.

\vip

We now prove the second inequality.
We use \cite[Lemmas 11 and 12]{bc}.
We introduce the functions
\begin{align*}
&g(t,w)=1- \frac 1 {\lambda_{\e,\zeta}}
\int_0^1 d\rho \int_{\rr_*} dz \indiq_{\{|z| < G(\zeta)+1\}} 
\phi_\e^\gamma(|w-v_t(\rho)|)=1- \frac 1 {2\Gamma_\e^\gamma}
\int_0^1 d\rho \;\phi_\e^\gamma(|w-v_t(\rho)|) ,\\
&h(t,w,\rho)=\phi_\e^\gamma(|w-v_t(\rho)|).
\end{align*}
Then by \cite[Lemma 11]{bc}, for $k=1,...,J_t$,
\begin{align*}
|L Z_k|_l \leq C_l\Big(&\overline{(\log h)}^{l+1}_\e(T_k,R_k)
\\
&+(1+\sup_{[0,t]}|V_s^{\e,\zeta}|_{l+1})^{l+1}\sum_{j=k+1}^{J_t} 
[\overline{(\log g)}_\e^{l+1}(T_j) + 
\overline{(\log h)}^{l+1}_\e(T_j,R_j)] ) \Big).
\end{align*}
Making use of Lemma \ref{rphie}-(ii), one easily checks that
$\overline{(\log h)}^{l}_\e(t,\rho) \leq C_l \e^{-l}$ and
that that for any multi-index
$q=(q_1,...,q_l)\in\{1,2\}^l$, 
$|\partial_q^l g_\e(t,w)| \leq C_l \Gamma_\e^{-1}\e^{\gamma-l}$. Hence,
using the Faa di Bruno formula (\ref{fdb})
and the fact that $g_\e(t,w)\geq 1/2$,
$$
\overline{(\log g)}^{l}_\e(t) \leq C_l \e^{\gamma-l}.
$$
Thus for $k=1,...,J_t$,
$$
|L Z_k|_l \leq C_l \e^{-l-1}
\left(1+ \sup_{[0,t]}|V_s^{\e,\zeta}|_{l+1}\right)^{l+1} (1+J_t).
$$
We now infer from \cite[Lemma 12]{bc} that
\begin{align*}
\sup_{[0,t]} | L V^{\e,\zeta}_s|_l \leq
C_l\left(1+\sup_{k=1,...,J_t}|L Z_k|_l\right)
\left(1+\sum_{k=1}^{J_t}\bar c_\e^l(T_k,R_k,Z_k) \right)^{l+1} \\
\times \left(1+ \sup_{[0,t]}|V_s^{\e,\zeta}|_{l+1}^{l+2} \right)^{l+1} 
\sup_{[0,t]} \cE_s^{l+1}
\end{align*}
Using the above estimates, we can upperbound 
$\sup_{[0,t]} | L V^{\e,\zeta}_s|_l$ with
\begin{align*}
C_l\e^{-l-1} (1+J_t)\left(1+ \sup_{[0,t]}|V_s^{\e,\zeta}|_{l+1}^{(l+1)(l+3)}\right)
\left(1+\Gamma_\e\sum_{k=1}^{J_t}|\vartheta(Z_k)|(1+|v_{T_k}(R_k)|) \right)^{l+1}
\sup_{[0,t]} \cE_s^{l+1}.
\end{align*}
Thus using the Cauchy-Schwarz inequality and similar
arguments as in the proof of the first inequality, we get
$$
\E\left[\sup_{[0,t]} | L V^{\e,\zeta}_s|_l^p\right]\leq
C_{l,p} \e^{-p(l+1)} e^{C_{l,p} \Gamma_\e^\gamma} \E\left[(1+J_t)^{2p}\right]^{1/2}.
$$
Recall now that
$J_t$ is a Poisson process with rate $\lambda=\lambda_{\e,\zeta}=4(G(\zeta)+1)
\Gamma_\e^\gamma \leq C \Gamma_\e^\gamma \zeta^{-\nu}$
by ({\bf A}$(\gamma,\nu)$).
Hence $\E[J_t^p]
\leq C_p (\lambda_{\e,\zeta}T+(\lambda_{\e,\zeta}T)^p) 
\leq C_p \Gamma_\e^{\gamma p}
\zeta^{-\nu p}$. The second inequality follows.
\end{proof}

\subsection{Proof of the formula}\label{potf}

We prove a final lemma to compute the norm of $G^{\e,\zeta}_t$.

\begin{lem}\label{encoreun}
Recall (\ref{dfG}). For all $l\geq 1$, all $t\in [0,T]$,
$$
|G^{\e,\zeta}_t|_l\leq C_l
\indiq_{\{\sup_{[0,t]}|V^{\e,\zeta}_s|\leq \Gamma_\e \}}
\left[1+ \indiq_{\{\sup_{[0,t]}|V^{\e,\zeta}_s|\geq \Gamma_\e-1 \}}
(1+J_t)^l(\sup_{[0,t]}|V^{\e,\zeta}_s|_l^l )^l
\right].
$$
\end{lem}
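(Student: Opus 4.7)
The plan is to estimate the $l$-th Malliavin norm of $G^{\e,\zeta}_t=\Psi(\Sigma^{\e,\zeta}_t)$ by a careful application of the Faa di Bruno formula together with the support properties of $\Psi$ and $\Phi_\e$. First, for the zero-th order term $|G^{\e,\zeta}_t|$, the upper bound in \eqref{obsG} immediately gives $|G^{\e,\zeta}_t|\leq \indiq_{\{\sup_{[0,t]}|V^{\e,\zeta}_s|\leq \Gamma_\e\}}$, which accounts for the leading $1$ in the bracketed expression.

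For $1\leq|\beta|\leq l$, I would apply Faa di Bruno to write $D^\beta\Psi(\Sigma^{\e,\zeta}_t)$ as a sum of terms of the form $\Psi^{(j)}(\Sigma^{\e,\zeta}_t)\prod_{i=1}^j D^{\alpha_i}\Sigma^{\e,\zeta}_t$ with $1\leq j\leq|\beta|$ and $\sum_i|\alpha_i|=|\beta|$. The key observation, which produces both indicator functions in the statement, is that $\Psi^{(j)}$ vanishes outside $[1/4,3/4]$ for $j\geq 1$: if $\Sigma^{\e,\zeta}_t<3/4<1$, then each summand $\Phi_\e(|V^{\e,\zeta}_{T_k^{\e,\zeta}}|)$ (including the $k=0$ contribution $\Phi_\e(|V_0|)$) must lie in $[0,1)$, which by the definition of $\Phi_\e$ forces $|V^{\e,\zeta}_{T_k^{\e,\zeta}}|<\Gamma_\e$ for every $k$, i.e.\ $\sup_{[0,t]}|V^{\e,\zeta}_s|\leq\Gamma_\e$. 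Moreover, $D^{\alpha_i}\Sigma^{\e,\zeta}_t\neq 0$ with $|\alpha_i|\geq 1$ requires some $\Phi_\e^{(j)}(|V^{\e,\zeta}_{T_k^{\e,\zeta}}|)\neq 0$, which only happens when $|V^{\e,\zeta}_{T_k^{\e,\zeta}}|\in[\Gamma_\e-1,\Gamma_\e]$; hence the additional indicator $\indiq_{\{\sup_{[0,t]}|V^{\e,\zeta}_s|\geq\Gamma_\e-1\}}$ appears.

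It then remains to bound each factor $D^{\alpha_i}\Sigma^{\e,\zeta}_t$. Since $\Sigma^{\e,\zeta}_t$ is a sum of at most $1+J^{\e,\zeta}_t$ terms, I would write
$$
D^{\alpha_i}\Sigma^{\e,\zeta}_t=\sum_{k=0}^{J^{\e,\zeta}_t}D^{\alpha_i}\bigl[\Phi_\e(|V^{\e,\zeta}_{T_k^{\e,\zeta}}|)\bigr]
$$
(with the convention $T_0^{\e,\zeta}=0$, $V^{\e,\zeta}_{T_0^{\e,\zeta}}=V_0$). Applying Faa di Bruno twice—first to the outer composition with $\Phi_\e$, whose derivatives are bounded uniformly in $\e$ by assumption, and then to the inner composition $v\mapsto|v|$ (whose derivatives are bounded since $|V^{\e,\zeta}_{T_k^{\e,\zeta}}|\geq\Gamma_\e-1\geq 1$ on the relevant support)—one estimates each $|D^{\alpha_i}\Phi_\e(|V^{\e,\zeta}_{T_k^{\e,\zeta}}|)|$ by a universal constant times a polynomial of degree at most $l$ in $|V^{\e,\zeta}_{T_k^{\e,\zeta}}|_l$. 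Summing over $k$ and using that the sum has at most $1+J^{\e,\zeta}_t$ non-zero terms, one gets $|D^{\alpha_i}\Sigma^{\e,\zeta}_t|\leq C_l(1+J^{\e,\zeta}_t)\sup_{[0,t]}|V^{\e,\zeta}_s|_l^l$ (with a slack factor on the exponent to absorb the two Faa di Bruno steps). Multiplying the $j\leq l$ factors and summing over the finitely many combinatorial terms produced by the outer Faa di Bruno yields the advertised product $(1+J^{\e,\zeta}_t)^l(\sup_{[0,t]}|V^{\e,\zeta}_s|_l^l)^l$.

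The main (though purely combinatorial) obstacle is keeping track of the chain of Faa di Bruno expansions and showing that the final bound does not depend on $\e$ or $\zeta$ apart from through the indicators and through $J^{\e,\zeta}_t$ and $\sup|V^{\e,\zeta}|_l$; this relies crucially on the uniform-in-$\e$ control of the derivatives of $\Phi_\e$ and on the fact that derivatives of $v\mapsto|v|$ are bounded on the relevant range $[\Gamma_\e-1,\Gamma_\e]$, so that only the Malliavin derivatives of $V^{\e,\zeta}_{T_k^{\e,\zeta}}$ enter the final estimate.
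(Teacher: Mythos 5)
Your proposal is correct and follows essentially the same approach as the paper. The paper simply packages your two Faa di Bruno expansions into a single appeal to \cite[Lemma 8]{bc}, giving $|G^{\e,\zeta}_t|_l\leq |G^{\e,\zeta}_t|+ C_l\bigl(\sup_{1\leq k\leq l}|\Psi^{(k)}(\Sigma^{\e,\zeta}_t)|\bigr)|\Sigma^{\e,\zeta}_t|_l^l$, and then derives both indicators at once from the support of $\Psi^{(k)}$ (namely $\Sigma^{\e,\zeta}_t\in[1/4,3/4]$ forces $\sup_{[0,t]}|V^{\e,\zeta}_s|\in[\Gamma_\e-1,\Gamma_\e]$), whereas you obtain the lower indicator from the support of $\Phi_\e'$; both routes are valid and lead to the same bound.
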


\begin{proof}
Using \cite[Lemma 8]{bc}, we have
$$
|G^{\e,\zeta}_t|_l\leq |G^{\e,\zeta}_t|+ 
C_l \left(\sup_{\{k=1,...,l\}}|\Psi^{(k)}(\Sigma^{\e,\zeta}_t)|\right) 
|\Sigma^{\e,\zeta}_t|_l^l.
$$
By definition of $\Psi$, we see that 
$\sup_{\{k=1,...,l\}}|\Psi^{(k)}(x)| \leq C_l \indiq_{\{1/4\leq x \leq 3/4\}}$.
Next we observe that by definition, $\Sigma^{\e,\zeta}_t\in [1/4,3/4]$
implies $\sup_{[0,t]}|V^{\e,\zeta}_s|\in[\Gamma_\e-1,\Gamma_\e]$.
Recalling (\ref{obsG}), we only have to prove that
$|\Sigma^{\e,\zeta}_t|_l \leq C_l (1+J_t) (\sup_{[0,t]}|V^{\e,\zeta}_s|_l^l )$.
But of course, 
$|\Sigma^{\e,\zeta}_t|_l\leq |\Phi_\e(|V_0|)|_l
+\sum_{1}^{J_t}|\Phi_\e(|V^{\e,\zeta}_{T_k}|)|_l\leq (1+J_t)\sup_{[0,t]} 
|\Phi_\e(|V^{\e,\zeta}_{s}|)|_l$. It only remains to check that for all
$s\in [0,T]$,
$|\Phi_\e(|V^{\e,\zeta}_{s}|)|_l \leq C_l |V^{\e,\zeta}_{s}|_l^l$.
But this is an immediate consequence of the chain rule
(see \cite[Lemma 8]{bc}) and the fact that $v \mapsto \Phi_\e(|v|)$
has bounded derivative of all orders, uniformly in $\e$.
\end{proof}

Finally, we have all the arms in hand to give the

\begin{preuve} {\it of Theorem \ref{ipp}.}
We apply (\ref{IP}) with
$$
F=V^{\e,\zeta}_t+ \sqrt{u_\zeta(t)} \begin{pmatrix} Z_{-1}\\ Z_0 
\end{pmatrix},
\quad 
G= G^{\e,\zeta}_t.
$$
We first notice that for $k\geq 1$, 
$D_k F=D_k V^{\e,\zeta}_t$, that $D_{-1}F= \sqrt{u_\zeta(t)}\begin{pmatrix}1 \\
0 \end{pmatrix}$ and $D_{0}F= \sqrt{u_\zeta(t)}\begin{pmatrix}0 \\
1 \end{pmatrix}$. 
We also have $LF=LV^{\e,\zeta}_t+ \sqrt{u_\zeta(t)}\begin{pmatrix} L Z_{-1} \\
LZ_0 \end{pmatrix}$. A simple 
computation shows that $LZ_0=Z_0$, so that $D_k (LZ_0)=\indiq_{k=0}$
and thus so that $D_lD_k (LZ_0)=0$. This yields $|L Z_0|_l=1+|Z_0|$.
By the same way, $|L Z_{-1}|_l=1+|Z_{-1}|$. 
Since $u_\zeta(t)\leq 1$,
$$
|F|_l \leq C_{l} (1+|V^{\e,\zeta}_t|_{l}),
\quad |LF|_l \leq 2+|Z_{-1}|+|Z_0|+|L V^{\e,\zeta}_t|_{l} \quad \hbox{and} \quad
\sigma(F)=u_\zeta(t)I+ \sigma(V^{\e,\zeta}_t).
$$
Using (\ref{IP}-\ref{estipoids}), 
we deduce that for $\beta$ a multi-index with length $q$,
$$
\left|\E\left[\partial_\beta^q \psi(F) G^{\e,\zeta}_t \right]\right|
\leq C_{q} \E[K_{\beta,q}] ||\psi||_\infty,
$$
where
\begin{align*}
K_{\beta,q}=&\frac{|G^{\e,\zeta}_t|_q(1+\sup_{[0,t]}|V^{\e,\zeta}_{t}|_{q+1})^{13q}}
{(\det(u_\zeta(t)I+\sigma(V^{\e,\zeta}_t)))^{3q-1}} 
\left[1+\sum_{j=1}^q\sum_{k_1+...+k_j\leq q-j}
\prod_{i=1}^j(2+|Z_{-1}|+|Z_0|+|LV^{\e,\zeta}_t|_{k_i}) \right]\\
\leq & C_q \indiq_{\{\sup_{[0,t]}|V^{\e,\zeta}_s|\leq \Gamma_\e \}}
\frac{(1+\sup_{[0,t]}|V^{\e,\zeta}_{t}|_{q+1})^{13q+q^2}}
{(\det(u_\zeta(t)I+\sigma(V^{\e,\zeta}_t)))^{3q-1}}
\left(1+J_t^q \indiq_{\{\sup_{[0,t]}|V^{\e,\zeta}_s|\geq \Gamma_\e-1 \}} \right)\\
&\hskip3cm \times \left[1+\sum_{j=1}^q\sum_{k_1+...+k_j\leq q-j}
\prod_{i=1}^j(2+|Z_{-1}|+|Z_0|+|LV^{\e,\zeta}_t|_{k_i}) \right]
\end{align*}
due to Lemma \ref{encoreun}. 
Using the Cauchy the Cauchy-Schwarz inequality,
we obtain 
$$
\E[K_{\beta,q}]\leq C_q I_1 I_2 I_3 I_4, 
$$
where
\begin{align*}
&I_1= \E\left[\indiq_{\{\sup_{[0,t]}|V^{\e,\zeta}_s|\leq \Gamma_\e \}} 
(1+\sup_{[0,t]}|V^{\e,\zeta}_{t}|_{q+1})^{4(13q+q^2)}\right]^{1/4},\\
&I_2= \E\left[(\det(u_\zeta(t)I+\sigma(V^{\e,\zeta}_t)))^{-4(3q-1)}\right]^{1/4},\\
&I_3= \E \left[1+ J_t^{4q} \indiq_{\{\sup_{[0,t]}|V^{\e,\zeta}_s|\geq \Gamma_\e-1 \}} 
\right]^{1/4},\\
&I_4= \E\left[1+ \sum_{j=1}^q \sum_{k_1+...+k_j\leq q-j} 
\prod_{i=1}^j(2+|Z_{-1}|+|Z_0|+|LV^{\e,\zeta}_t|_{k_i})^4
\indiq_{\{\sup_{[0,t]} |V^{\e,\zeta}_{s}| \leq \Gamma_\e\}} \right]^{1/4}.
\end{align*}
Making use of Lemmas \ref{minodet} and \ref{majoder}, 
we immediately get, for $0\leq t_0\leq t \leq  T$,
$$
I_1 \leq C_qe^{C_q \Gamma_\e^\gamma} \quad \hbox{and} \quad I_2 \leq 
C_{t_0,q}e^{C_q \Gamma_\e^\gamma}.
$$
Recall now that $J_t$ is a Poisson process with rate $4\Gamma_\e^\gamma 
(G(\zeta)+1)
\leq C \Gamma_\e^\gamma \zeta^{-\nu}$, so that 
$\E[J_t^p] \leq C_p \Gamma_\e^{\gamma p}\zeta^{-\nu p}$ for all $p\geq 1$.
Using Proposition \ref{aprate}-(iii) 
with some $1/\eta_0<\kappa<\delta$, 
and the Cauchy-Schwarz inequality, we obtain
\begin{align*}
I_3 \leq & C_q + C_q \E\left[ J_t^{8q} \right]^{1/8} \PR
\left[\sup_{[0,t]} |V^{\e,\zeta}_{s}| \geq \Gamma_\e-1 \right]^{1/8}\\
\leq & C_q+ C_{q}\Gamma_\e^{\gamma q}\zeta^{-\nu q}e^{-4(\Gamma_\e-1)^{\kappa}})
\E\left[\sup_{[0,t]}e^{32|V^{\e,\zeta}_{s}|^\kappa} \right]^{1/8}
\leq C_{q,\kappa}(1+\zeta^{-\nu q}e^{-2\Gamma_\e^\kappa}).
\end{align*}
Finally, using Lemma \ref{majoder}, we see that 
for $j=1,...,q$ and $k_1+...+k_j\leq q-j$,
\begin{align*}
&\E\left[\prod_{i=1}^j (2+|Z_{-1}|+|Z_0|+|LV^{\e,\zeta}_t|_{k_i})^{4}
\indiq_{\{\sup_{[0,t]} |V^{\e,\zeta}_{s}| \leq \Gamma_\e\}} \right]^{1/4}\\
\leq& \prod_{i=1}^j \E\left[(2+|Z_{-1}|+|Z_0|+|LV^{\e,\zeta}_t|_{k_i})^{4j}
\indiq_{\{\sup_{[0,t]} |V^{\e,\zeta}_{s}| \leq \Gamma_\e\}} \right]^{1/(4j)}\\
\leq& C_q \prod_{i=1}^j \E\left[1+ |LV^{\e,\zeta}_t|_{k_i}^{4j}
\indiq_{\{\sup_{[0,t]} |V^{\e,\zeta}_{s}| \leq \Gamma_\e\}} \right]^{1/(4j)}
\leq C_{q}e^{C_{q}\Gamma_\e^\gamma} \left[ \prod_{i=1}^j 
(1+\zeta^{-4j \nu} \e^{-4j(k_i+1)}) \right]^{1/(4j)} \\
\leq&   C_{q}e^{C_{q}\Gamma_\e^\gamma} \left[ \prod_{i=1}^j 
\zeta^{-4j \nu} \e^{-4j(k_i+1)} \right]^{1/(4j)}
\leq C_{q}e^{C_{q}\Gamma_\e^\gamma} \zeta^{-j \nu} \e^{-q}
\leq C_{q}e^{C_{q}\Gamma_\e^\gamma} \zeta^{-q \nu} \e^{-q},
\end{align*}
whence $I_4\leq C_{q}e^{C_{q}\Gamma_\e^\gamma} \zeta^{-q \nu} \e^{-q}$. 
All this yields 
$$
E[K_{\beta,q}] \leq C_{t_0,q,\kappa} e^{C_{q}\Gamma_\e^\gamma} \zeta^{-q \nu} \e^{-q}
(1+\zeta^{-\nu q}e^{-2\Gamma_\e^\kappa})
\leq C_{t_0,q,\kappa} e^{C_{q}\Gamma_\e^\gamma} \left(\zeta^{-q \nu} \e^{-q}+
\zeta^{-2 \nu q}e^{-\Gamma_\e^\kappa}\right).
$$
For the last inequality, we used that 
$\Gamma_\e= [\log(1/\e)]^{\eta_0}$ and that $\gamma\eta_0<1<\kappa \eta_0$.
Theorem \ref{ipp} is checked.
\end{preuve}

\section{Conclusion}\label{concl}
\setcounter{equation}{0}

We now wish to end the proof of our main result.

\begin{lem}\label{finalproofstep1}
Assume that for some $\alpha \in [0,2)$, some $K>0$, for all $\e\in(0,1)$,
$$
\sup_{[0,T]}\sup_{v_0\in\rr^2} f_s(Ball(v_0,\e)) \leq K \e^\alpha.
$$
Then for $\eta\in(0,1-\nu)$ and $p\geq 1$, for $0 < t_0 \leq t \leq T$,
for $\e\in (0,\e_0)$ and $\zeta\in (0,1)$,
for $q\geq 1$, for all $\xi\in\rr^2$ with $|\xi|\geq 1$,
$$
|\widehat{f_t}(\xi)|=
\left| \E\left[ e^{i\lc \xi,V_t\rc}\right]\right| \leq C_{q,t_0,\eta,p}
\left[|\xi|^{-q} (\e^{-q-\eta}\zeta^{-\nu q}+\e^{p}\zeta^{-2\nu q} ) 
+ |\xi|^{\nu+\eta} \e^{\nu+\gamma+\alpha}+|\xi|\e^{-\eta} \zeta^{1-\nu} 
\right].
$$
\end{lem}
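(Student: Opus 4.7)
The plan is to telescope via
$$|\widehat{f_t}(\xi)| = |\E[e^{i\lc\xi, V_t\rc}]| \leq |\E[e^{i\lc\xi, V_t\rc}-e^{i\lc\xi, V_t^\e\rc}]| + |\E[e^{i\lc\xi, V_t^\e\rc}-e^{i\lc\xi, V_t^{\e,\zeta}\rc}]| + |\E[e^{i\lc\xi, V_t^{\e,\zeta}\rc}]|,$$
using that $V_t\sim f_t$ by Proposition \ref{aprate}(ii). The first two differences are handled routinely via $|e^{ix}-e^{iy}|\leq 2^{1-\beta}|x-y|^\beta$ for $\beta\in(0,1]$, together with the key bound $e^{C\Gamma_\e^\gamma}\leq C_\eta\e^{-\eta}$ (valid since $\gamma\eta_0<1$). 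Proposition \ref{aprate}(v) with $\beta=\nu+\eta\in(\nu,1)$ produces the contribution $C|\xi|^{\nu+\eta}\e^{\nu+\gamma+\alpha}$, and Proposition \ref{aprate}(iv) with $\beta=1$ produces $C|\xi|\e^{-\eta}\zeta^{1-\nu}$.

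The main work is to estimate $|\E[e^{i\lc\xi, V_t^{\e,\zeta}\rc}]|$, where I would first introduce the localizer $G_t^{\e,\zeta}$ from (\ref{dfG}) and split this into $|\E[e^{i\lc\xi, V_t^{\e,\zeta}\rc}G_t^{\e,\zeta}]| + \E[1-G_t^{\e,\zeta}]$. The error $\E[1-G_t^{\e,\zeta}]\leq\PR(\sup_{[0,t]}|V_s^{\e,\zeta}|\geq\Gamma_\e-1)$ is controlled via (\ref{obsG}), Markov's inequality and Proposition \ref{aprate}(iii) by $C_\kappa e^{-\Gamma_\e^\kappa}$, which decays faster than any power of $\e$ since $\kappa\eta_0>1$ and can therefore be absorbed into the $|\xi|^{\nu+\eta}\e^{\nu+\gamma+\alpha}$ term for $p$ large enough. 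For the remaining piece I would replace $V_t^{\e,\zeta}$ by $F:=V_t^{\e,\zeta}+\sqrt{u_\zeta(t)}(Z_{-1},Z_0)^*$, paying the cost $|\xi|\sqrt{u_\zeta(t)}\E|(Z_{-1},Z_0)|\leq C|\xi|\zeta^{(4+\nu)/2}$, which is dominated by $|\xi|\zeta^{1-\nu}$ since $(4+\nu)/2>1-\nu$.

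At this point I would apply Theorem \ref{ipp} separately to the real and imaginary parts of $\psi(\cdot)=e^{i\lc\xi,\cdot\rc}$ (each of sup-norm $1$), using the constant multi-index $\beta=(j^*,\ldots,j^*)$ of length $q$, where $j^*\in\{1,2\}$ is chosen so that $|\xi_{j^*}|\geq|\xi|/\sqrt 2$. Since $\partial_\beta^q e^{i\lc\xi,\cdot\rc}=i^q\xi_{j^*}^q e^{i\lc\xi,\cdot\rc}$, rearranging yields
$$|\E[e^{i\lc\xi,F\rc}G_t^{\e,\zeta}]| \leq 2^{q/2}|\xi|^{-q}\cdot C_{q,t_0,\kappa}e^{C_{q,\kappa}\Gamma_\e^\gamma}\bigl(\e^{-q}\zeta^{-\nu q}+e^{-\Gamma_\e^\kappa}\zeta^{-2\nu q}\bigr),$$
and applying once more $e^{C\Gamma_\e^\gamma}\leq C_\eta\e^{-\eta}$ together with $e^{-\Gamma_\e^\kappa}\leq C_{p,\eta}\e^{p+\eta}$ (for any $p\geq 1$, using $\kappa\eta_0>1$) produces the target $|\xi|^{-q}(\e^{-q-\eta}\zeta^{-\nu q}+\e^p\zeta^{-2\nu q})$ contribution.

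The real difficulty here is not any individual estimate but the bookkeeping: each approximation step trades some factor of $\e$, $\zeta$, or $|\xi|$ against a different polynomial or exponential error, and the parameters $\beta$, $\kappa$, $\eta$, $p$ must be tuned so that everything lines up with the stated bound. The essential feature, built into Theorem \ref{ipp}, is that once one pays the Gaussian-perturbation price and localizes onto $\{|V^{\e,\zeta}|\leq\Gamma_\e\}$, the integration-by-parts formula delivers the $|\xi|^{-q}$ decay of the characteristic function with only a mild $e^{C\Gamma_\e^\gamma}$ blow-up in $\e$ and a polynomial blow-up $\zeta^{-\nu q}$ in $\zeta$.
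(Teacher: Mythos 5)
Your proposal is correct and follows essentially the same route as the paper's proof: the same telescoping $V_t\to V_t^\e\to V_t^{\e,\zeta}\to V_t^{\e,\zeta}+\sqrt{u_\zeta(t)}(Z_{-1},Z_0)$, the same localizer split via $G_t^{\e,\zeta}$, the same use of Proposition \ref{aprate}(iv),(v), and the same invocation of Theorem \ref{ipp} (the paper writes down both $\beta_1=(1,\ldots,1)$ and $\beta_2=(2,\ldots,2)$ where you pick $j^*$ with $|\xi_{j^*}|\geq|\xi|/\sqrt2$, which is the same thing). The only difference is a cosmetic reordering of when the Gaussian perturbation is introduced relative to the $G$-splitting, which does not affect any of the bounds.
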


\begin{proof}
We have $|\widehat{f_t}(\xi)|=\left| \E\left[ e^{i\lc \xi,V_t\rc}\right]\right|$
by Proposition (\ref{aprate})-(ii). We set 
$X_{t}^\zeta:=\sqrt{u_\zeta(t)}(Z_{-1},Z_0)$ for simplicity and write
\begin{align*}
|\widehat{f_t}(\xi)|
\leq & \left|\E\left[e^{i\lc \xi,V_t\rc}- e^{i\lc \xi,V_t^\e\rc}\right]\right|
+ \left|\E\left[e^{i\lc \xi,V^\e_t\rc}- e^{i\lc \xi,V_t^{\e,\zeta}\rc}\right]\right|
+\left|\E\left[e^{i\lc \xi,V^{\e,\zeta}_t\rc}
- e^{i\lc \xi,V_t^{\e,\zeta}+X_{t}^\zeta\rc}\right]\right| \\
&+  \left|\E\left[e^{i\lc \xi,V^{\e,\zeta}_t+X_{t}^\zeta\rc}
(1-G^{\e,\zeta}_t)\right]\right|
+ \left|\E\left[e^{i\lc \xi,V_t^{\e,\zeta}+X_{t}^\zeta\rc}G^{\e,\zeta}_t
\right]\right| \\
=:&A_1+...+A_5.
\end{align*}
First, we Theorem \ref{ipp} with $\psi(v)=e^{i\lc \xi,v\rc}$ and
the multi-indexes $\beta_1=(1,...,1)$ and $\beta_2=(2,...,2)$ with length $q$,
for which $\partial_{\beta_1}^q \psi(v)=(i\xi_1)^qe^{i\lc \xi,v\rc}$
and $\partial_{\beta_2}^q \psi(v)=(i\xi_2)^qe^{i\lc \xi,v\rc}$.
For any $\kappa\in (1/\eta_0,\delta)$,
$$
A_5 \leq C_{q,t_0,\kappa} |\xi|^{-q}e^{C_q\Gamma_\e^\gamma}
(\zeta^{- \nu q} \e^{-q} + \zeta^{-2 \nu q} e^{-\Gamma_\e^\kappa})
\leq C_{q,t_0,\eta,p} |\xi|^{-q} 
(\zeta^{- \nu q} \e^{-q-\eta} + \zeta^{-2 \nu q} \e^{p}),
$$
because $\Gamma_\e= \log(1/\e)^{\eta_0}$ and $\gamma\eta_0<1<\kappa\eta_0$.
Next, by (\ref{obsG}) and Proposition 
\ref{aprate}-(iii),
$$
A_4 \leq \PR \left[\sup_{[0,T]} |V^{\e,\zeta}_t|\geq \Gamma_\e-1 \right]
\leq C_{\kappa} e^{-(\Gamma_\e-1)^\kappa} \leq C \e^{\nu+\alpha+\gamma}.
$$
We could have chosen any other positive power of $\e$.
We also have, since $|e^{i\lc \xi, x\rc}- e^{i \lc \xi,y\rc}|\leq |\xi||x-y|$,
$$
A_3 \leq |\xi| \E\left[|X^{\zeta}_t|\right] \leq C |\xi|\sqrt{u_\zeta(t)}
\leq C |\xi|\zeta^{2+\nu/2}.
$$
Proposition \ref{aprate}-(iv) (with $\beta=1$) implies
$$
A_2 \leq |\xi| \E\left[|V^{\e,\zeta}_t -V^\e_t|\right] 
\leq C |\xi| e^{C \Gamma_\e^\gamma}\zeta^{1-\nu} 
\leq C_{\eta} |\xi|\e^{-\eta} \zeta^{1-\nu} .
$$
Finally, we notice that for $\beta\in (0,1]$,
$$
|e^{i\lc \xi, x\rc}- e^{i \lc \xi,y\rc}|\leq \min(|\xi||x-y|,2)\leq 
2^{1-\beta}|\xi|^\beta |x-y|^{\beta}.
$$
Hence using Proposition \ref{aprate}-(v) with $\beta=\nu+\eta$
(which is smaller than $1$),
$$
A_1 \leq 2^{1-\beta} 
\E\left[|\xi|^{\nu+\eta}|V^{\e}_t -V_t|^{\nu+\eta}\right]
\leq C_\eta |\xi|^{\nu+\eta} \e^{\nu+\eta+\gamma+\alpha}
e^{C_\eta \Gamma_\e^\gamma},
$$
which we can bound by $C_{\eta} |\xi|^{\nu+\eta}\e^{\nu+\gamma+\alpha}$ as usual.
To conclude the proof, it suffices to notice that we obviously have
$\e^{\nu+\alpha+\gamma}\leq |\xi|^{\nu+\eta} \e^{\nu+\alpha+\gamma}$ 
and $|\xi|\zeta^{2+\nu/2}\leq 
|\xi|\e^{-\eta} \zeta^{1-\nu}$.
\end{proof}

Next, we optimize the previous formula.

\begin{lem}\label{finalproofstep2}
Assume that for some $\alpha \in [0,2)$, some $K>0$, for all $\e\in(0,1)$,
$$
\sup_{[0,T]}\sup_{v_0\in\rr^2} f_s(Ball(v_0,\e)) \leq K \e^\alpha.
$$
Assume that $\nu\in (0,1/2)$ and that $\gamma>\nu^2/(1-2\nu)$.
Define 
$$
p(\alpha)=  \frac{(\alpha+\gamma)(1-2\nu)-\nu^2}{(\alpha+\gamma+\nu-1)\nu
+1}   >0. 
$$
Then for all $r\in (0,p(\alpha))$, all $0<t_0\leq t \leq T$
and all $\xi\in\rr^2$,
$$
| \widehat{f_t}(\xi)| \leq C_{r,t_0} |\xi|^{-r}.
$$
\end{lem}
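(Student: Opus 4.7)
\emph{Plan of proof.} The strategy is to optimize the estimate of Lemma \ref{finalproofstep1} by choosing $\e$ and $\zeta$ as suitable negative powers of $|\xi|$ and then picking $q,p$ large enough and $\eta$ small enough. Since the bound $|\widehat{f_t}(\xi)|\le 1$ makes the conclusion trivial for $|\xi|$ in a bounded set, I only need to treat $|\xi|$ large. Fix $r\in(0,p(\alpha))$, pick $r'\in(r,p(\alpha))$, and set
$$
a:=\frac{\nu+r'}{\nu+\gamma+\alpha},\qquad b:=\frac{1+r'}{1-\nu},\qquad \e=|\xi|^{-a},\qquad \zeta=|\xi|^{-b}.
$$
For $|\xi|$ sufficiently large, this places $\e\in(0,\e_0)$ and $\zeta\in(0,1)$, so Lemma \ref{finalproofstep1} applies.

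With these substitutions, the four summands in the bound of Lemma \ref{finalproofstep1} become pure powers of $|\xi|$:
term 3 is $|\xi|^{\nu+\eta-a(\nu+\gamma+\alpha)}=|\xi|^{\eta-r'}$, and term 4 is $|\xi|^{1+a\eta-b(1-\nu)}=|\xi|^{a\eta-r'}$. Choosing $\eta\in(0,1-\nu)$ with $(1+a)\eta\le r'-r$ makes both of these at most $|\xi|^{-r}$. Term 1 is $|\xi|^{-q(1-a-b\nu)+a\eta}$, so provided $1-a-b\nu>0$ we may take $q$ as large as we wish and drive this term below $|\xi|^{-r}$. Term 2 is $|\xi|^{-q(1-2\nu b)-ap}$; once $q$ is fixed, we can always pick $p$ large enough (even when $1-2\nu b\le 0$) to dominate, giving a contribution $\le |\xi|^{-r}$ as well.

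The only substantive point is therefore the algebraic verification that the crucial condition $a+b\nu<1$ is equivalent to $r'<p(\alpha)$. Direct computation gives
$$
\frac{\nu+r'}{\nu+\gamma+\alpha}+\frac{\nu(1+r')}{1-\nu}<1
\;\Longleftrightarrow\;
(1-2\nu-\nu r')(\nu+\gamma+\alpha)>(\nu+r')(1-\nu),
$$
and expanding both sides reduces this to
$$
(\gamma+\alpha)(1-2\nu)-\nu^2 \;>\; r'\bigl[\nu(\nu+\gamma+\alpha-1)+1\bigr].
$$
The bracketed denominator is positive (this uses $\nu<1/2$ and the hypothesis $\gamma>\nu^2/(1-2\nu)$, which also ensures the numerator $p(\alpha)\cdot[\ldots]$ is positive, so that $p(\alpha)>0$), so the condition is precisely $r'<p(\alpha)$. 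Since $r'\in(r,p(\alpha))$ was chosen satisfying this, we obtain $a+b\nu<1$ and the proof is complete.

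No genuine obstacle is expected: the result is a power-counting optimization of Lemma \ref{finalproofstep1}, and the value of $p(\alpha)$ is engineered to be exactly the threshold at which the saturation condition $a+b\nu<1$ between terms 3 and 4 breaks down. The bookkeeping in choosing $r'$, then $\eta$, then $q$, then $p$ in the correct order is the only thing requiring care.
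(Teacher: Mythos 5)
Your proof is correct and follows essentially the same route as the paper: both optimize Lemma \ref{finalproofstep1} by setting $\e=|\xi|^{-a}$, $\zeta=|\xi|^{-b}$, kill terms 3 and 4 by the choice of $a,b$, and then use the freedom to take $q$ and then $p$ large to kill terms 1 and 2. Your parameterization (choosing $a,b$ directly from $r'$ and reducing the whole matter to the single algebraic check $a+\nu b<1\iff r'<p(\alpha)$) is marginally more transparent than the paper's (which imposes $a+\nu b=1-\eta_1$ and picks $a$ to balance the residual two constraints, both equal to $p(\alpha)$), but the content and order of the choices ($r'$, then $\eta$, then $q$, then $p$) are the same.
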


\begin{proof}
We can assume that $|\xi|\geq 1$, because $f_t$ is a probability measure,
so that $||\widehat{f_t}||_\infty =1$. We use Lemma \ref{finalproofstep1}
with $\e=|\xi|^{-a}$ and $\zeta=|\xi|^{-b}$, for some $a>0$, $b>0$ such that
$a+\nu b=1-\eta_1$, for some small $\eta_1\in(0,1)$ to be chosen later.
We thus get, for some small $\eta \in (0,1-\nu)$
and some large $p\geq 1$, $q\geq 1$ to be chosen later,
for all $|\xi|\geq 1$,
\begin{align*}
|\widehat{f_t}(\xi)|
\leq& C_{q,t_0,\eta,p}
\left(|\xi|^{-q+a\eta+(a+\nu b)q} + |\xi|^{-q - a p +2\nu q b}
+ |\xi|^{\nu+\eta-a(\nu+\gamma+\alpha)}+|\xi|^{1+a\eta-b(1-\nu)}\right)\\
=& C_{q,t_0,\eta,p}
\left(|\xi|^{-\eta_1 q+a\eta} + |\xi|^{-q - a p +2 q (1-\eta_1-a)}+
|\xi|^{\nu+\eta-a(\nu+\gamma+\alpha)}
+|\xi|^{1+a\eta-(1-\eta_1-a)(1/\nu-1)}\right)\\
\leq & C_{q,t_0,\eta,p}
\left(|\xi|^{-\eta_1 q+1} + |\xi|^{q-a p} + |\xi|^{\nu+\eta-a(\nu+\gamma+\alpha)}
+|\xi|^{1+a(\eta+1/\nu-1)-(1-\eta_1)(1/\nu-1)} \right).
\end{align*}
We used here that $0<a\eta\leq 1$ and $1-\eta_1-a\leq 1$.
Let now $r\in (0,p(\alpha))$.
It remains to show that one may find 
$q\geq 1$, $p\geq 1$, $\eta_1 \in (0,1)$, $\eta \in (0,1-\nu)$
and $a\in (0,1-\eta_1)$ in such a way that
\begin{align}
&\eta_1q-1 \geq r, \label{az1} \\
&a p-q\geq r,\label{az2}  \\
&a(\nu+\gamma+\alpha) - \nu-\eta \geq r,\label{az3} \\
&(1-\eta_1)(1/\nu-1)-1 -a(\eta+1/\nu-1) \geq r.\label{az4} 
\end{align}
It suffices to show that (\ref{az3}) and (\ref{az4}) 
hold for some $\eta\in(0,1-\nu)$, some $\eta_1 \in (0,1)$
and some $a\in (0,1-\eta_1)$ small enough. Indeed, it will then suffice to
choose $q$ large enough to get (\ref{az1}) and then $p$ large enough
to obtain (\ref{az2}). Hence it suffices to check that there
is $a \in (0,1)$ such that 
$$
a(\nu+\gamma+\alpha) -\nu >r \quad \hbox{and} \quad
1/\nu-2 -a(1/\nu-1) > r.
$$
But setting $a=(1-2\nu+\nu^2)/[1+\nu(\nu+\gamma+\alpha-1)]$, we get
$$
a(\nu+\gamma+\alpha) -\nu =
1/\nu-2 -a(1/\nu-1) =p(\alpha)> r.
$$
To conclude the proof, it only remains to check that
$a\in (0,1)$. Clearly, $a>0$. To check that $a<1$, it suffices
to prove that $1-2\nu+\nu^2 < 1+\nu(\nu-1)$, which always holds for
$\nu>0$.
\end{proof}

The last preliminary consists of studying the function
$\alpha\mapsto p(\alpha)$.

\begin{lem}\label{finalproofstep3}
Assume that $\nu\in (0,1/2)$ and that $\gamma>\nu^2/(1-2\nu)$. 

(i) The map $\alpha \mapsto p(\alpha)$ is increasing on
$[0,\infty)$.
The function $\alpha \mapsto p(\alpha)/\alpha$ is decreasing on $(0,\infty)$
and $p(a_{\gamma,\nu})/a_{\gamma,\nu}=1$, where
$a_{\gamma,\nu}$ was defined by (\ref{agm}). 

(ii) Furthermore, we have, recalling (\ref{qgm})
\begin{align*}
&q_{\gamma,\nu}>1 \quad \Longleftrightarrow \quad
a_{\gamma,\nu}>1 \quad \Longleftrightarrow \quad
\nu<1/3\hbox{ and } \gamma>(2\nu+2\nu^2)/(1-3\nu),\\
&q_{\gamma,\nu}>2 \quad \Longleftrightarrow  \quad
a_{\gamma,\nu}>2 \quad \Longleftrightarrow \quad 
\nu<1/4\hbox{ and } \gamma>(6\nu+3\nu^2)/(1-4\nu).
\end{align*}
Observe that $q_{\gamma,\nu}=p(2\land a_{\gamma,\nu})$.

(iii) For $q\in(0,q_{\gamma,\nu})$, one may
find $n_0\geq 1$ and $0=\alpha_0<\alpha_1<...<\alpha_{n_0}$ such that
for all $k\in \{0,...,n_0-1\}$, $\alpha_k\in [0,2)$ and 
$\alpha_{k+1}<p(\alpha_k)$, with furthermore $\alpha_{n_0}\geq q$,
all these quantities depending only on $q,\gamma,\nu$.
\end{lem}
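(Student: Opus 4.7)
Write $p(\alpha)=(A+B\alpha)/(C+D\alpha)$ with
$A=\gamma(1-2\nu)-\nu^2,$ $B=1-2\nu,$ $C=(\gamma+\nu-1)\nu+1,$ $D=\nu.$
The hypothesis $\gamma>\nu^2/(1-2\nu)$ gives $A>0$, and $\nu\in(0,1/2)$ gives
$B,C,D>0$ (for $C$: $C\ge 1-\nu+\nu^2>1/2$). A direct expansion yields the
key identity $BC-AD=(1-\nu)^3>0$, so
$p'(\alpha)=(BC-AD)/(C+D\alpha)^2>0$ on $[0,\infty)$, proving that $p$ is
strictly increasing. Next, writing $p(\alpha)/\alpha=(A/\alpha+B)/(C+D\alpha)$,
one sees (using $A>0,D>0$) that the numerator is strictly decreasing and the
denominator strictly increasing in $\alpha>0$, so $\alpha\mapsto p(\alpha)/\alpha$
is strictly decreasing on $(0,\infty)$. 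The equation $p(\alpha)=\alpha$ reduces
to $D\alpha^2+(C-B)\alpha-A=0$; since $C-B=\nu(\gamma+\nu+1)$, dividing by
$\nu$ gives $\alpha^2+(\gamma+\nu+1)\alpha-[\gamma(1-2\nu)/\nu-\nu]=0$, whose
unique positive root is exactly $a_{\gamma,\nu}$ (formula \eqref{agm}). Hence
$p(a_{\gamma,\nu})/a_{\gamma,\nu}=1$, completing (i).

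For (ii), since $p(\alpha)/\alpha$ is strictly decreasing with unique root
$a_{\gamma,\nu}$, we have $a_{\gamma,\nu}>1\Leftrightarrow p(1)>1$ and
$a_{\gamma,\nu}>2\Leftrightarrow p(2)>2$. The inequality $p(1)>1$ is
equivalent to $A+B>C+D$, which simplifies to
$\gamma(1-3\nu)>2\nu(1+\nu)$, giving the stated condition
$\nu<1/3$ and $\gamma>(2\nu+2\nu^2)/(1-3\nu)$. Similarly $p(2)>2$ is
equivalent to $A+2B>2(C+2D)$, i.e.\ $\gamma(1-4\nu)>6\nu+3\nu^2$, giving
$\nu<1/4$ and $\gamma>(6\nu+3\nu^2)/(1-4\nu)$. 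The equivalences
$q_{\gamma,\nu}>1\Leftrightarrow a_{\gamma,\nu}>1$ and
$q_{\gamma,\nu}>2\Leftrightarrow a_{\gamma,\nu}>2$ follow by inspecting the
two cases in the definition \eqref{qgm}: in the case $a_{\gamma,\nu}\le 2$,
$q_{\gamma,\nu}=a_{\gamma,\nu}=p(a_{\gamma,\nu})=p(2\land a_{\gamma,\nu})$;
in the case $a_{\gamma,\nu}>2$, $q_{\gamma,\nu}=p(2)=p(2\land a_{\gamma,\nu})$,
and $a_{\gamma,\nu}>2$ forces $p(2)>2>1$.

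For (iii), fix $q\in(0,q_{\gamma,\nu})$. Since $q<q_{\gamma,\nu}=p(2\land a_{\gamma,\nu})$ and $p$ is continuous and
increasing, we may pick $\alpha^*\in(0,2\land a_{\gamma,\nu})$ with
$p(\alpha^*)>q$ (when $a_{\gamma,\nu}\le 2$ take $\alpha^*$ just below
$a_{\gamma,\nu}$; when $a_{\gamma,\nu}>2$ take $\alpha^*$ just below $2$).
On the compact $[0,\alpha^*]$, the continuous function $\alpha\mapsto p(\alpha)-\alpha$
is strictly positive (by (i), since $\alpha^*<a_{\gamma,\nu}$) hence bounded
below by some $\delta>0$. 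Starting from $\alpha_0=0$ and inductively setting
$\alpha_{k+1}=\min(\alpha_k+\delta/2,\alpha^*)$, we obtain an increasing sequence
in $[0,\alpha^*]$ with $\alpha_{k+1}<\alpha_k+\delta\le\alpha_k+(p(\alpha_k)-\alpha_k)=p(\alpha_k)$,
which reaches $\alpha^*$ in at most $\lceil 2\alpha^*/\delta\rceil$ steps; call
the first such index $n_0-1$. Finally set $\alpha_{n_0}:=q$, which satisfies
$q<p(\alpha^*)=p(\alpha_{n_0-1})$. All $\alpha_k$ for $k<n_0$ lie in
$[0,\alpha^*]\subset[0,2)$, as required, and all constants depend only on
$q,\gamma,\nu$.

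The only step that requires care is the identity $BC-AD=(1-\nu)^3$ and the
matching of the resulting quadratic with the formula for $a_{\gamma,\nu}$;
everything else is a routine monotonicity argument for a Möbius function
composed with a straightforward iteration.
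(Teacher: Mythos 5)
Your parts (i) and (ii) are correct and follow essentially the same computations as the paper (the identity $BC-AD=(1-\nu)^3$ is the paper's $1-3\nu+3\nu^2-\nu^3>0$, the quadratic $D\alpha^2+(C-B)\alpha-A=0$ matches the defining equation of $a_{\gamma,\nu}$, and the two threshold inequalities simplify to the same expressions). In part (iii) you replace the paper's geometric iteration $\alpha_{k+1}=(1-\eta)p(\alpha_k)$ (which converges up to a fixed point $q'\in(q,q_{\gamma,\nu})$) by an arithmetic iteration with a uniform step $\delta/2>0$ coming from the compactness of $[0,\alpha^*]$; this is a genuinely different, slightly more elementary construction, and it is valid in spirit.

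There is, however, a small gap in (iii): you append $\alpha_{n_0}:=q$ after reaching $\alpha_{n_0-1}=\alpha^*$, but you never check $\alpha^*<q$, which is required for the strict monotonicity $0=\alpha_0<\dots<\alpha_{n_0}$ stated in the lemma. With your choice ``$\alpha^*$ just below $a_{\gamma,\nu}$'' (when $a_{\gamma,\nu}\le 2$, so $q<a_{\gamma,\nu}$), or ``just below $2$'' (when $a_{\gamma,\nu}>2$ and $q\le 2$), one can perfectly well have $\alpha^*>q$, in which case the last step breaks monotonicity. The fix is easy: when $q<2\land a_{\gamma,\nu}$ simply take $\alpha^*=q$ and let the arithmetic iteration itself terminate at $q$ (the bound $\alpha_{n_0}=q\le\alpha_{n_0-1}+\delta/2<p(\alpha_{n_0-1})$ is automatic and no extra final element is needed); when $a_{\gamma,\nu}>2$ and $q\in[2,p(2))$, take $\alpha^*$ just below $2$ so that $\alpha^*<2\le q$ and $p(\alpha^*)>q$, and then your appended $\alpha_{n_0}=q$ is indeed larger than $\alpha_{n_0-1}=\alpha^*$. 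With this adjustment your argument is complete.
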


\begin{proof}
We start with point (i). To show that $p$ is increasing, it suffices to
note that its derivative is positive if and only if 
$(1-2\nu)[(\gamma+\nu-1)\nu+1]> \nu [\gamma(1-2\nu)-\nu^2]$, i.e.
$1-3\nu+3\nu^2-\nu^3>0$, which always holds for $\nu\in(0,1)$.
We also have 
$$
\frac{p(\alpha)}{\alpha}=
\frac{1-2\nu}{\alpha\nu + [(\gamma+\nu-1)\nu+1]} 
+ \frac{\gamma(1-2\nu)-\nu^2}{\alpha^2\nu+ \alpha[(\gamma+\nu-1)\nu+1]},
$$
which is obviously decreasing, because under our assumptions,
$1-2\nu>0$, $\gamma(1-2\nu)-\nu^2>0$ and $(\gamma+\nu-1)\nu+1 >0$.
Next, $a_{\gamma,\nu}>0$ is designed to solve $\nu a_{\gamma,\nu}^2
+\nu(\gamma+\nu+1)a_{\gamma,\nu}=\gamma(1-2\nu)-\nu^2$, whence
\begin{align*}
\frac{p(a_{\gamma,\nu})}{a_{\gamma,\nu}}
=\frac{a_{\gamma,\nu}(1-2\nu) + \gamma(1-2\nu)-\nu^2}
{\nu a_{\gamma,\nu}^2 +\nu(\gamma+\nu+1)a_{\gamma,\nu} +(1-2\nu)a_{\gamma,\nu}}
=1.
\end{align*}

We now prove (ii). Due to (i), we clearly have
$a_{\gamma,\nu}>1$ if and only if $p(1)/1>1$, i.e.
$[(1+\gamma)(1-2\nu)-\nu^2]/[(\gamma+\nu)\nu+1] >1$, which is equivalent
to $\nu>1/3$ and $\gamma>(2\nu+2\nu^2)/(1-3\nu) $. By the same
way, $a_{\gamma,\nu}>2$ if and only if $p(2)/2>1$, i.e.
$[(2+\gamma)(1-2\nu)-\nu^2]/[(1+\gamma+\nu)\nu+1] >2$, which is equivalent
to $\nu>1/4$ and $\gamma>(6\nu+3\nu^2)/(1-4\nu)$.
Next we note that we always have $q_{\gamma,\nu}=p(a_{\gamma,\nu}\land 2)$.
Thus we have $a_{\gamma,\nu}>2$ if and only if $p(2)/2>1$ if and only if
$q_{\gamma,\nu}>2$. Similarly, $a_{\gamma,\nu}>1$ if and only if $p(1)/1>1$ 
if and only if $q_{\gamma,\nu}>1$.

\vip

Let us now check point (iii). We fix $q\in (0,q_{\gamma,\nu})$.

We first assume that $a_{\gamma,\nu}\leq 2$,
whence $q_{\gamma,\nu}=a_{\gamma,\nu}$. We fix $q'\in (q,q_{\gamma,\nu})$,
we observe that due to (i), $p(q')/q'>1$ and 
we consider $\eta>0$ such that $(1-\eta)p(q')/q'=1$. Then by (i),
we deduce that the sequence $\alpha_0=0$, $\alpha_{k+1}=(1-\eta)p(\alpha_k)$
takes its values in $[0,q']\subset [0,2)$ and 
increases to $q'$. Thus for some $n_0$, $\alpha_{n_0}\geq q$. Of course, we have
$\alpha_{k+1}<p(\alpha_k)$ for all $k\in \{0,...,n_0-1\}$, so that
$(\alpha_0,...,\alpha_{n_0})$ solves our problem.

Next we assume that $a_{\gamma,\nu}> 2$, whence
$q_{\gamma,\nu}=p(2)>2$. We may assume that $q\in (2,p(2))$.
We consider $\eta>0$ such that $(1-\eta)p(2)/2=1$,
whence $(1-\eta)p(\alpha)/\alpha >1$ for all $\alpha\in [0,2)$.
Then by (i), the sequence $\alpha_0=0$, $\alpha_{k+1}=(1-\eta)p(\alpha_k)$
takes its values in $[0,2)$ and 
increases to $2$. Consider now $x\in(0,2)$ such that $p(x)=q$ (recall
that $q\in (2,p(2))$ is fixed). Then for $n_0$ sufficiently large,
we have $\alpha_{n_0-1}>x$ and thus 
$\alpha_{n_0-1}<q<p(\alpha_{n_0-1})$. 
Hence $(\alpha_0,...,\alpha_{n_0-1},q)$
solves our problem.
\end{proof}

Finally, we can give the

\begin{preuve} {\it of Theorem \ref{main}.} Points (ii) and (iii)
follow from (i) and Lemma \ref{finalproofstep3}.
We fix $0<t_0<T$ and $q\in (0,q_{\gamma,\nu})$. The only thing
we have to check is that for all $\xi\in\rr^2$, all
$t\in [t_0,T]$, $|\widehat{f_t} (\xi)|\leq C_{t_0,q}(1+|\xi|)^{-q}$.
Then the Sobolev and the ball estimate will follow (see Lemma \ref{foufou}).
By Lemma \ref{finalproofstep3}, we may consider 
$n_0\geq 1$ and $0=\alpha_0<\alpha_1<...<\alpha_{n_0}$ such that
for all $k\in \{0,...,n_0-1\}$, $\alpha_k\in [0,2)$ and 
$\alpha_{k+1}<p(\alpha_k)$, with $\alpha_{n_0}\geq q$.

\vip

{\it Step 1.} First, we apply Lemma \ref{finalproofstep2} with
$\alpha=\alpha_0=0$. Since $\alpha_1<p(\alpha_0)$, we deduce that
$$
\sup_{t\in [t_0/{n_0},T]} |\widehat{f_t}(\xi)| \leq C |\xi|^{-\alpha_1}.
$$
By Lemma \ref{foufou}, we deduce that $\sup_{[t_0/n_0,T]} \sup_{v_0\in \rr^2}
f_t(Ball(v_0,\e)) \leq C_{t_0,q} \e^{\alpha_1}$.

\vip

{\it Step 2.} Define now $(f^1_t)_{t\in [0,T-t_0/n_0]}$ by 
$f_t^1=f(t+t_0/n_0)$. This is also a weak solution
of (\ref{be}), which satisfies the same properties as $(f_t)_{t\in[0,T]}$,
and the additionnal property that $\sup_{[0,T-t_0/n_0]} \sup_{v_0\in \rr^2}
f_t^1(Ball(v_0,\e)) \leq C_{t_0,q} \e^{\alpha_1}$.
We thus can apply Lemma \ref{finalproofstep2} with $\alpha=\alpha_1$
and $r=\alpha_2<p(\alpha_1)$, to get 
$$
\sup_{t\in [2t_0/n_0,T]} |\widehat{f_t} (\xi)|=
\sup_{t\in [t_0/n_0,T-t_0/n_0]} |\widehat{f_t^1} (\xi)| \leq C |\xi|^{-\alpha_2},
$$
whence $\sup_{[2t_0/n_0,T]} \sup_{v_0\in \rr^2}
f_t(Ball(v_0,\e)) \leq C_{t_0,q} \e^{\alpha_2}$ by Lemma \ref{foufou}.

\vip

{\it Step 3.} Iterating this procedure ($n_0$ times), we deduce that 
$$
\sup_{t\in [t_0,T]} |\widehat{f_t} (\xi)| \leq C_{t_0,r} |\xi|^{-\alpha_{n_0}}.
$$
But $f_t$ is a probability measure, so that $|\widehat{f_t} (\xi)|\leq 1$.
Thus
$$
\sup_{t\in [t_0,T]} |\widehat{f_t} (\xi)| \leq C_{t_0,r} (1+|\xi|)^{-\alpha_{n_0}},
$$
which ends the proof since $\alpha_{n_0}\geq q$.
\end{preuve}

\section{Appendix}\label{ap}
\setcounter{equation}{0}

\subsection*{Fourier transforms}

We first prove an easy result on Fourier transforms.
Recall that for $f$ a probability measure on $\rr^2$ and $\xi\in\rr^2$, 
we denote by
$\widehat{f}(\xi)=\cF f(\xi)=\intrd e^{i\lc\xi,v\rc}f(dv)$.

\begin{lem}\label{foufou}
Let $f$ be a probability measure on $\rr^2$
such that $|\widehat f(\xi)|\leq K|\xi|^{-\alpha}$, for some
$\alpha\in (0,2)$. Then for all $v_0\in\rr^2$, all $\e\in (0,1)$,
one has
$f(Ball(v_0,\e))\leq C_{K,\alpha} \e^\alpha$.
\end{lem}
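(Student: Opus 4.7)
The plan is to dominate $\indiq_{Ball(v_0,\e)}$ by a smooth bump and estimate the resulting integral via Fourier inversion. Fix once and for all a Schwartz function $\varphi:\rr^2\to\rr_+$ with $\varphi(v)\geq 1$ for $|v|\leq 1$ (for instance a suitable rescaled Gaussian), and set $\varphi_\e(v)=\varphi((v-v_0)/\e)$. Since $\indiq_{Ball(v_0,\e)}\leq \varphi_\e$, we get
$$f(Ball(v_0,\e))\leq \intrd \varphi_\e(v)\,f(dv).$$
Using the Fourier inversion formula $\varphi_\e(v)=(2\pi)^{-2}\intrd e^{-i\lc\xi,v\rc}\widehat{\varphi_\e}(\xi)\,d\xi$, a Fubini argument rewrites this as
$$\intrd \varphi_\e(v)\,f(dv)=\frac{1}{(2\pi)^2}\intrd \widehat{\varphi_\e}(\xi)\,\widehat f(-\xi)\,d\xi,$$
and the scaling identity $\widehat{\varphi_\e}(\xi)=\e^2 e^{i\lc\xi,v_0\rc}\widehat\varphi(\e\xi)$, followed by the change of variable $\eta=\e\xi$, produces
$$f(Ball(v_0,\e))\leq \frac{1}{(2\pi)^2}\intrd |\widehat\varphi(\eta)|\,|\widehat f(-\eta/\e)|\,d\eta.$$

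To bound this integral I would combine the two available estimates on $\widehat f$: the trivial $|\widehat f|\leq 1$ (valid because $f$ is a probability measure) and the hypothesis $|\widehat f(\xi)|\leq K|\xi|^{-\alpha}$. Splitting the $\eta$-integral at $|\eta|=K^{1/\alpha}\e$, the inner region contributes at most $\|\widehat\varphi\|_\infty\cdot \pi K^{2/\alpha}\e^2$, while the outer region is controlled by
$$K\e^\alpha \intrd |\widehat\varphi(\eta)|\,|\eta|^{-\alpha}\,d\eta.$$
The crucial observation --- and the only place the constraint $\alpha<2$ is used --- is that this last integral converges: near the origin, $|\eta|^{-\alpha}$ is locally integrable on $\rr^2$ precisely when $\alpha<2$, while at infinity the Schwartz decay of $\widehat\varphi$ ensures convergence. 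Since $\e\in(0,1)$ and $\alpha<2$, the $\e^2$ contribution is absorbed into the $\e^\alpha$ one, yielding $f(Ball(v_0,\e))\leq C_{K,\alpha}\e^\alpha$.

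The only point requiring a hint of care is the Parseval-type identity for a probability measure paired with a Schwartz function, but this is immediate from Fubini applied to the Fourier inversion of $\varphi_\e$, so there is no real obstacle. The proof is essentially a one-line Fourier-analytic computation; the two-dimensional setting enters only through the condition $\alpha<2$ governing the local integrability of $|\eta|^{-\alpha}$ at the origin.
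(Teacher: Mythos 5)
Your proof is correct but takes a genuinely different route from the paper's. The paper tests $f$ against the indicator of a box $[x_0-\e,x_0+\e]\times[y_0-\e,y_0+\e]$, invokes Plancherel, and uses the explicit Fourier transform of that indicator (the product-of-sines kernel) together with the elementary bound $|\xi|\geq\sqrt{2|\xi_1\xi_2|}$; the convergence of $\int|\sin x|/|x|^{1+\alpha/2}\,dx$ is where $\alpha<2$ enters. You instead majorize $\indiq_{Ball(v_0,\e)}$ by a Schwartz bump $\varphi_\e$, pair $f$ with $\varphi_\e$ via Fourier inversion and Fubini, rescale, and land on $\int|\widehat\varphi(\eta)||\eta|^{-\alpha}d\eta$, whose convergence again encodes $\alpha<2$ through local integrability of $|\eta|^{-\alpha}$ in dimension $2$. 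Your version buys a genuinely cleaner justification: pairing a measure with a Schwartz function is Fubini with no regularity caveats, whereas the paper's Parseval step involves a discontinuous (non-Schwartz) test function and implicitly requires an approximation argument that the paper omits. The paper's version buys an entirely explicit kernel at the cost of that small gap. One cosmetic remark: your split at $|\eta|=K^{1/\alpha}\e$ is not actually needed, since the hypothesis $|\widehat f(\xi)|\leq K|\xi|^{-\alpha}$ already holds for all $\xi$ and the resulting integral $K\e^\alpha\int|\widehat\varphi(\eta)||\eta|^{-\alpha}\,d\eta$ is finite on its own; the split is harmless but makes the argument look two-case when it is one-line.
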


\begin{proof}
We use the Plancherel identity. Recall that
$$
\cF (\indiq_{[x_0-\e,x_0+\e]\times[y_0-\e,y_0+\e]}) (\xi_1,\xi_2)
=4 e^{i\xi_1x_0+i\xi_2y_0}\sin (\xi_1\e) \sin (\xi_2\e)
/(\xi_1\xi_2).
$$
Setting $v_0=(x_0,y_0)$,
\begin{align*}
f(Ball(v_0,\e))\leq& 
\intrd f(dv) \indiq_{[x_0-\e,x_0+\e]\times[y_0-\e,y_0+\e]}(v) 
\leq C
\intrd \left|\widehat f (\xi) \frac{\sin (\xi_1\e) \sin (\xi_2\e)}
{\xi_1\xi_2}\right|d\xi\\
\leq & C_K \intrd |\xi|^{-\alpha} \frac{|\sin (\xi_1\e) \sin (\xi_2\e)|}
{|\xi_1\xi_2|}d\xi
\leq C_K\intrd \frac{|\sin (\xi_1\e) \sin (\xi_2\e)|}
{|\xi_1\xi_2|^{1+\alpha/2}}d\xi,
\end{align*}
because $|\xi|\geq \sqrt{2|\xi_1\xi_2|}$.
We handle the substitution $\xi=x/\e$ and get
\begin{align*}
f(Ball(v_0,\e))\leq&
C_K \e^\alpha \intrd \frac{|\sin (x_1)|}{|x_1|^{1+\alpha/2}}
\frac{|\sin (x_2)|}{|x_2|^{1+\alpha/2}}dx
\leq C_K \e^\alpha 
\left( \int_\rr \frac{|\sin (x_1)|}{|x_1|^{1+\alpha/2}}dx_1\right)^2.
\end{align*}
We easily conclude, since $\alpha \in (0,2)$.
\end{proof}

\subsection*{Derivatives}
We recall here the Faa di Bruno 
formula. Let $l\geq 1$ be fixed.
The exist some coefficients $a^{l,r}_{i_1,...,i_r}>0$  such that 
for $\phi:\rr\mapsto\rr$ and $\tau:\rr\mapsto\rr$ of class $C^l(\rr)$,
\begin{eqnarray}\label{fdb}
[\phi(\tau)]^{(l)}= [\tau']^l \phi^{(l)}(\tau)
+ \sum_{r=1}^{l-1} \left(\sum_{i_1+...+i_r=l}
a^{l,r}_{i_1,...,i_r}  \prod_{j=1}^r \tau^{(i_j)} \right) 
\phi^{(r)}(\tau),
\end{eqnarray}
where the sum is taken over $i_1\geq 1$, ..., $i_r\geq 1$
with $i_1+...+i_r=l$.

\vip

We carry on with another formula.
For $l\geq 2$ fixed, there 
exist some coefficients $c^{l,r}_{i_1,..,i_q}\in\rr$ such that
for $\phi:\rr\mapsto\rr$ a $C^l$-diffeomorphism
and for $\tau$ its inverse function,
\begin{equation}\label{dninv}
\tau^{(l)}=\sum_{r=l+1}^{2l-1} \frac{1}{(\phi'(\tau))^r}
\sum_{i_1+...+i_q=r-1} c^{l,r}_{i_1,..,i_q}\prod_{j=1}^q \phi^{(i_j)}(\tau),
\end{equation}
where the sum is taken over $q\in\nn$, over $i_1,...,i_q\in \{2,...,l\}$
with $i_1+...+i_q=r-1$. This formula can be checked 
by induction on $l\geq 2$.

\subsection*{Regularity of the modified cross section}

Recall that $\vartheta:[0,\infty)\mapsto (0,\pi/2]$ 
was defined in Section \ref{cdv} as the inverse of $G:(0,\pi/2]\mapsto
[0,\infty)$ given by $G(x)=\int_x^{\pi/2}b(\theta)d\theta$.

\begin{lem}\label{dervartheta}
The function $\vartheta$ is $C^\infty$ on $(0,\infty)$. 
For all $z>0$,
\begin{eqnarray*}
\hbox{\rm(i)}&& c(1+z)^{-1/\nu} \leq \vartheta(z) \leq C(1+z)^{-1/\nu},\\
\hbox{\rm(ii)}&& c(1+z)^{-1/\nu-1} \leq |\vartheta'(z)| \leq C(1+z)^{-1/\nu-1},\\
\hbox{\rm(iii)}&& |\vartheta^{(k)}(z)|\leq C_k(1+z)^{-1/\nu-1},
\quad k\geq 1,\\
\hbox{\rm(iv)}&& |(A(\vartheta(z)))^{(k)}|\leq C_k (1+z)^{-1/\nu-1}, \quad
k\geq 1.
\end{eqnarray*}
\end{lem}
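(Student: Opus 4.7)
The overall plan is to work directly from the definition $G(\vartheta(z))=z$, exploiting the sharp two-sided estimates on $b$ (and its derivatives) provided by $({\bf A}(\gamma,\nu))$.

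For (i), I integrate $b(\theta)\simeq \theta^{-1-\nu}$ between $x$ and $\pi/2$ to get $c(x^{-\nu}-(\pi/2)^{-\nu})\leq G(x)\leq C(x^{-\nu}-(\pi/2)^{-\nu})$, hence $G(x)+(\pi/2)^{-\nu}\simeq x^{-\nu}$. Substituting $x=\vartheta(z)$ and inverting gives $\vartheta(z)\simeq (1+z)^{-1/\nu}$. The smoothness of $\vartheta$ on $(0,\infty)$ follows from the inverse function theorem, since $G$ is $C^\infty$ with $G'(\theta)=-b(\theta)<0$ on $(0,\pi/2]$.

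For (ii), differentiating $G(\vartheta(z))=z$ yields $\vartheta'(z)=-1/b(\vartheta(z))$, so the assumption $b(\theta)\simeq \theta^{-1-\nu}$ combined with (i) gives $|\vartheta'(z)|\simeq \vartheta(z)^{1+\nu}\simeq (1+z)^{-1/\nu-1}$.

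For (iii), I apply formula (\ref{dninv}) with $\phi=G$ and $\tau=\vartheta$: for $l\geq 2$,
\begin{equation*}
\vartheta^{(l)}(z)=\sum_{r=l+1}^{2l-1}\frac{1}{(G'(\vartheta(z)))^r}
\sum_{\substack{q\geq 1,\ i_1,\dots,i_q\in\{2,\dots,l\}\\ i_1+\cdots+i_q=r-1}}
c^{l,r}_{i_1,\dots,i_q}\prod_{j=1}^q G^{(i_j)}(\vartheta(z)).
\end{equation*}
From $({\bf A}(\gamma,\nu))$, $|G^{(k)}(\theta)|=|b^{(k-1)}(\theta)|\leq C_k \theta^{-\nu-k}$ for $k\geq 2$, and $|G'(\vartheta(z))|=b(\vartheta(z))\simeq \vartheta(z)^{-1-\nu}$. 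Plugging in and using $\sum i_j=r-1$ gives each term bounded by a constant times $\vartheta(z)^{r(1+\nu)-q\nu-(r-1)}=\vartheta(z)^{(r-q)\nu+1}$. The constraints $i_j\geq 2$ and $\sum i_j=r-1$ force $q\leq (r-1)/2$, so $r-q\geq (r+1)/2\geq 1$. Therefore $(r-q)\nu+1\geq 1+\nu$ and (since $\vartheta(z)\leq \pi/2$) each term is $\leq C\vartheta(z)^{1+\nu}\leq C(1+z)^{-1/\nu-1}$ by (i).

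For (iv), I apply the Faa di Bruno formula (\ref{fdb}) to $\phi=A$, $\tau=\vartheta$. Since $A(\theta)$ is $C^\infty$ on $[-\pi/2,\pi/2]$, all derivatives $A^{(r)}$ are uniformly bounded. Each resulting term is a bounded matrix times a product $\prod_{j=1}^r \vartheta^{(i_j)}(z)$ with $r\geq 1$ and $i_j\geq 1$, so by (ii)–(iii) the product is bounded by $C(1+z)^{-r(1/\nu+1)}\leq C(1+z)^{-1/\nu-1}$. The main (only) subtle point is verifying $r-q\geq 1$ in step (iii), which is why the bound $(1+z)^{-1/\nu-1}$ (and not something larger) is achievable uniformly in $l$; everything else reduces to bookkeeping with the Faa di Bruno / inverse-function formulas already recalled in the appendix.
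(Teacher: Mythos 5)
Your proposal is correct and follows essentially the same route as the paper: two-sided bounds on $G$ for (i), $\vartheta' = -1/b(\vartheta)$ for (ii), the inverse-function derivative formula (\ref{dninv}) with the combinatorial constraint $q\leq(r-1)/2$ for (iii), and Faa di Bruno (\ref{fdb}) for (iv). The only (cosmetic) difference is the bookkeeping in (iii): you keep the exponent $(r-q)\nu+1$ and use $r-q\geq 1$ directly, while the paper first replaces $q$ by its maximum $(r-1)/2$ and simplifies to $(r+1)\nu/2+1$; both reduce to the same bound once one uses that $\vartheta$ is bounded above.
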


\begin{proof}
Due to ({\bf A}$(\gamma,\nu)$), we have $c (x^{-\nu}-(\pi/2)^{-\nu}) 
\leq G(x)\leq C (x^{-\nu}- (\pi/2)^{-\nu})$, for all $x\in (0,\pi/2]$.
Since $\vartheta$ is nonincreasing, we easily deduce that for all
$z\in [0,\infty)$, $(z/c+(\pi/2)^{-\nu})^{-1/\nu}\leq \vartheta(z)
\leq (z/C+(\pi/2)^{-\nu})^{-1/\nu}$ and (i) follows.
Next, we have $|\vartheta'(z)|=1/|b(\vartheta(z))|$.
But $b(x)\in [c x^{-1-\nu},Cx^{-1-\nu}]$, so that
$|\vartheta'(z)|\in [\vartheta^{1+\nu}(z)/C,\vartheta^{1+\nu}(z)/c]$.
Using (i), we deduce (ii).
Next, (iii) is obtained from (\ref{dninv}):
using that for any $k\geq 2$, $|G^{(k)}(x)|=|b^{(k-1)}(x)|\leq
C_k |x|^{-\nu-k}$, we get
$$
|\vartheta^{(k)}(z)|\leq C_k \sum_{r=k+1}^{2k-1} |\vartheta(z)|^{r(\nu+1)}
\sum_{i_1+...+i_q=r-1} |\vartheta(z)|^{-\nu q-r+1}.
$$
Since we have $i_1,...,i_q \in \{2,...,k\}$ such that $i_1+...+i_q=r-1$,
we see that $q\leq (r-1)/2$. Consequently, for $k\geq 2$,
\begin{align*}
|\vartheta^{(k)}(z)|\leq& C_k \sum_{r=k+1}^{2k-1} 
|\vartheta(z)|^{r(\nu+1)} |\vartheta(z)|^{-\nu (r-1)/2-r+1} \\
=& C_k  \sum_{r=k+1}^{2k-1} 
|\vartheta(z)|^{(r+1)\nu/2+1} \leq C_k |\vartheta(z)|^{(k+2)\nu/2+1}
\leq C_k(1+|z|)^{-1/\nu-1},
\end{align*}
where we finally used (i). Since  $|A^{(l)}(\theta)|
\leq C_l$ for all $l\geq 1$, (iv) follows from 
(\ref{fdb}) and (iii).
\end{proof}

\subsection*{Regularity of the cutoff function}

We now prove some regularity properties of our cutoff function
$\phi_\e$.

\begin{lem}\label{rphie}
Consider the function $\phi_\e$ introduced in (\ref{dfphie}).

(i) For $\beta \in (0,1]$, for all $x,y \geq 0$, all $\e\in (0,\e_0)$,
\begin{equation*}
x^\beta |\phi_\e^\gamma(x) -\phi_\e^\gamma(y)| \leq 
C_\beta \Gamma_\e^\gamma |x-y|^\beta.
\end{equation*}

(ii) For every $l \geq 1$, for 
every multi-index $q=(q_1,...,q_l) \in \{1,2\}^l$,
\begin{align*}
&\big|\partial_{v_{q_l}}...\partial_{v_{q_1}} [\log \phi_\e(|v|)] \big| \leq 
C_l \left(\indiq_{\{|v|\in(\e,\Gamma_\e-1]\}}|v|^{-l} + 
\indiq_{|v|\in (\Gamma_\e-1,\Gamma_\e+1)}\Gamma_\e^{-1} \right), \\
&\big|\partial_{v_{q_l}}...\partial_{v_{q_1}} [\phi_\e^\gamma(|v|)] \big| \leq 
C_l \left(\indiq_{\{|v|\in(\e,\Gamma_\e-1]\}}|v|^{\gamma-l} + 
\indiq_{|v|\in (\Gamma_\e-1,\Gamma_\e+1)}\Gamma_\e^{\gamma-1} \right) .
\end{align*}
\end{lem}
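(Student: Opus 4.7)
Two elementary properties of $\phi_\e$, inherited from its definition as a mollification at scale $\e$ of the $1$-Lipschitz function $\psi(y) = (y \lor 2\e) \land \Gamma_\e$, drive everything. First, $\phi_\e$ itself is $1$-Lipschitz, so $|\phi_\e(x) - \phi_\e(y)| \leq |x-y|$. Second, $2\e \leq \phi_\e \leq \Gamma_\e$ and $\phi_\e(x) \geq c \min(x,\Gamma_\e)$ for some absolute $c>0$ when $x\geq 2\e$. Combined with the concavity estimate $|a^\gamma - b^\gamma| \leq |a-b|^\gamma$ for $\gamma\in(0,1)$, the Lipschitz property yields the two basic bounds
\[
|\phi_\e^\gamma(x) - \phi_\e^\gamma(y)| \leq |x-y|^\gamma \quad\hbox{and}\quad |\phi_\e^\gamma(x) - \phi_\e^\gamma(y)| \leq 2\Gamma_\e^\gamma,
\]
while the mean value theorem applied to $\phi_\e^\gamma$ gives $|\phi_\e^\gamma(x) - \phi_\e^\gamma(y)| \leq \gamma \phi_\e(\xi)^{\gamma-1}|x-y|$ for some $\xi$ between $x$ and $y$.

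To prove (i), I would split on the ratio $|x-y|/x$. When $|x-y|\geq x/2$, the trivial inequality $x^\beta\leq 2^\beta |x-y|^\beta$ combined with the $\Gamma_\e^\gamma$ bound on the difference closes the case. Otherwise $y\in[x/2,2x]$, and I would use the mean-value bound with $\phi_\e(\xi)\geq c\min(x,\Gamma_\e)$, reducing the claim to the inequality $x^\beta (x\land \Gamma_\e)^{\gamma-1}|x-y|^{1-\beta}\leq C\Gamma_\e^\gamma$. This is verified by treating two sub-regimes: if $x\leq \Gamma_\e$, then $|x-y|^{1-\beta}\leq C x^{1-\beta}$ and everything telescopes to $x^\gamma\leq \Gamma_\e^\gamma$; if $x>\Gamma_\e$, the constraint $y\geq x/2$ either forces $x\leq 2\Gamma_\e+2$ (so $x^\beta\leq C\Gamma_\e^\beta$ suffices with $\phi_\e(\xi)\asymp\Gamma_\e$), or else puts both $x$ and $y$ in the flat region $\{\phi_\e=\Gamma_\e\}$ making the difference vanish.

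For (ii), the starting point is the derivative structure of $\phi_\e$ itself. Writing $\phi_\e=\psi*\chi_\e$ with $\chi_\e(x)=\chi(x/\e)/\e$ and noting that the distributional derivative $\psi''$ equals $\delta_{2\e}-\delta_{\Gamma_\e}$, one obtains $|\phi_\e'|\leq 1$ and, for $k\geq 2$, $\phi_\e^{(k)}(x) = \chi_\e^{(k-2)}(x-2\e) - \chi_\e^{(k-2)}(x-\Gamma_\e)$. Hence $\phi_\e^{(k)}$ is supported in $(\e,3\e)\cup(\Gamma_\e-\e,\Gamma_\e+\e)$ with $|\phi_\e^{(k)}|\leq C_k\e^{1-k}$, while $\phi_\e(x)=x$ identically on $[3\e,\Gamma_\e-1]$ and $\phi_\e$ is constant outside. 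The Faa di Bruno formula (\ref{fdb}) expresses $[\log\phi_\e]^{(l)}$ and $[\phi_\e^\gamma]^{(l)}$ as finite sums of products $\prod_j \phi_\e^{(i_j)}$ times $\phi_\e^{-r}$ (respectively $\phi_\e^{\gamma-r}$) with $\sum_j i_j = l$, and I would estimate these region by region: in the small-$x$ transition $\phi_\e\asymp\e\asymp|v|$ makes each ratio contribute $\e^{-i_j}\asymp|v|^{-i_j}$, combining to the claimed $|v|^{-l}$ and $|v|^{\gamma-l}$; in the linear region $\phi_\e(|v|)=|v|$ the derivatives are exactly those of $\log|v|$ and $|v|^\gamma$; in the upper transition $\phi_\e\asymp\Gamma_\e$ yields the $\Gamma_\e^{-1}$ and $\Gamma_\e^{\gamma-1}$ contributions. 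The main technical point is the bookkeeping in the upper transition, where a term of order $\e^{1-l}/\Gamma_\e$ coming from the single-factor contribution $\phi_\e^{(l)}/\phi_\e$ must either be absorbed by the announced bound or, as suffices for the applications in Proposition \ref{majoder}, by the weaker estimate $\e^{-l}$; the final step, passing from derivatives in $x=|v|$ to partial derivatives in $v\in\rr^2$, only introduces additional factors of $|v|^{-1}$ from differentiating $v_i/|v|$, which are harmless on the support $\{|v|>\e\}$ of the estimates.
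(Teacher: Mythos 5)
Your proof of (i) is correct but routes differently from the paper. You split on $|x-y|\gtrless x/2$, use the mean value theorem with the lower bound $\phi_\e(\xi)\geq c\min(x,\Gamma_\e)$, and finish by a regime analysis in $x$. The paper instead avoids the MVT entirely and uses the two-sided algebraic inequality $c_{a,b}|x^{a+b}-y^{a+b}|\leq(x^a+y^a)|x^b-y^b|\leq C_{a,b}|x^{a+b}-y^{a+b}|$ to telescope $\phi_\e^\beta(x)|\phi_\e^\gamma(x)-\phi_\e^\gamma(y)|$ down to $\Gamma_\e^\gamma|\phi_\e(x)-\phi_\e(y)|^\beta$, then applies the Lipschitz bound; the case distinction is only on whether $x,y$ lie above or below $\Gamma_\e+1$. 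Both arguments are valid; yours is arguably more elementary, the paper's is shorter once the algebraic inequality is recalled.

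For (ii) your structure coincides with the paper's (derivative bounds for $\phi_\e$, then Faa di Bruno, then the chain rule through $v\mapsto|v|$). The technical point you flag is real and worth spelling out: since the transition near $\Gamma_\e$ is also a mollification at scale $\e$, one has $\phi_\e^{(k)}(x)=\chi_\e^{(k-2)}(x-2\e)-\chi_\e^{(k-2)}(x-\Gamma_\e)$ for $k\geq2$, so $|\phi_\e^{(k)}|\leq C_k\e^{1-k}$ on \emph{both} transition intervals $(\e,3\e)$ and $(\Gamma_\e-\e,\Gamma_\e+\e)$. The paper's intermediate claim that $|\phi_\e^{(k)}(x)|\leq C_k$ for $x\in(\Gamma_\e-1,\Gamma_\e+1)$ therefore does not hold for $k\geq2$, and the term $\phi_\e^{(l)}/\phi_\e\asymp\e^{1-l}/\Gamma_\e$ in the Faa di Bruno expansion of $[\log\phi_\e]^{(l)}$ is \emph{not} dominated by the stated $\Gamma_\e^{-1}$ once $l\geq2$ (and likewise $\Gamma_\e^{\gamma-1}$ cannot absorb $\e^{1-l}\Gamma_\e^{\gamma-1}$). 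So the second indicator in the statement should carry $\e^{1-l}\Gamma_\e^{-1}$ (resp. $\e^{1-l}\Gamma_\e^{\gamma-1}$) rather than $\Gamma_\e^{-1}$ (resp. $\Gamma_\e^{\gamma-1}$). As you observe, this does not affect the downstream argument: the proof of Proposition \ref{majoder} only invokes the consolidated bounds $\overline{(\log h)}^{l}_\e\leq C_l\e^{-l}$ and $\overline{(\log g)}^{l}_\e\leq C_l\e^{\gamma-l}$, and the corrected upper-transition term $\e^{1-l}\Gamma_\e^{-1}\leq\e^{-l}$ (resp. $\e^{1-l}\Gamma_\e^{\gamma-1}\leq\e^{\gamma-l}$) is comfortably absorbed there. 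I would only ask you to remove the hedge ``must either be absorbed by the announced bound'' — it is not absorbed for $l\geq2$ — and state flatly that the corrected bound in the upper transition is $C_l\e^{1-l}\Gamma_\e^{-1}$, which suffices.
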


\begin{proof} We first prove (i).
We recall that for any $a,b>0$, there are some
constants $0<c_{a,b}<C_{a,b}$ such that for any 
$x,y\geq 0$, 
$c_{a,b}|x^{a+b}-y^{a+b}|\leq (x^a+y^a)|x^b-y^b|\leq C_{a,b}|x^{a+b}-y^{a+b}|$.
We also recall that $\phi_\e$ is globally Lipschitz continuous
with constant $1$, that $\phi_\e(x)=\Gamma_\e$ for $x\geq \Gamma_\e+1$
and that $\phi_\e(x)\geq x/2$ for $x\in [0,\Gamma_\e+1]$,
since $\phi_\e(x)\geq x$ for $x\in [0,\Gamma_\e-1]$ and since
$\phi_\e$ is non-decreasing. We set $\Delta_\e(x,y)=x^\beta|\phi_\e^\gamma(x)
-\phi_\e^\gamma(y)|$. If $x,y\geq \Gamma_\e+1$, then $\Delta_\e(x,y)=0$.
If now $x\leq \Gamma_\e+1$, then
\begin{align*}
\Delta_\e(x,y)\leq& 2^\beta\phi_\e^\beta(x)|\phi_\e^\gamma(x)-\phi_\e^\gamma(y)|\\
\leq&2^\beta(\phi_\e^\beta(x)+\phi_\e^\beta(y))
|\phi_\e^\gamma(x)-\phi_\e^\gamma(y)| \\
\leq & 2^\beta C_{\beta,\gamma} 
|\phi_\e^{\beta+\gamma}(x)-\phi_\e^{\beta+\gamma}(y)| \\
\leq & 2^\beta \frac{C_{\beta,\gamma} }{c_{\gamma,\beta}}
(\phi_\e^\gamma(x)+\phi_\e^\gamma(y)) |\phi_\e^\beta(x)-\phi_\e^\beta(y)|\\
\leq & 2^{\beta+\gamma} \frac{C_{\beta,\gamma} }{c_{\gamma,\beta}} \Gamma_\e^\gamma
|\phi_\e(x)-\phi_\e(y)|^\beta \\
\leq & 2^{\beta+\gamma} \frac{C_{\beta,\gamma} }{c_{\gamma,\beta}} \Gamma_\e^\gamma
|x-y|^\beta.
\end{align*}
We used here that $\beta <1$. Finally, if $x\geq \Gamma_\e+1$ and 
$y \leq \Gamma_\e+1$,
\begin{align*}
\Delta_\e(x,y) =& x^\beta |\Gamma_\e^\gamma-\phi_\e^\gamma(y)|\\
\leq& ( |x-y|^\beta + |y|^\beta) (\Gamma_\e^\gamma-\phi_\e^\gamma(y))\\
\leq& |x-y|^\beta \Gamma_\e^\gamma + |y|^\beta |\phi_\e^\gamma
(x)-\phi_\e^\gamma(y)|\\
\leq & |x-y|^\beta \Gamma_\e^\gamma +  
2^{\beta+\gamma} \frac{C_{\beta,\gamma} }{c_{\gamma,\beta}} \Gamma_\e^\gamma
|x-y|^\beta,
\end{align*}
the last inequality being obtained as previously, since $y\leq \Gamma_\e+1$.

To prove (ii), we first observe that for $k\geq 1$,
$$
|\phi_\e^{(k)}(x)|\leq C_k \left(\e^{1-k}\indiq_{\{x\in(\e,3\e)\}} 
+\indiq_{\{k=1\}}\indiq_{\{x\in [3\e,\Gamma_\e-1]\}} 
+ \indiq_{\{x \in (\Gamma_\e-1,\Gamma_\e+1)\}}
\right).
$$
Using the Faa di Bruno formula (\ref{fdb}),
one easily deduces that for $l\geq 1$,
$$
|[\log \phi_\e(x)]^{(l)}|\leq C_l \left(\indiq_{\{x\in(\e,\Gamma_\e]\}}x^{-l} 
+ \indiq_{\{x \in (\Gamma_\e-1,\Gamma_\e+1)\}}\Gamma_\e^{-1}
\right)
$$
and
$$
|[\phi_\e^\gamma(x)]^{(l)}|\leq C_l \left(\indiq_{\{x\in(\e,\Gamma_\e]\}}
x^{\gamma-l} + \indiq_{\{x \in (\Gamma_\e-1,\Gamma_\e+1)\}}\Gamma_\e^{\gamma-1}
\right).
$$
Using again (\ref{fdb}) and that any derivative
of order $k\geq 1$ of $v\mapsto |v|$ is smaller than $C_k|v|^{1-k}$,
one easily concludes.
\end{proof}

\subsection*{Exponential estimates}

The next result deals with some estimates concerning the exponential
moments for the linearized Boltzmann equation. The study of
exponential moments for the nonlinear Boltzmann equation was
initiated by Bobylev \cite{b}, see also \cite{fm} and the references therein.
These results really use the nonlinear structure of the Boltzmann equation
and we can unfortunately not use them.

\begin{lem}\label{mp}
For any $\kappa\in(\nu,1)$, 
any $v,V \in \rr^2$, for some constants $C>0$, $c_\kappa>0$, $C_\kappa>0$,
\begin{align*}
&\int_{-\pi/2}^{\pi/2} \left(e^{|V+A(\theta)(V-v)|^\kappa} - e^{|V|^\kappa}
\right) b(\theta)d\theta \leq e^{|V|^\kappa}
\left[-c_\kappa \indiq_{\{|V|\geq 1, |V|\geq C |v|\}}
+C_\kappa (|V|\lor 1)^{\kappa+\nu-2}e^{C_\kappa|v|^\kappa}\right],\\
&\int_{-\pi/2}^{\pi/2} \left|e^{|V+A(\theta)(V-v)|^\kappa} - e^{|V|^\kappa}
\right| b(\theta)d\theta \leq C_\kappa e^{C_\kappa|v|^\kappa}
e^{C_\kappa |V|^\kappa}.
\end{align*}
\end{lem}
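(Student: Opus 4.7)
\textbf{Proof proposal for Lemma \ref{mp}.} The plan is to split the $\theta$-integration into a ``large $\theta$'' regime $\pi/4 \leq |\theta| \leq \pi/2$ and a ``small $\theta$'' regime $|\theta| \leq \pi/4$, and estimate each separately. Throughout, I use the identity (easily checked by direct computation, with $J$ the $\pi/2$-rotation)
\begin{equation*}
|V+A(\theta)(V-v)|^2-|V|^2 \;=\; \tfrac{1}{2}(\cos\theta-1)(|V|^2-|v|^2)-\sin\theta\,\langle V,Jv\rangle,
\end{equation*}
which encapsulates the energy redistribution in a collision.

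For the large-$\theta$ piece, take $C$ so large that $|V|\geq C|v|$ forces $|V|^2-|v|^2\geq \tfrac{15}{16}|V|^2$ and the linear term $|\sin\theta\langle V,Jv\rangle|\leq |V||v|$ is negligible compared to the quadratic one. Since $1-\cos\theta \geq c_0 > 0$ uniformly on $[\pi/4,\pi/2]$, one deduces $|V'|^2 \leq (1-c_1)|V|^2$ and hence $|V'|^\kappa \leq (1-c_2)|V|^\kappa$. Then whenever $|V|\geq 1$,
\begin{equation*}
e^{|V'|^\kappa}-e^{|V|^\kappa}\;\leq\;e^{|V|^\kappa}\bigl(e^{-c_2|V|^\kappa}-1\bigr)\;\leq\;-c_\kappa\,e^{|V|^\kappa}.
\end{equation*}
Since $\int_{\pi/4\leq|\theta|\leq\pi/2}b(\theta)d\theta\in(0,\infty)$, this yields the desired negative contribution $-c_\kappa e^{|V|^\kappa}\indiq_{\{|V|\geq 1,|V|\geq C|v|\}}$.

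For the small-$\theta$ piece, write $g(\theta)=e^{|V+A(\theta)(V-v)|^\kappa}$ and exploit the evenness of $b$ by using the symmetrized identity
\begin{equation*}
g(\theta)+g(-\theta)-2g(0)\;=\;\int_{-\theta}^\theta g''(u)(\theta-|u|)\,du,
\end{equation*}
so that the first-order (in $\theta$) contribution drops out. A direct but careful computation using $|A'(\theta)|,|A''(\theta)|\leq C$ and $\langle V_u,A'(u)(V-v)\rangle \approx -\tfrac12\langle V,Jv\rangle - \tfrac{u}{2}(|V|^2-V\cdot v)$ gives, for $|V|\geq |v|\vee 1$ and $|u|\leq\pi/4$,
\begin{equation*}
|g''(u)|\;\leq\;C\,g(u)\bigl[|V|^{2\kappa-2}|v|^2+u^2|V|^{2\kappa}+|V|^\kappa\bigr].
\end{equation*}
I then partition $[-\pi/4,\pi/4]$ into the sub-interval $|\theta|\leq 1/|V|$ (on which $g(u)\leq Ce^{|V|^\kappa}$ because $|V_u-V|\leq C$) and its complement. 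On the inner part, using $\int_0^{1/|V|}\theta^{1-\nu}d\theta \sim |V|^{\nu-2}$, the three terms in the brace yield contributions of size $|V|^{2\kappa-2}|v|^2\cdot |V|^{\nu-2}$, $|V|^{2\kappa}\cdot|V|^{-(4-\nu)}$ and $|V|^\kappa\cdot|V|^{\nu-2}$ respectively, each absorbed in $C(|V|\vee 1)^{\kappa+\nu-2}e^{|V|^\kappa}e^{C|v|^\kappa}$ (for the $|v|^2$ factor use $|v|^2\leq e^{C|v|^\kappa}$). On the outer part $1/|V|\leq|\theta|\leq\pi/4$, the calculation leading to the large-$\theta$ bound still applies once $|\theta|\gtrsim |v|/|V|$, showing $|V'|^2\leq |V|^2$ so that $e^{|V'|^\kappa}-e^{|V|^\kappa}\leq 0$ there; the residual shell $1/|V|\leq|\theta|\leq C|v|/|V|$ has width controlled by a power of $|v|/|V|$ and is likewise absorbed in the stated bound.

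The second (absolute-value) inequality is easier: on $|\theta|\leq\pi/4$ one bounds $|g(\theta)-g(0)|\leq |\theta|\sup_{|s|\leq\pi/4}|g'(s)|\leq C|\theta|e^{C(|V|+|v|)^\kappa}$ and uses $\int_0^{\pi/4}|\theta|b(\theta)d\theta<\infty$; on $|\theta|\geq \pi/4$ one uses $|V'|\leq \tfrac32|V|+\tfrac12|v|$ to bound $g(\theta)\leq e^{C(|V|^\kappa+|v|^\kappa)}$ and $\int_{\pi/4}^{\pi/2}b<\infty$. The main technical obstacle is the first bound: one must carefully track how the Hessian estimate on $g$ combines with the scale $|\theta|\sim 1/|V|$ to produce exactly the exponent $\kappa+\nu-2$ (rather than the looser $2\kappa+\nu-2$ that a naive estimate yields), and this is where the algebraic structure of $V\cdot A'(\theta)(V-v)$ — with cancellations at $\theta=0$ — is essential.
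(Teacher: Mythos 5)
The approach you take diverges from the paper's in one structural respect — the small-$\theta$ analysis — and contains one outright error plus a few untreated cases.

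\textbf{Where the route differs.} Your large-$\theta$ piece and the resulting negative term are essentially the paper's $\Delta_1$ (the paper uses $|\theta|\geq 1$ and $|V|\geq 130|v|$ but the mechanism is identical). For small $\theta$ the paper does \emph{not} symmetrize and does \emph{not} compute a Hessian: it uses the exact algebraic bound $|V+A(\theta)(V-v)|^2\leq |V|^2+\theta^2|v|^2+4|\theta||V||v|$ (valid when $|V|\leq 130|v|/|\theta|$), then the elementary inequality $e^{(x+y)^{\kappa/2}}-e^{x^{\kappa/2}}\leq \tfrac{\kappa}{2}yx^{\kappa/2-1}e^{x^{\kappa/2}}e^{y^{\kappa/2}}$, and finally integrates $b(\theta)d\theta$ only over the short interval $[0,\min(\pi/2,130|v|/|V|)]$. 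The exponent $\kappa+\nu-2$ comes from two things the paper exploits directly: (a) the cancellation $\langle V,JV\rangle=0$ makes the $O(\theta)$ term in $|V_\theta|^2-|V|^2$ of size $|\theta||V||v|$ rather than $|\theta||V|^2$, and (b) the $\theta$-integration is confined to the interval of length $\sim|v|/|V|$. No second-order Taylor or evenness cancellation is needed, and in fact your claim that a first-order estimate gives only $2\kappa+\nu-2$ is not accurate: the paper's bound is first-order in nature and still lands on $\kappa+\nu-2$. Your symmetrization is therefore unnecessary extra machinery, although it is not wrong in principle.

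\textbf{The real gap.} Your treatment of the second (absolute-value) inequality is incorrect. You write $|g(\theta)-g(0)|\leq|\theta|\sup_{|s|\leq\pi/4}|g'(s)|$ with $g(s)=e^{|V_s|^\kappa}$ and claim $\sup|g'|\leq Ce^{C(|V|+|v|)^\kappa}$. But $g'(s)=\kappa|V_s|^{\kappa-2}\langle V_s,V_s'\rangle e^{|V_s|^\kappa}$, so $|g'(s)|\sim |V_s|^{\kappa-1}$ near a zero of $V_s$, and this is unbounded since $\kappa<1$. Worse, $x\mapsto|x|^\kappa$ has a cusp at $0$, so $g$ is not even $C^1$ wherever $V_s$ vanishes; such $s$ exist in $[-\pi/4,\pi/4]$ for suitable $(V,v)$ (take $V=(I+A(s_0))^{-1}A(s_0)v$). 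The mean-value theorem simply does not apply. The correct device — and the one the paper uses — is the Hölder subadditivity $|x^\kappa-y^\kappa|\leq|x-y|^\kappa$, giving $|e^{|V_\theta|^\kappa}-e^{|V|^\kappa}|\leq|A(\theta)(V-v)|^\kappa e^{(|V_\theta|\vee|V|)^\kappa}\leq C_\kappa|\theta|^\kappa e^{C_\kappa(|V|^\kappa+|v|^\kappa)}$, integrable against $b$ precisely because $\kappa>\nu$. This is also where the hypothesis $\kappa>\nu$ visibly enters, which your argument hides.

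\textbf{Missing cases.} You prove the key Hessian estimate only for $|V|\geq|v|\vee1$, but the lemma must hold for all $V,v$. The regime $|V|\leq 1$ (the paper's $\Delta_3$) and the regime $1\leq|V|<|v|$ (where the paper invokes $\min(1,(|v|/|V|)^{2-\nu})$) both need to be treated: in the first, the integrand is bounded using the same Hölder trick, and in the second, the $(|V|\vee1)^{\kappa+\nu-2}$ factor does not help but $e^{C_\kappa|v|^\kappa}$ absorbs the loss. Neither appears in your proposal. The ``residual shell'' $1/|V|\leq|\theta|\leq C|v|/|V|$ is also asserted rather than estimated; the paper handles exactly that range by the short-interval integration of $\theta^2 b(\theta)$ and $|\theta|b(\theta)$, which you would need to reproduce to justify the claim.
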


\begin{proof} We start with the first inequality.
Recall that by (\ref{tropcool}), $|A(\theta)V|^2=\frac{1+\cos\theta}{2}|V|^2$. 
We also 
have $\lc V, A(\theta)V\rc = -\frac{1-\cos\theta}{2}|V|^2$, $|A(\theta)|\leq
|\theta|$ and $\theta^2/4 \leq 1-\cos\theta \leq \theta^2$ for
$\theta\in [-\pi/2,\pi/2]$. Thus
\begin{align*}
|V+A(\theta)(V-v)|^2=& |V|^2+\frac{1-\cos\theta}{2}(|V|^2+|v|^2-2\lc V,v\rc)+
2\lc V,A(\theta)V\rc -2\lc V,A(\theta)v\rc  \ala
=& \frac{1+\cos\theta}{2}|V|^2 + \frac{1-\cos\theta}{2}(|v|^2-2\lc V,v\rc)
-2\lc V,A(\theta)v\rc \ala
\leq& |V|^2(1 -\theta^2/8) + \theta^2|v|^2+4|\theta||V||v|.
\end{align*}
An simple computation shows that
\begin{equation*}
|V+A(\theta)(V-v)|^2 \leq \left\{ \begin{array}{lll}
|V|^2(1-\theta^2/16) & \hbox{ if }& |V|\geq 130 |v|/|\theta| \\
|V|^2 + \theta^2|v|^2+4|\theta||V||v| & \hbox{ if }& |V|\leq 130 |v|/|\theta|
\end{array}\right\}.
\end{equation*}
In the case where $|V|\leq 1$, we observe that, since $\kappa\in (0,1)$,
\begin{align*}
|V+A(\theta)(V-v)|^{\kappa}\leq& (|V|+|\theta|(|V|+|v|))^\kappa
\leq |V|^\kappa+|\theta|^\kappa(1+|v|^\kappa).
\end{align*}
We thus may write
\begin{align*}
\Delta(V,v):=&\int_{-\pi/2}^{\pi/2} \left(e^{|V+A(\theta)(V-v)|^\kappa} - e^{|V|^\kappa}
\right) b(\theta)d\theta\\
\leq& -
\int_{-\pi/2}^{\pi/2} \left(e^{|V|^\kappa}-e^{|V|^\kappa(1-\theta^2/16)^{\kappa/2}}
\right)\indiq_{\{|\theta|\geq 130|v|/|V|\}} 
b(\theta)d\theta\\
&+ \indiq_{\{|V|\geq 1\}}
\int_{-\pi/2}^{\pi/2} \left(e^{(|V|^2  + \theta^2|v|^2+4|\theta||V||v|)^{\kappa/2}}
-e^{|V|^\kappa}
\right)\indiq_{\{|\theta|\leq 130|v|/|V|\}} b(\theta)d\theta\\
&+ \indiq_{\{|V|\leq 1\}}\int_{-\pi/2}^{\pi/2} 
\left(e^{|V|^\kappa + C_\kappa |\theta| (1+|v|^\kappa)}
-e^{|V|^\kappa}\right) b(\theta)d\theta\\
=:&-\Delta_1(V,v) + \Delta_2(V,v)+\Delta_3(V,v).
\end{align*}
We now compute carefully. First, we have 
\begin{align*}
\Delta_1(V,v)\geq & \indiq_{\{|V|\geq 1,|V|\geq 130|v|\}}
\int_{-\pi/2}^{\pi/2} \left(e^{|V|^\kappa}-e^{|V|^\kappa(1-\theta^2/16)^{\kappa/2}}
\right)\indiq_{\{|\theta|\geq 1\}} 
b(\theta)d\theta.
\end{align*}
But for $|\theta|\geq 1$ and $|V|\geq 1$,
\begin{align*}
e^{|V|^\kappa}-e^{|V|^\kappa(1-\theta^2/16)^{\kappa/2}}\geq&
e^{|V|^\kappa}-e^{|V|^\kappa(1-1/16)^{\kappa/2}} \geq 
e^{|V|^\kappa}(1-e^{-|V|^\kappa(1-(1-1/16)^{\kappa/2})}) \geq c_\kappa e^{|V|^\kappa},
\end{align*}
whence, since $b([1,\pi/2])>0$ by assumption,
\begin{align*}
\Delta_1(V,v) \geq c_\kappa \indiq_{\{|V|\geq 1,|V|\geq 130|v|\}} e^{|V|^\kappa}.
\end{align*}
Next we observe that for $x,y\geq 0$, since $\kappa/2 \in (0,1)$,
$e^{(x+y)^{\kappa/2}}-e^{x^{\kappa/2}} 
\leq (\kappa/2)y x^{\kappa/2-1}e^{x^{\kappa/2}}e^{y^{\kappa/2}}$.
As a consequence in $\Delta_2$, since $|\theta||V|\leq 130 |v|$,
\begin{align*}
e^{(|V|^2+ \theta^2|v|^2+4|\theta||V||v| )^{\kappa/2}} - e^{|V|^\kappa} \leq& C_\kappa 
(\theta^2|v|^2+|\theta||V||v|) |V|^{\kappa-2}
e^{|V|^\kappa}e^{C_\kappa (\theta^2|v|^2+|\theta||V||v|)^{\kappa/2}}\\
\leq&C_\kappa 
(\theta^2|v|^2+|\theta||V||v|) |V|^{\kappa-2}
e^{|V|^\kappa}e^{C_\kappa |v|^{\kappa}}.
\end{align*}
Integrating this formula against $b(\theta)d\theta$ 
(on $|\theta| \in [0,\min(\pi/2,130 |v|/|V|)]$) and using 
({\bf A}$(\gamma,\nu))$ yields
\begin{align*}
\Delta_2(V,v) \leq& C_\kappa \indiq_{\{|V|\geq 1\}} 
|V|^{\kappa-2} e^{|V|^\kappa}e^{C_\kappa |v|^{\kappa}}
\left[|v|^2 \min(1,(|v|/|V|)^{2-\nu})+ |V||v| \min(1,(|v|/|V|)^{1-\nu}) \right]\\
\leq& C_\kappa \indiq_{\{|v|\geq |V|\geq 1\}} e^{|V|^\kappa}e^{C_\kappa |v|^{\kappa}}
|V|^{\kappa-2}|v|^2 \\
&+ C_\kappa \indiq_{\{|V|\geq 1,|V|\geq |v|\}} e^{|V|^\kappa}e^{C_\kappa |v|^{\kappa}}
(|v|^{4-\nu}|V|^{\kappa+\nu-4} + |v|^{2-\nu}|V|^{\kappa+\nu-2})\\
\leq & C_\kappa \indiq_{\{|V|\geq 1\}} |V|^{\kappa+\nu-2}
e^{|V|^\kappa}e^{C_\kappa |v|^{\kappa}}.
\end{align*}
We finally used that $\kappa+\nu-4\leq \kappa-2 \leq \kappa+\nu -2<0$.
Recall now that for $x\geq 0$, $e^x-1\leq x e^x$, so that in 
$\Delta_3$, since $|V|\leq 1$, 
$$
e^{|V|^\kappa + |\theta|^\kappa (1+|v|^\kappa)}-e^{|V|^\kappa}  =
e^{|V|^\kappa}(e^{|\theta|^\kappa (1+|v|^\kappa)} -1)  
\leq C_\kappa |\theta|^\kappa e^{C_\kappa |v|^\kappa}.
$$
Thus, using ({\bf A}$(\gamma,\nu))$ and that $\kappa>\nu$,
\begin{align*}
\Delta_3(V,v)\leq C_\kappa \indiq_{\{|V|\leq 1\}} e^{C_\kappa|v|^\kappa}
\int_{-\pi/2}^{\pi/2}|\theta|^\kappa  b(\theta)d\theta
\leq  C_\kappa \indiq_{\{|V|\leq 1\}} e^{C_\kappa|v|^\kappa}.
\end{align*}
We have proved that
\begin{equation*}
\Delta(V,v) \leq -c_\kappa e^{|V|^\kappa}\indiq_{\{|V|\geq 1, |V|\geq 130|v|\}}
+C_\kappa \indiq_{\{|V|\geq 1\}} |V|^{\kappa+\nu-2}
e^{|V|^\kappa}e^{C_\kappa |v|^{\kappa}}+ C_\kappa \indiq_{\{|V|\leq 1\}} e^{C_\kappa|v|^\kappa},
\end{equation*}
which ends the proof of the first inequality.

The second inequality is much easier. Since $\kappa \in (0,1)$, 
we have for all $x,y\geq 0$,
$$
|e^{x^\kappa}-e^{y^\kappa}|\leq \kappa|x^\kappa-y^\kappa| e^{(x\lor y)^\kappa}
\leq |x-y|^\kappa e^{(x\lor y)^\kappa}.
$$
Thus, since $|A(\theta)|\leq  |\theta|\leq \pi/2$,
\begin{align*}
\left|e^{|V+A(\theta)(V-v)|^\kappa} - e^{|V|^\kappa}
\right| \leq& |\theta|^\kappa(|V|+|v|)^\kappa
e^{(|V|+2|\theta|(|V|+|v|))^{\kappa}}
\leq  C_\kappa |\theta|^\kappa e^{C_\kappa|V|^\kappa}e^{C_\kappa|v|^\kappa}.
\end{align*}
Since $\int_{-\pi/2}^{\pi/2}|\theta|^\kappa
b(\theta)d\theta<\infty$ by ({\bf A}$(\gamma,\nu))$, 
the second inequality holds true.
\end{proof}

\end{document}